\documentclass[reqno]{amsart}
\oddsidemargin9mm
\evensidemargin9mm 
\textwidth14.6cm 
\parskip2mm

\usepackage[T1]{fontenc}
\usepackage[latin1]{inputenc}
\usepackage[english]{babel}
\usepackage[babel]{csquotes}

\usepackage{cite}
\usepackage{amssymb}
\usepackage{amsmath}
\usepackage{amsthm}
\usepackage{latexsym}
\usepackage{graphicx}
\usepackage{mathrsfs}
\usepackage{bbm}
\usepackage{verbatim}
\usepackage[colorlinks=true, pdfstartview=FitV, linkcolor=blue, citecolor=blue, urlcolor=blue]{hyperref}



\usepackage[usenames,dvipsnames]{color}


\newtheorem{thm}{Theorem}[section]

\newtheorem{lem}[thm]{Lemma}

\newtheorem{remark}[thm]{Remark}

\numberwithin{equation}{section}


\def\enne{\mathbb{N}}
\def\zeta{\mathbb{Z}}

\def\erre{\mathbb{R}}
\def\R{\mathbb{R}}

\def\P{\mathbb{P}}

\def\H{\mathcal H}
\def\E{\mathop{{}\mathbb{E}}}
\def\cH{\mathcal{H}}
\def\cL{\mathscr{L}}
\def\cF{\mathscr{F}}
\def\cB{\mathscr{B}}
\def\eps{\varepsilon}
\def\cP{\mathscr{P}}

\def\OO{\mathcal{O}}
\renewcommand{\d}{{\mathrm d}}
\renewcommand{\div}{\operatorname{div}}

\def\beq{\begin{equation}}
\def\eeq{\end{equation}}

\def\to{\rightarrow}
\def\wto{\rightharpoonup}
\def\wstarto{\stackrel{*}{\rightharpoonup}}
\def\embed{\hookrightarrow}

\def\norm #1{\left\|#1\right\|}

\def\sp #1#2{\left<#1,#2\right>}
\newcommand\ip\sp

\newenvironment{system} 
{\left\lbrace\begin{array}{@{}l@{}}}
{\end{array}\right.}


\begin{document}
\title[Separation for the stochastic Allen-Cahn equation]
{Random separation property\\ for
stochastic Allen-Cahn-type equations}

\author{Federico Bertacco}
\address[Federico Bertacco]{Department of Mathematics, Imperial College London, 
London SW7 2AZ, United Kingdom}
\email{f.bertacco20@imperial.ac.uk}

\author{Carlo Orrieri}
\address[Carlo Orrieri]{Department of Mathematics, 
Universit\`a di Pavia, Via Ferrata 1, 27100 Pavia, Italy}
\email{carlo.orrieri@unipv.it}
\urladdr{http://www-dimat.unipv.it/orrieri}

\author{Luca Scarpa}
\address[Luca Scarpa]{Department of Mathematics, Politecnico di Milano, 
Via E.~Bonardi 9, 20133 Milano, Italy.}
\email{luca.scarpa@polimi.it}
\urladdr{http://www.mat.univie.ac.at/$\sim$scarpa}

\subjclass[2010]{35K10, 35K55, 35K67, 60H15}

\keywords{Stochastic Allen-Cahn equation, random separation property, logarithmic potential, exponential estimates.}   

\begin{abstract}
We study a large class of stochastic $p$-Laplace Allen-Cahn equations
with singular potential.
Under suitable assumptions on the (multiplicative-type) noise we first prove existence, uniqueness, and regularity of variational solutions.
Then, we show that a random separation property holds, i.e.~almost 
every trajectory is strictly separated in space and time from the potential barriers.
The threshold of separation is random, and 
we further provide
exponential estimates on the probability of separation from the barriers.
Eventually, we exhibit a convergence-in-probability 
result for the random separation threshold 
towards the deterministic one, as the noise vanishes, and 
we obtain an estimate of the convergence rate. 
\end{abstract}

\maketitle
\setcounter{tocdepth}{1}
\tableofcontents


\section{Introduction}
\setcounter{equation}{0}
\label{sec:intro}

In this paper we study a class of stochastically perturbed Allen-Cahn-type equations with a particular emphasis on the separation property of their solutions from the potential barriers. To motivate our interest let us firstly spend some words on the (by now classical)
deterministic models and on some of the problems arising in their stochastic counterpart. 

\textit{Deterministic setting.} Allen-Cahn equations are particular instances of the broad class of phase field models and are suitable to describe i.e.~the evolution of the normalized density $u$ of (one of) the phases involved in a phase separation process.  
The phase-field variable $u$ is supposed to 
take values in $[-1,1]$, with $\{u=1\}$ and $\{u=-1\}$ representing the two 
so-called pure-regions and $\{-1<u<1\}$ standing for the narrow 
diffuse interface between them.
Given a bounded domain $\OO \subset \R^d$ and a time horizon $T >0$, 
the classical (simplified) formulation of the Allen-Cahn equation is 
$$\partial_t u - \Delta u + u^3 - u = 0, \qquad \text{ in } (0,T) \times \OO\,,$$
under suitable boundary and initial conditions.
This can be derived in a variational fashion as the $L^2$-gradient flow of the Ginzburg-Landau free energy
\begin{equation*}
\mathcal{E}_{2,\operatorname{pol}}(u):=\int_\OO 
\left(\frac{1}{2}|\nabla u|^2 + \frac{1}{4}(u^2-1)^2\right)\,.
\end{equation*} 
It is evident from the form of the free-energy functional that 
the Allen-Cahn dynamics result from the interplay of two factors:
on the one hand, the tendency of each phase to concentrate 
at the pure phases (the two global minima of the potential),
and on the other hand the penalization of the
space oscillations of the phase-field variable.

In the last decades, numerous 
generalizations of Allen-Cahn models have been proposed in literature. 
At a formal level, free energies and corresponding gradient flows can
be written in the very broad form
\begin{equation*}
\mathcal{E}_{\varphi,\operatorname{F}}(u):= 
\int_\OO\left( \varphi(\nabla u) + F(u)\right)\,, \qquad \partial_t u  + \partial \Phi(u) + F'(u) = 0\,, \quad \text{ with } \Phi(u) := \int_\OO \varphi(\nabla u)
\end{equation*} 
where the choice of the gradient and potential terms depends on the particular physical model under consideration. As far as the choice of $F$ is concerned, 
notice that the polynomial potential in $\mathcal E_{2,\operatorname{pol}}$,
despite being relatively easy to handle from the mathematical point of view,
is totally ineffective in the modelling construction. Indeed, 
minimizers $u$ of the polynomial energy $\mathcal E_{2,\operatorname{pol}}$
do not satisfy the physically relevant constraint $u\in[-1,1]$
that one would expect from the very definition of the relative concentration $u$.
In this direction, 
the most relevant choices of the double-well potential $F$ are 
the so-called singular potentials instead, i.e.~defined on $[-1,1]$ only.
The typical double-well potential 
which is classically proposed in thermodynamics is the 
the so-called \textit{logarithmic potential} 
$$F_{\log}(r):=\frac{\theta}{2}((1+r)\ln(1+r)+(1-r)\ln(1-r))-\frac{\theta_0}{2}r^2,$$ 
with $r \in [-1,1]$ and $0 < \theta < \theta_0$ being given constants. 
In this case, the two minima are strictly contained inside the physical relevant domain and the derivative of $F_{\log}$ blows up at $\pm 1$, thus forcing the order parameter $u$ to take values in $[-1,1]$. 
For what concerns the gradient part, a natural candidate is the  generalization of the Dirichlet energy  given by
$$\Phi_p(u) = \frac{1}{p}\int_\OO|\nabla u|^p \,, \qquad p \in [1,+\infty).$$ 
At the level of the flow, the above energy produces a  (nonlinear) $p$-Laplace operator $\Delta_p u = \div \left(|\nabla u|^{p-2} \nabla u \right)$, which reduces to the classical linear diffusion with the choice $p =2$.

Under the physically relevant choice of the logarithmic potential $F_{\log}$, 
the solution $u$ of the corresponding deterministic gradient flow
takes values in the natural range $[-1,1]$. However, 
it is possible to say something more. A qualitative analysis of the equation 
shows that under $F_{\log}$ solutions to the Allen-Cahn equation satisfy 
a separation principle, meaning that is the initial concentration $u_0$
is strictly separated from the pure phase $\pm1$ (i.e.~one starts with an 
actual mixture) then the solution $u$ remains strictly separated from $\pm1$
at all times everywhere in $\OO$. More precisely, this can be formulated as
\[
  \sup_{x\in\overline\OO}|u_0(x)| < 1 \qquad\Rightarrow\qquad
  \sup_{(t,x)\in[0,T]\times\overline\OO}|u(t,x)| < 1\,.
\]
The strict inequality above is the essence of the separation property, as it
means that $u$ cannot accumulate towards the 
barriers $\pm1$ of the singular potential and uniquely defines 
a positive separation layer.
Not only is this extremely 
natural since $F_{\log}$ has minima inside the interval $(-1,1)$, 
but it also has important implications in the underlying thermodynamical 
derivation of the model.

The literature on deterministic Allen-Cahn, Cahn-Hilliard, 
and general phase field systems is extremely vast. Here we just mention a few contributions dealing with confining potential or nonlinear diffusions: 
\cite{cal-colli, colli-sprek-optACDBC, fol-stra, mir-sao-tal}.
Results specifically concerning the separation property from the barriers 
in the case of confining potential are studied in several contexts e.g.~in
\cite{bcst2, GGG, GGM, GGW, lon-pez, mir-sao-tal}.

\textit{Stochastic setting.}
The stochastic counterpart of Allen-Cahn-type equations reads as
\[  \d u +\partial \Phi (u)\,\d t + F'(u)\,\d t = \mathcal H(u)\,\d W, \]
where the operator $\mathcal H$ is introduced to suitably inject the random (generally Gaussian) perturbation $W$ into the physical domain. This class of stochastic equations has not been fully investigated yet. 
Even in the presence of a linear diffusion $\partial \Phi (u) = -\Delta u$, a general well-posedness theory for stochastic Allen-Cahn equation with confining potentials (e.g. $F(u) = F_{\log}(u))$ and additive noise is not yet available.
Let us only mention that significant results have been obtained for potentials with polynomial growth, see e.g. \cite{gess} and the references therein, or even without growth conditions as in \cite{mar-scar-diss}, but still defined on the whole real line;  see also \cite{liu, gess-tolle} where $p$-Laplace operators are considered, and 
\cite{orr-scar} for the case of dynamic boundary conditions.
\\
It is thus interesting to investigate whether it is possible, and under which assumptions on the noise, to restore well-posedness of the system in the presence of a confining potential.
In this direction, the only strategy that we are aware of is presented in \cite{bertacco} (see also \cite{bauz-bon-leb} for a simpler model with one-dimensional noise), where suitable condition on the multiplicative noise is presented so to ``compensate'' the singular character of the drift term. 
Morally, the idea is to switch off the noise as soon as the solution touch $\pm 1$ in order to confine it inside the physically relevant domain.
This permits to take advantage of the classical theory of Pardoux, Krylov and Rozovskii for a regularized version of the equation and to obtain uniform (in the approximation parameter) estimates to pass to the limit.
For a physical motivation/application of this class of noises we refer to \cite{orr-roc-scar} for what concerns tumour growth models and to \cite{dar-gess-gnann-grun, grun-metzger, scar-mobility} for stochastic thin-film and Cahn-Hilliard equations where a mobility term is introduced together with the singular potential.

\textit{Our contribution.} 
The qualitative study of the separation property for stochastic Allen-Cahn equations
is currently an open problem in literature, despite having 
important implications on the thermodynamical model beneath.
The aim of the this work is to provide a first answer in this direction, 
and the research is twofold: on the one hand we aim at
investigating the separation of the trajectories of the stochastic equation 
from the potential barriers, and on the other hand we want to 
evaluate the effect of the noise on the
random separation layer, compared with the deterministic one,
in terms of the noise intensity. 

We consider stochastic Allen-Cahn equations of the form
\begin{equation}\label{eq:intro}
\begin{aligned}
  \d u - \Delta_p u\,\d t + F'(u)\,\d t = \mathcal H(u)\,\d W \qquad&\text{in } (0,T)\times\OO\,,\\
  u=0 \qquad&\text{in } (0,T)\times\partial\OO\,,\\
  u(0)=u_0 \qquad&\text{in } \OO\,.
\end{aligned}
\end{equation}
where $\Delta_p u = \div \left(|\nabla u|^{p-2} \nabla u \right)$ is the p-Laplacian operator with $p \in [2,+\infty)$, $\OO \subset \R^d$ is a bounded domain with regular boundary and $W$ is a cylindrical Wiener process.
The map $F: (-1,1) \to [0,+\infty)$ is a confining potential whose derivative $F'$ blows up at the boundary of the physical relevant domain and the diffusion term $\H$ is chosen so to compensate its singular character.
In particular our setting fully covers the physically relevant case 
of the logarithmic potential $F_{\log}$ (see Section~\ref{sec:main} for 
details).

In this framework we are able to prove well-posedness of the problem \eqref{eq:intro} along with refined estimates when the initial datum is more regular, see Theorems \ref{thm1} and \ref{thm1+} for precise statements.
To prove existence of a solution, we adapt the strategy presented in \cite{bertacco} to the case of nonlinear $p$-Laplace operator. 
From a mathematical point of view, the introduction of $p$-Laplace diffusion with $p >d$
allows to gain for strong solution enough spatial regularity so that a pointwise 
space-time evaluation of the solutions is at hand:
\[
  \norm{u(\omega)}_{L^\infty(Q)}=\sup_{(t,x)\in[0,T]\times\overline\OO}|u(\omega,t,x)|
  \qquad\text{ for }\,\P\text{-a.e.~}\omega\in\Omega\,.
\]

Once well-posedness is settled, natural questions on properties of the solutions near the barriers of the confining potential arise. Which are the main differences with respect to the deterministic setting? How the stochastic separation layer depends on $\omega \in \Omega$? Is it possible to show a convergence result towards the deterministic one?
The purpose of the paper is indeed to address the above questions for	 solutions of \eqref{eq:intro}. The first result we show in this direction is a \textit{pathwise} separation property: when the initial datum is  strictly separated from the barriers, then almost every trajectory remains separated for all times.
Specifically, if  $\|u_0\|_{L^\infty(\OO)} = 1-\delta_0$ with $\delta_0 \in (0,1)$, then for almost every $\omega \in \Omega$, there exists $\delta(\omega)\in(0,1)$ such that 
\[
\sup_{(t,x)\in[0,T]\times\overline\OO}|u(\omega,t,x)|
 \leq 1-\delta(\omega)\,,
 \] 
see Theorem \ref{thm2} for a precise statement. 

In our second main result (see Theorem \ref{thm3}) we precisely quantify the probability of separation in an exponential fashion: there exist $L >0$, $\delta_* \in (0,\delta_0)$ and $\rho >0$ such that 
\[  \P\left(\sup_{(t,x)\in[0,T]\times\overline\OO}
  |u(t,x)|\geq1-\delta\right)\leq  \exp\left(-L\delta^{-\rho}\right) \qquad
  \forall\,\delta\in(0,\delta_*)\,, \]
To obtain the above estimate we combine \textit{boundary} and \textit{interior} $L^{\infty}$-estimates for solutions, which in turn are derived from Bernstein inequalities for suitable stochastic integrals.   
A caveat: to study separation properties we have to slightly strengthen the hypotheses on the diffusion coefficients (w.r.t.~the minimal ones required for well-posedness) assuming that also a certain number of derivatives vanish in $\pm 1$; see Assumption {\bf D} for details and Section \ref{sec:ex} for concrete examples.

We eventually introduce a parametrized family $(u_\varepsilon)_{\varepsilon \geq 0}$ of solutions to 
\[ \d u_\eps - \Delta_p u_\eps\,\d t + F'(u_\eps)\,\d t 
  = \sqrt\eps\mathcal H(u_\eps)\,\d W  \]
and we investigate the convergence in probability, as $\varepsilon \downarrow 0$, of the random separation layer, say $\Lambda_\varepsilon$, toward the deterministic one $\delta_0$.
Specifically, if we assume the initial data to be energetically well-prepared, the deterministic solution (with $\varepsilon = 0$) is separated from the barrier of at least the deterministic threshold $\delta_0 = 1 - \|u_0\|_{L^\infty(\OO)}$.  
Taking this into account, we are able to prove that
\[ \|u_\varepsilon\|_{L^\infty((0,T) \times \OO)} \overset{\P}{\longrightarrow} \|u_0\|_{L^\infty(\OO)}\]  
which in turn is equivalent to the convergence $\Lambda_\varepsilon \overset{\P}{\longrightarrow} \delta_0$.
An exponential upper bound on the velocity of the convergence is also given, we refer to Theorem \ref{thm4} for a precise formulation of the result.

\textit{Structure of the paper.} The paper is organized as follows. 
Section \ref{sec:main} contains the assumptions and the precise formulation of the main results.
In Section \ref{sec:proof1} we prove basic and refined well-posedness of the system via three crucial estimates.
The Separation property for almost all trajectories in then discussed in Section \ref{sec:proof2}. In Section \ref{sec:proof3} we carefully investigate the distribution of the separation layer and exhibit exponential estimates for the probability of separation.
Section \ref{sec:proof4} is devoted to the convergence of the separation layer as the noise vanishes.
Finally, in Section \ref{sec:ex} we provide some examples and application of the results obtained in the paper.


\section{Main results}
\label{sec:main}
In this section we state the precise assumptions on the setting and the data 
of the problem, and we present the main results of the work.

{\bf Setting}. First of all, let $(\Omega,\cF,(\cF_t)_{t\in[0,T]},\P)$ be a filtered probability space
satisfying the usual conditions, where $T>0$ is a fixed time. Let also $W$ be a 
cylindrical Wiener process on a separable Hilbert space $U$, and let us 
fix once and for all a complete orthonormal system $(e_j)_{j\in\enne}$ on $U$.
Secondly, let $\OO\subset\erre^d$ ($d\geq1$) be a bounded domain 
with Lipschitz boundary $\partial\OO$ and outward
normal unit vector $\bf n$.
We define the functional spaces 
\[
  H:=L^2(\OO)\,, \qquad V_p:=W^{1,p}_{0}(\OO)\,, \quad p\geq2\,,
\]
endowed with their usual norms $\norm{\cdot}_H$ 
and $\norm{\cdot}_{V_p}$, respectively. 
In the case $p = +\infty$ we define $W^{1,\infty}_0(\OO):= \lbrace u \in W^{1,\infty}(\OO) : \gamma(u) = 0 \,\text{ a.e. on } \partial \OO \rbrace$ with $\gamma: W^{1,\infty}(\OO) \to L^\infty(\OO)$ the trace operator.  
The Hilbert space $H$
is identified to its dual through the Riesz isomorphism, so that
we have the dense, continuous, and compact inclusions
\[
  V_p \embed H \embed V_p^*\,.
\] 
We define the p-Laplacian operator $-\Delta_p: V_p \to V_p^*$ as follows
\[ \ip{-\Delta_pu}{v}_{V_p^*,V_p}  := \int_\OO |\nabla u|^{p-2}\nabla u \cdot \nabla v\,, 
\qquad u,v \in V_p\,.\]
Notice that $-\Delta_p: V_p \to V_p^*$ is (bounded) monotone and demicontinuous. In particular, if $u_j \to u$ strongly in $V_p$ then $\Delta_pu_j \wto \Delta_pu$ weakly in $V_p^*$.
Moreover, for $p \in [2,+\infty)$ we introduce the functional 
\begin{equation*}
\Phi_{p}:= 
\begin{system}
\frac{1}{p}\int_{\OO} |\nabla u|^p\,, \qquad \,\text{ if } u \in V_p  \\
+ \infty\,, \qquad \quad \qquad \text{ if } u \in H \setminus V_p.  \\
\end{system}
\end{equation*}
It is well konwn that $\Phi_p$ is lower semi-continuous and convex with corresponding subdifferential $\partial \Phi_p$ a realization of the $p$-laplace operator on $\OO$ with Dirichlet boundary condition.  

For every $R>0$, we define the closed ball
\[
  \mathcal B_R:=\{\varphi\in H:\; |\varphi|\leq R \quad\text{a.e.~in } \OO\}\,.
\]
We use the classical notation $Q:=(0,T)\times\OO$ and $Q_t:=(0,t)\times\OO$
for all $t\in(0,T]$.

{\bf Notation}.
For every Banach spaces $E_1$ and $E_2$, the symbol $\cL(E_1,E_2)$
denotes the space of linear continuous operators from $E_1$ to $E_2$;
if $E_1$ and $E_2$ are also Hilbert spaces, the space of Hilbert-Schmidt 
operators from $E_1$ to $E_2$ is denoted by $\cL^2(E_1,E_2)$.
We denote by $\cP$ the progressive sigma algebra on $\Omega\times[0,T]$.
For every $s,r\in[1,+\infty]$ and for every Banach space $E$
we use the classical symbols $L^s(\Omega; E)$ and $L^r(0,T; E)$
to indicate the spaces of strongly measurable Bochner-integrable functions 
on $\Omega$ and $(0,T)$, respectively. 
Moreover, 
for all $s,r\in[1,+\infty)$ we use the special symbol 
$L^s_\cP(\Omega;L^r(0,T; E))$ to specify 
that measurability is intended with respect to $\cP$.
In the case that $s\in(1,+\infty)$,
$r=+\infty$, and $E$ is a separable and reflexive,
we explicitly set 
\[
  L^s_w(\Omega; L^\infty(0,T; E^*)):=
  \left\{v:\Omega\to L^\infty(0,T; E^*) \text{ weakly* meas.}\,:\,
  \E\norm{v}_{L^\infty(0,T; E^*)}^s<\infty
  \right\}\,,
\]
so that by 
\cite[Thm.~8.20.3]{edwards} we have the identification
\[
L^s_w(\Omega; L^\infty(0,T; E^*))=
\left(L^{s/(s-1)}(\Omega; L^1(0,T; E))\right)^*\,.
\]

{\bf Assumption A}. Let $p\in[2,+\infty)$ and set $q:=\frac{p}{p-1}\in(1,2]$. We introduce 
\[
  \hat\gamma_p:\erre^d\to[0,+\infty)\,,\qquad
  \hat\gamma_p(x):=\frac1p|x|^p\,, \quad x\in\erre^d\,.
\] 
Clearly, $\hat\gamma_p$ is convex and continuously differentiable, with 
differential given by
\[
  \gamma_p:\erre^d\to\erre^d\,, \qquad
  \gamma_p(x):=D\hat\gamma_p(x)=|x|^{p-2}x\,,\quad x\in\erre^d\,.
\]

{\bf Assumption B}. Let $F:(-1,1)\to[0,+\infty)$ be of class $C^2$, with $F'(0)=0$, such that 
\[
  \lim_{r\to(-1)^+}F'(r)=-\infty\,, \qquad
  \lim_{r\to1^-}F'(r)=+\infty\,,
\]
and
\[
  \exists\, C_F\geq0:\qquad F''(r)\geq - C_F \quad\forall\,r\in(-1,1)\,.
\]
This implies in particular that the operator
\[
  \beta:(-1,1)\to\erre\,, \qquad \beta(r):=F'(r) + C_Fr\,, \quad r\in(-1,1)\,,
\]
is maximal monotone as a graph in $\erre\times\erre$. Moreover, 
let us define $r_F, R_F\in(-1,1)$ as
\begin{align*}
  r_F&:=\sup\{r\in(-1,1):F'(z)\leq0\quad\forall\,z\in(-1,r)\}\,,\\
  R_F&:=\inf\{r\in(-1,1):F'(z)\geq0\quad\forall\,z\in(r,1)\}\,,
\end{align*}
so that $-1<r_F\leq R_F<1$ and $F'(r_F)=F'(R_F)=0$.

{\bf Assumption C}. Let 
\[
  (h_k)_{k\in\enne}\subset W_0^{1,\infty}(-1,1)\,,
\]
be such that 
\[
F''h_k^2\in L^\infty(-1,1) \qquad\forall\,k\in\enne
\]
and
\[
  C_{\mathcal H}^2:=\sum_{k=0}^\infty
  \left(\norm{h_k}_{W^{1,\infty}(-1,1)}^2
  +\norm{F''h_k^2}_{L^\infty(-1,1)}\right)
  <+\infty\,.
\]
Then, in particular it is well-defined the operator 
\[
  \H:\mathcal B_1\to \cL^2(U,H)\,,\qquad
  \H(v):e_k\mapsto h_k(v)\,, \quad v\in\mathcal B_1\,,\quad k\in\enne\,,
\]
which is $C_\H$-Lipschitz-continuous with respect to the 
metric of $H$ induced on $\mathcal B_1$.

We are now ready to state our main results.

\begin{thm}[Well-posedness]
  \label{thm1}
  Assume {\bf A--B--C}, and let 
  \[
  u_0\in H\,, \qquad F(u_0)\in L^1(\OO)\,.
  \]
  Then, there exists a unique $u$ with
  \begin{align*}
  &u\in L^p_\cP(\Omega; C^0([0,T]; H))\cap L^p_\cP(\Omega; L^p(0,T; V_p))\,, \\
  &\gamma_p(u) \in L^q_\cP(\Omega; L^q(0,T; L^q(\OO)^d))\,,\\
  &F'(u)\in L^2_\cP(\Omega; L^2(0,T; H))\,,
  \end{align*}
  such that for every $v \in V_p$
\begin{equation}\label{variational_sol}
\begin{aligned}
  \int_\OO u(t) v \, + &\int_0^t \int_\OO \gamma_p(\nabla u(s)) \cdot \nabla v\,\d s + \int_0^t \int_\OO  F'(u(s)) v \,\d s \\
  &= 
  \int_\OO u_0 v + \int_\OO \left(\int_0^t  \H(u(s))\,\d W(s) \right) v 
\end{aligned}
\end{equation}
for every $t \in [0,T]$, $\P$-a.s.
  In particular, it holds that 
  \[
  |u|\leq 1 \quad\text{a.e.~in } \Omega\times(0,T)\times\OO\,.
  \]
\end{thm}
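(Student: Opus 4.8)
\emph{Strategy and Step 1 (regularised equation).} The plan is a two-layer approximation: regularise the singular drift so as to enter the classical variational well-posedness theory, then derive estimates uniform in the regularisation parameter and pass to the limit, recovering the constraint $|u|\le1$ in the limit. Extending each $h_k$ by zero outside $(-1,1)$, I would regard $\H$ as a globally $C_\H$-Lipschitz map $H\to\cL^2(U,H)$, which is moreover bounded since $\|\H(v)\|_{\cL^2(U,H)}^2\le|\OO|\,C_\H^2$. Letting $\beta_\lambda$ ($\lambda>0$) be the Yosida approximation of $\beta=F'+C_F\,\mathrm{id}$ and $F_\lambda':=\beta_\lambda-C_F\,\mathrm{id}$ gives a globally Lipschitz nonlinearity with $F_\lambda'(0)=0$, linear growth, and $F_\lambda'\to F'$ pointwise on $(-1,1)$. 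In the Gelfand triple $V_p\embed H\embed V_p^*$ --- the inclusions being available precisely because $p\ge2$ --- the operator $u\mapsto\Delta_p u-F_\lambda'(u)$ is hemicontinuous (demicontinuity of $-\Delta_p$, continuity of $F_\lambda'$), weakly monotone (monotonicity of $-\Delta_p$ and of $\beta_\lambda$, up to a $-C_F\|\cdot\|_H^2$ term), coercive of order $p$, and of polynomial growth of order $p-1$ into $V_p^*$; together with the Lipschitz continuity of $\H$ this is exactly the Pardoux--Krylov--Rozovskii setting, yielding a unique variational solution $u_\lambda\in L^p_\cP(\Omega;C^0([0,T];H))\cap L^p_\cP(\Omega;L^p(0,T;V_p))$ of the equation with $F'$ replaced by $F_\lambda'$.

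\emph{Step 2 ($\lambda$-uniform estimates).} Itô's formula for $\tfrac12\|u_\lambda\|_H^2$, together with $\|\H(u_\lambda)\|_{\cL^2(U,H)}^2\le|\OO|C_\H^2$, $F_\lambda'(r)r\ge-C_Fr^2$, the Burkholder--Davis--Gundy inequality and Gronwall's lemma, should bound $u_\lambda$ uniformly in $L^p_\cP(\Omega;C^0([0,T];H))\cap L^p_\cP(\Omega;L^p(0,T;V_p))$, hence also $\gamma_p(\nabla u_\lambda)$ in $L^q_\cP(\Omega;L^q(0,T;L^q(\OO)^d))$ since $|\gamma_p(\nabla u_\lambda)|^q=|\nabla u_\lambda|^p$. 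The crucial estimate comes from Itô's formula applied to $v\mapsto\int_\OO\hat F_\lambda(v)$, $\hat F_\lambda$ the primitive of $F_\lambda'$ vanishing at $0$: pairing $-\Delta_p u_\lambda\in V_p^*$ with $F_\lambda'(u_\lambda)\in V_p$ gives $\int_\OO F_\lambda''(u_\lambda)|\nabla u_\lambda|^p\ge-C_F\int_\OO|\nabla u_\lambda|^p$, and using $\int_\OO\hat F_\lambda(u_0)\le\|F(u_0)\|_{L^1(\OO)}$ one is left with
\[ \E\int_0^T\!\!\int_\OO|F_\lambda'(u_\lambda)|^2+\sup_{t\le T}\E\int_\OO\hat F_\lambda(u_\lambda(t))\;\le\;C+\tfrac12\,\E\int_0^T\sum_k\int_\OO F_\lambda''(u_\lambda)\,h_k(u_\lambda)^2 \]
(note this does not require $u_0\in V_p$, since $\Phi_p(u_0)$ never appears). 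Here Assumption \textbf{C} is decisive: a stability lemma for Yosida approximations should give $\|F_\lambda''h_k^2\|_{L^\infty(-1,1)}\le C\big(\|F''h_k^2\|_{L^\infty(-1,1)}+\|h_k\|_{W^{1,\infty}(-1,1)}^2\big)$ uniformly in $\lambda$, so the right-hand side is bounded by $C(1+C_\H^2)$ independently of $\lambda$. Thus $F_\lambda'(u_\lambda)$ (equivalently $\beta_\lambda(u_\lambda)$) is bounded in $L^2_\cP(\Omega;L^2(0,T;H))$ uniformly, and since $\hat\beta_\lambda(r)\ge\tfrac1{2\lambda}\operatorname{dist}(r,[-1,1])^2$ the bound $\sup_t\E\int_\OO\hat\beta_\lambda(u_\lambda(t))\le C$ forces $\E\int_0^T\!\int_\OO\operatorname{dist}(u_\lambda,[-1,1])^2\le C\lambda$.

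\emph{Step 3 (limit and uniqueness).} Writing the equation for $u_\lambda-u_\mu$ and testing via Itô's formula, using monotonicity of $-\Delta_p$, the Yosida inequality $\langle\beta_\lambda(a)-\beta_\mu(b),a-b\rangle\ge-(\lambda+\mu)(\ldots)$ with the $L^2$-bound on $\beta_\lambda(u_\lambda)$, and Lipschitz continuity of $\H$, Gronwall's lemma should give $\E\sup_{t\le T}\|u_\lambda(t)-u_\mu(t)\|_H^2\le C(\lambda+\mu)$; hence $u_\lambda\to u$ strongly in $L^2_\cP(\Omega;C^0([0,T];H))$ and, along a subsequence, a.e.\ in $\Omega\times Q$. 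Combined with the weak limits of Step 2 (which in particular place $F'(u)$ in $L^2_\cP(\Omega;L^2(0,T;H))$), this identifies every term: $\H(u_\lambda)\to\H(u)$ in $L^2_\cP(\Omega;L^2(0,T;\cL^2(U,H)))$ by continuity and boundedness of the $h_k$, so the stochastic integrals converge; $|u|\le1$ a.e.\ by the distance bound and Fatou's lemma, whence $F_\lambda'(u_\lambda)\to F'(u)$ a.e.\ and thus weakly in $L^2$; and the $p$-Laplace term is identified by a Minty--Browder argument, the energy balance for $\|u_\lambda\|_H^2$ supplying $\limsup_\lambda\E\int_0^T\langle-\Delta_p u_\lambda,u_\lambda\rangle\le\E\int_0^T\langle\xi,u\rangle$ for the weak limit $\xi$ of $-\Delta_p u_\lambda$. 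Uniqueness would follow from Itô's formula for $\|u_1-u_2\|_H^2$ between two solutions (both satisfying $|u_i|\le1$): monotonicity of $-\Delta_p$ and of $\beta$ absorb the nonlinear drift up to $2C_F\|u_1-u_2\|_H^2$, $\|\H(u_1)-\H(u_2)\|_{\cL^2}^2\le C_\H^2\|u_1-u_2\|_H^2$, and Gronwall after taking expectations gives $u_1=u_2$.

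\emph{Main obstacle.} The delicate point will be Step 2: justifying Itô's formula for the nonlinear functional $\int_\OO\hat F_\lambda(\cdot)$ along the variational solution (which may need an extra Galerkin layer), and above all controlling the Itô correction $\sum_k\int_\OO F_\lambda''(u_\lambda)h_k(u_\lambda)^2$ uniformly in $\lambda$. This is precisely where the structure of Assumption \textbf{C} --- the $h_k$ vanishing at $\pm1$ together with $F''h_k^2$ bounded --- is indispensable and where the argument genuinely departs from the deterministic theory; the rest is a standard, if laborious, monotone-operator SPDE scheme.
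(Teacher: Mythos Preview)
Your overall scheme---Yosida-regularise the singular part of the drift, derive uniform estimates via It\^o's formula for $\tfrac12\|\cdot\|_H^2$ and for $\int_\OO F_\lambda(\cdot)$, prove a Cauchy property, pass to the limit, then uniqueness by monotonicity---is exactly the one the paper follows, and your identification of the ``main obstacle'' is spot-on. There is, however, one genuine technical difference that makes your crucial Step~2 harder than it needs to be.

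You keep $\H$ unregularised (extending $h_k$ by zero), so the It\^o correction reads $\sum_k\int_\OO F_\lambda''(u_\lambda)\,h_k(u_\lambda)^2$, and you then appeal to an unproved ``stability lemma'' of the form $\|F_\lambda'' h_k^2\|_{L^\infty}\le C(\|F'' h_k^2\|_{L^\infty}+\|h_k\|_{W^{1,\infty}}^2)$. Since $F_\lambda''(r)\le F''(J_\lambda(r))$, the mismatch of arguments---$F''$ at $J_\lambda(r)$, $h_k$ at $r$---is not obviously harmless under Assumption~\textbf{C} alone, and proving this bound in full generality would require genuine work. The paper sidesteps the issue entirely by also regularising the noise: it sets $\H_\lambda(v):=\H(J_\lambda(v))$, so the It\^o correction becomes $\sum_k\int_\OO F_\lambda''(u_\lambda)\,|h_k(J_\lambda(u_\lambda))|^2$, and then the chain $F_\lambda''(u_\lambda)\le F''(J_\lambda(u_\lambda))$ followed by $\|F'' h_k^2\|_{L^\infty(-1,1)}\le C_\H^2$ applies directly, with both factors evaluated at the same point $J_\lambda(u_\lambda)\in(-1,1)$. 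This single device removes precisely the obstacle you flagged.

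Two smaller points where the paper is cleaner than your sketch. First, in the Cauchy step the paper uses the \emph{strong} monotonicity of the $p$-Laplacian (for $p\ge2$) to obtain convergence of $(u_\lambda)$ in $L^p(\Omega;L^p(0,T;V_p))$ as well, so that $-\Delta_p u_\lambda\rightharpoonup-\Delta_p u$ follows by demicontinuity without any Minty argument. Second, to identify the limit of $F_\lambda'(u_\lambda)$ the paper invokes the weak--strong closure of the maximal monotone graph $\beta$ (from $J_\lambda(u_\lambda)\to u$ strongly and $\beta_\lambda(u_\lambda)=\beta(J_\lambda(u_\lambda))\rightharpoonup\xi$), which is more robust than your a.e.\ argument: the latter needs $|u|<1$ a.e., whereas a~priori you only have $|u|\le1$. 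The graph-closure argument also gives $|u|\le1$ a.e.\ for free, since $F'(u)\in L^2(Q)$ forces $u$ into the effective domain $(-1,1)$ almost everywhere.
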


\begin{thm}[Refined well-posedness]
  \label{thm1+}
  Assume {\bf A--B--C}, and let 
  \[
  u_0\in V_p\,, \qquad F(u_0)\in L^1(\OO)\,.
  \]
  Then, there exists a unique $u$ with
  \begin{align*}
  &u\in L^p_\cP(\Omega; C^0([0,T]; H))\cap L^p_w(\Omega; L^\infty(0,T; V_p))\,, \\
  &\Delta_p u, F'(u) \in L^2_\cP(\Omega; L^2(0,T; H))\,,
  \end{align*}
  such that 
  \begin{equation}\label{var_sol+}
  u(t) - \int_0^t\Delta_pu(s)\,\d s + \int_0^tF'(u(s))\,\d s = 
  u_0 + \int_0^t\H(u(s))\,\d W(s) 
  \end{equation}
 for every $t \in [0,T]$, $\P$-a.s. In particular, it holds that 
 \[
 u\in C^0_w([0,T]; V_p) \qquad\P\text{-a.s.}
 \]
\end{thm}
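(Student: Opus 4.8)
The plan is as follows. \emph{Uniqueness} is immediate: any process enjoying the regularity stated in Theorem~\ref{thm1+} is, a fortiori, a variational solution in the sense of Theorem~\ref{thm1} — testing \eqref{var_sol+} against $v\in V_p$ and using $\langle-\Delta_pu,v\rangle_{V_p^*,V_p}=\int_\OO\gamma_p(\nabla u)\cdot\nabla v$ one recovers \eqref{variational_sol}, while the integrability requirements of Theorem~\ref{thm1} all follow from those in force here (in particular $L^\infty(0,T;V_p)\embed L^p(0,T;V_p)$ on a bounded interval) — so it coincides with the solution already built, and the whole content is the \emph{existence} of a solution with the extra regularity. To obtain it I revisit the approximation scheme behind Theorem~\ref{thm1}, exploiting now $u_0\in V_p$. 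For $\lambda\in(0,1)$ let $\beta_\lambda$ be the Yosida regularisation of the maximal monotone graph $\beta$ of Assumption~\textbf{B}, set $F_\lambda'(r):=\beta_\lambda(r)-C_Fr$, and extend each $h_k$ by zero outside $(-1,1)$ — which preserves its $W^{1,\infty}$-norm since $h_k(\pm1)=0$ — so that $\H$ becomes $C_\H$-Lipschitz on all of $H$. By the classical variational theory of Pardoux, Krylov and Rozovskii (the drift $-\Delta_p+F_\lambda'$ is monotone, coercive and demicontinuous, the diffusion Lipschitz) the regularised problem
\[
  \d u^\lambda-\Delta_pu^\lambda\,\d t+F_\lambda'(u^\lambda)\,\d t=\H(u^\lambda)\,\d W\,,\qquad u^\lambda(0)=u_0\,,
\]
has a unique solution $u^\lambda\in L^2_\cP(\Omega;C^0([0,T];H))\cap L^p_\cP(\Omega;L^p(0,T;V_p))$.

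Next come the \emph{a priori estimates}. The It\^o formulae for $\tfrac12\|u^\lambda\|_H^2$ and for $\int_\OO F_\lambda(u^\lambda)$, together with Assumptions~\textbf{B}--\textbf{C} and Gronwall's lemma, reproduce the bounds underlying Theorem~\ref{thm1}, uniformly in $\lambda$; in particular $F_\lambda'(u^\lambda)$ stays bounded in $L^2_\cP(\Omega;L^2(0,T;H))$. The new ingredient is the It\^o formula for $\Phi_p(u^\lambda(t))=\tfrac1p\int_\OO|\nabla u^\lambda|^p$, which — $\Phi_p$ being convex but not twice differentiable — is justified via a further inner regularisation $\Phi_p^\eps$ of the $p$-energy (this also gives $\Delta_pu^\lambda\in L^2_\cP(\Omega;L^2(0,T;H))$ for each $\lambda$). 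Testing the resulting identity against $-\Delta_pu^\lambda\in\partial\Phi_p(u^\lambda)$ produces $\Phi_p(u^\lambda(t))+\int_0^t\|\Delta_pu^\lambda\|_H^2\,\d s$ on the left, balanced on the right by $\Phi_p(u_0)$, the cross term $\int_0^t\langle\Delta_pu^\lambda,F_\lambda'(u^\lambda)\rangle_H\,\d s$, the It\^o correction $\tfrac12\int_0^t\sum_k D^2\Phi_p(u^\lambda)[h_k(u^\lambda),h_k(u^\lambda)]\,\d s$, and the martingale $\int_0^t\langle-\Delta_pu^\lambda,\H(u^\lambda)\,\d W\rangle_H$. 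The structure is favourable: $\langle\Delta_pu^\lambda,\beta_\lambda(u^\lambda)\rangle_H=-\int_\OO\beta_\lambda'(u^\lambda)|\nabla u^\lambda|^p\le0$ by monotonicity of $\beta_\lambda$; $-C_F\langle\Delta_pu^\lambda,u^\lambda\rangle_H=C_Fp\,\Phi_p(u^\lambda)$; and, since $\nabla h_k(u^\lambda)=h_k'(u^\lambda)\nabla u^\lambda$, one computes $\sum_k D^2\Phi_p(u^\lambda)[h_k(u^\lambda),h_k(u^\lambda)]=(p-1)\sum_k\int_\OO h_k'(u^\lambda)^2|\nabla u^\lambda|^p\le(p-1)p\,C_\H^2\,\Phi_p(u^\lambda)$, so both the cross term and the correction are controlled by $\Phi_p(u^\lambda)$ itself. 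Taking suprema in time, expectations, and estimating the martingale by Burkholder--Davis--Gundy — its quadratic variation is dominated by $C_\H^2|\OO|\int_0^{\cdot}\|\Delta_pu^\lambda\|_H^2\,\d s$, a small multiple of which is absorbed on the left — Gronwall yields
\[
  \E\sup_{t\in[0,T]}\|u^\lambda(t)\|_{V_p}^p+\E\int_0^T\|\Delta_pu^\lambda(s)\|_H^2\,\d s\le C\bigl(1+\|u_0\|_{V_p}^p\bigr)\,,
\]
with $C$ independent of $\lambda$ (using $\Phi_p(u_0)\sim\|u_0\|_{V_p}^p$).

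For the \emph{passage to the limit}, the uniform bounds give, along a subsequence, $u^\lambda\wstarto u$ in $L^p_w(\Omega;L^\infty(0,T;V_p))$ and $\Delta_pu^\lambda\wto\xi$, $F_\lambda'(u^\lambda)\wto\eta$ in $L^2_\cP(\Omega;L^2(0,T;H))$; a compactness argument as in the proof of Theorem~\ref{thm1} yields strong convergence of $u^\lambda$ — first in $L^2(\Omega\times Q)$, then, by monotonicity of $-\Delta_p$, strongly in $L^p_\cP(\Omega;L^p(0,T;V_p))$ — whence, by demicontinuity of $-\Delta_p$ and maximal monotonicity of $\beta$ (with $|u|\le1$ a.e.), $\xi=\Delta_pu$ and $\eta=F'(u)$. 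Passing to the limit in the regularised equation produces \eqref{var_sol+} in $H$ with the stated regularity; uniqueness identifies $u$ with the solution of Theorem~\ref{thm1}, now upgraded. Finally, for $\P$-a.e.~$\omega$ we have $u(\omega)\in L^\infty(0,T;V_p)\cap C^0([0,T];H)$, and since $V_p$ is reflexive and $V_p\embed H$ is continuous and dense, the classical weak-continuity interpolation lemma (of Strauss; see e.g.~Lions--Magenes or Temam) gives $u(\omega)\in C^0_w([0,T];V_p)$.

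The \emph{main obstacle} is the It\^o formula for the non-smooth convex functional $\Phi_p$ along the merely variationally regular approximations, and the verification that the resulting bounds are genuinely uniform in $\lambda$ — especially the treatment of the $\beta_\lambda$-term (where monotonicity of the Yosida approximation is the decisive point) and of the It\^o correction. A second, more routine but essential difficulty is the identification of the weak limit $\xi=\Delta_pu$: because the $p$-Laplacian is nonlinear this needs a strong-convergence / Minty-type argument rather than mere weak closedness of the graph.
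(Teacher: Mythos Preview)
Your argument is essentially the paper's: apply It\^o's formula to $\Phi_p(u^\lambda)$ along the Yosida approximations, use $\beta_\lambda'\ge0$ for the cross term, Assumption~\textbf{C} for the It\^o correction, close with BDG and Gronwall, and identify the weak limit of $\Delta_pu^\lambda$ via the strong $L^p(0,T;V_p)$-convergence already secured in the proof of Theorem~\ref{thm1}; the weak continuity in $V_p$ then follows by the standard interpolation lemma. The one place where your variant differs is the noise regularisation: you extend $h_k$ by zero outside $(-1,1)$, whereas the paper composes with the resolvent, setting $\H_\lambda(v):=\H(J_\lambda(v))$. This is harmless for the first, Cauchy, and refined $\Phi_p$-estimates, but it makes the \emph{second} estimate (the bound on $F_\lambda'(u^\lambda)$ via It\^o on $\int_\OO F_\lambda$) delicate: the trace term becomes $\sum_k\int_\OO F_\lambda''(u^\lambda)\,h_k(u^\lambda)^2$ with $F_\lambda''(r)=\beta'(J_\lambda(r))J_\lambda'(r)-C_F$, so $\beta'$ is evaluated at $J_\lambda(r)$ while $h_k$ sits at $r$, and Assumption~\textbf{C} (which bounds $F''(r)h_k(r)^2$) does not apply directly without extra structure on $F''$. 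The paper's composition trick puts both factors at $J_\lambda(u^\lambda)$, so that $F_\lambda''(u^\lambda)\,h_k(J_\lambda(u^\lambda))^2\le F''(J_\lambda(u^\lambda))\,h_k(J_\lambda(u^\lambda))^2\le\|F''h_k^2\|_{L^\infty(-1,1)}$ immediately. This is the only step in your sketch that needs a small repair; otherwise the proof is correct and parallel to the paper.
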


Our next result gives sufficient conditions 
on the data so that the trajectories of $u$ are 
strictly separated from the potential barriers $\pm1$.
To this end, we will rely on the following assumption,
which can be seen as a generalisation of {\bf C} to suitable 
higher order derivates.

{\bf Assumption D}. There exists $\varsigma\in\enne\setminus\{0\}$ such that 
$\varsigma(p-d)> pd$ and 
\begin{align*}
  &(h_k)_{k\in\enne}\subset W_0^{\varsigma+2,\infty}(-1,1)\,,\\
  &C_{\H,\varsigma}^2:=\sum_{k=0}^\infty\norm{h_k}^2_{W^{\varsigma+2,\infty}(-1,1)}<+\infty\,.
\end{align*}

\begin{thm}[Separation property]
  \label{thm2}
  Assume {\bf A--B--C--D}, let 
  \[
  u_0\in V_p\cap L^\infty(\OO)\,, 
  \qquad F(u_0)\in L^1(\OO)\,,
  \]
  and suppose that $u_0$ is strictly separated from $\pm1$, namely that 
  \[
  \exists\,\delta_0\in(0,1):\quad
  \norm{u_0}_{L^\infty(\OO)}=1-\delta_0\,.
  \]
  Then $u$ is strictly separated from $\pm1$ almost surely, namely
  \[
  \P\left\{\exists\,\delta\in(0,\delta_0]:\;\sup_{(t,x)\in[0,T]\times\overline\OO}
  |u(t,x)|\leq1-\delta\right\}=1\,.
  \]
\end{thm}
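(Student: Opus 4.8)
The plan is to establish the separation property by combining the refined well-posedness of Theorem~\ref{thm1+} with a pathwise analysis of the equation near the barriers, exploiting the structural assumption {\bf D} which forces the noise coefficients $h_k$ and their first $\varsigma+2$ derivatives to vanish at $\pm1$. The key point is that under {\bf D} we have $\varsigma(p-d)>pd$, which (via Sobolev embeddings $V_p = W^{1,p}_0(\OO)\embed C^{0,\alpha}(\overline\OO)$ when $p>d$, and iterated elliptic-type estimates) should yield enough spatial regularity to make $\sup_{Q}|u(\omega,t,x)|$ a genuine pathwise quantity and, more importantly, to run a comparison/barrier argument for $\P$-a.e.~$\omega$.

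\textbf{Step 1: Pathwise reformulation and enhanced regularity.} First I would upgrade the regularity coming from Theorem~\ref{thm1+}. Starting from $u_0\in V_p\cap L^\infty(\OO)$ with $p>d$, I would show that, $\P$-a.s., $u(\omega,\cdot)$ lies in a space that embeds continuously into $C^0(\overline Q)$ --- e.g.~combining $u\in C^0([0,T];H)$, $u\in L^\infty(0,T;V_p)$, and the time-regularity of the equation $\d u = \Delta_p u\,\d t - F'(u)\,\d t + \H(u)\,\d W$ together with the vanishing of the noise coefficients at the barriers. The role of Assumption {\bf D} here is to provide higher-order bounds on $\H(u)$: since $h_k\in W^{\varsigma+2,\infty}_0(-1,1)$ with summable norms, the stochastic convolution inherits extra smoothness, and one can bootstrap. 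The condition $\varsigma(p-d)>pd$ is precisely what is needed to close this bootstrap and land in a Banach space of continuous functions.

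\textbf{Step 2: Localisation and the barrier argument.} The heart of the proof. Fix $\omega$ in a full-measure set. Since $\|u_0\|_{L^\infty}=1-\delta_0$, I would introduce the (random) exit level and consider, for $\eta\in(0,\delta_0)$, the stopping time $\tau_\eta(\omega):=\inf\{t\in[0,T]: \|u(\omega,t)\|_{L^\infty(\OO)}\geq 1-\eta\}$. On $[0,\tau_\eta]$ the solution takes values in $\mathcal B_{1-\eta}$, where $\beta=F'+C_F\mathrm{id}$ is bounded and $\H$ is smooth. The crucial observation is that near the barrier $+1$, say on the region where $u\geq 1-\eta$, the noise coefficients $h_k(u)$ are of order $\mathrm{dist}(u,1)^{\varsigma+2}$ (by Taylor expansion using $h_k\in W^{\varsigma+2,\infty}_0$), so the stochastic forcing is negligibly small compared to the restoring drift $-F'(u)\to-\infty$. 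Thus one can compare $u$ with a suitable supersolution of the form $1-\psi(t)$ where $\psi$ solves an ODE $\psi'=-F'(1-\psi)+(\text{small noise terms})$, staying bounded away from $0$. Making this rigorous pathwise --- e.g.~via an It\^o formula applied to a convex function of $(u-(1-\eta))_+$, or by a Stampacchia-type truncation argument on the stochastic equation --- gives that $\tau_\eta(\omega)=T$ for $\eta$ small enough (depending on $\omega$), hence $\sup_Q|u(\omega,\cdot)|\leq 1-\delta(\omega)$ with $\delta(\omega)>0$.

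\textbf{Step 3: Measurability and conclusion.} Finally I would check that the event $\{\exists\,\delta\in(0,\delta_0]:\sup_{(t,x)\in[0,T]\times\overline\OO}|u(t,x)|\leq 1-\delta\}$ is measurable (it equals $\bigcup_{n}\{\sup_Q|u|\leq 1-1/n\}$, each of which is measurable by the pathwise continuity from Step~1) and has probability one by Step~2. \textbf{The main obstacle} I anticipate is Step~2: turning the heuristic ``the noise is too weak near the barrier to push $u$ to $\pm1$'' into a rigorous pathwise estimate, since one cannot simply condition on $\omega$ inside a stochastic integral. The technically cleanest route is probably to work with the It\^o formula for $\int_\OO \Psi_\lambda(u)$ where $\Psi_\lambda$ is a regularised version of the convex function $r\mapsto(\,\cdot\,)$ blowing up at the barrier (e.g.~a truncation of $F$ or of $\log\frac{1}{1-|r|}$), control the It\^o correction term using $\|F''h_k^2\|_{L^\infty}$ from {\bf C} and the higher vanishing from {\bf D}, and take expectations --- but then one must still upgrade an $L^1(\Omega)$ or $L^2(\Omega)$ bound to an almost-sure separation, which forces a further iteration over shrinking levels $\eta$ combined with Borel--Cantelli. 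The interplay between the deterministic singular drift and the degenerate multiplicative noise, mediated precisely by the exponent condition $\varsigma(p-d)>pd$, is where all the difficulty is concentrated.
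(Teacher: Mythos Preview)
Your proposal contains several correct ingredients (the Sobolev embedding $V_p\embed C^{0,\alpha}(\overline\OO)$, the Taylor expansion $|h_k(r)|\lesssim \|h_k\|_{W^{\varsigma+2,\infty}}|1\mp r|^{\varsigma+2}$, and the idea of applying It\^o's formula to a convex function blowing up at $\pm1$), but the overall architecture misses the paper's key mechanism and misidentifies the role of the condition $\varsigma(p-d)>pd$.

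The paper does \emph{not} use a barrier/comparison argument, stopping times, or a Borel--Cantelli iteration. Instead it introduces the explicit functional $G_\varsigma(r):=(1-r^2)^{-\varsigma}$ and proves (via It\^o's formula on the Yosida regularisation $G_{\varsigma,\lambda}$, controlling the It\^o correction $G''_\varsigma(u)|h_k(u)|^2$ precisely by the Taylor bound you mention) the \emph{pathwise} estimate $\sup_{t}\|G_\varsigma(u(\omega,t))\|_{L^1(\OO)}<\infty$ for $\P$-a.e.~$\omega$. Separately, $\sup_t\|u(\omega,t)\|_{V_p}<\infty$. The decisive step is then purely deterministic: if $|u(\omega,\bar t,\bar x)|=1$ at some point, then by H\"older continuity $|1-u(\omega,\bar t,x)^2|\lesssim \|u(\omega,\bar t)\|_{C^{0,\alpha}}|x-\bar x|^\alpha$, hence
\[
\int_\OO G_\varsigma(u(\omega,\bar t,x))\,\d x \gtrsim \int_\OO |x-\bar x|^{-\varsigma\alpha}\,\d x\,.
\]
The condition $\varsigma(p-d)>pd$ is equivalent to $d/\varsigma<1-d/p$, so one may choose $\alpha\in(d/\varsigma,1-d/p)$ with $\varsigma\alpha>d$: the right-hand side is then $+\infty$, contradicting the finite $L^1$ bound. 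A short compactness argument (using $u(\omega)\in C^0_w([0,T];V_p)$) upgrades ``$|u|<1$ everywhere'' to ``$\sup_Q|u|\leq 1-\delta(\omega)$''. Your Step~1 bootstrap is unnecessary --- no regularity beyond $C^0_w([0,T];V_p)$ is used --- and your Step~2 would require controlling stochastic integrals pathwise in a way that is genuinely hard; the paper sidesteps this entirely by converting the problem into a non-integrability contradiction.
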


Let us spend a few words on the regularity of $u$. More specifically, 
one has from Theorem~\ref{thm1+} that, for $\P$-almost every $\omega\in\Omega$,
$u(\omega)\in C^0([0,T]; H)\cap L^\infty(0,T;V_p)\subset C^0_w([0,T]; V_p)$.
This ensures that $u(\omega,t)$ makes sense as an element of $V_p$
for {\em every} $t\in[0,T]$. Moreover, in the setting of Theorem~\ref{thm2}
it readily follows that $p>d$, so that $V_p\embed C^0(\overline\OO)$:
consequently, it makes sense to evaluate $u(\omega)$ pointwise in $(t,x)$
for every $t\in[0,T]$ and $x\in\overline\OO$. It follows then that 
under the assumptions of Theorem~\ref{thm2}
there exists a measurable set $\Omega^*\in\cF$ with $\P(\Omega^*)=1$
such that
\[
  \norm{u(\omega)}_{L^\infty(Q)}=\sup_{(t,x)\in[0,T]\times\overline\OO}|u(\omega,t,x)|
  \qquad\forall\,\omega\in\Omega^*\,.
\]
Bearing this consideration in mind, 
Theorem~\ref{thm2} ensures that  
\[
\forall\,\omega\in\Omega^*\quad\exists\,\delta(\omega)\in(0,1):\quad
\norm{u(\omega)}_{L^\infty(Q)}\leq 1-\delta(\omega)\,.
\]
In other words, this means that almost every trajectory of $u$ is 
strictly separated from the boundary $\pm1$ of the potential $F$.
Let us stress, nonetheless, that the threshold of separation is not
uniform in $\omega$: this identifies in a natural way 
a random variable $\Lambda$ 
representing the ``amount'' of separation from $\pm1$.
Indeed, one can introduce 
\beq\label{def:delta}
  \Lambda:\Omega\to(0,\delta_0]\,, \qquad
  \Lambda(\omega):=
  \begin{cases}
  1-\norm{u(\omega)}_{L^\infty(Q)}\quad&\text{if } \omega\in\Omega^*\,,\\
  \delta_0\quad&\text{if } \omega\in\Omega\setminus\Omega^*\,,
  \end{cases}
\eeq
where $\delta_0\in(0,1)$ is defined as in Theorem~\ref{thm2}.
Clearly, $\Lambda$ is well-defined and valued in $(0,\delta_0]$ by Theorem~\ref{thm2}.
Also, $\Lambda$ is actually $\cF$-measurable: indeed, this follows from the fact that
$u:\Omega\to L^2(Q)$ is strongly measurable,
hence also $\cF/\cB(L^2(Q))$-measurable, and that the function 
\[
  \Phi_\infty:L^2(Q)\to[0,+\infty]\,, \qquad
  \Phi_\infty(v):=
  \begin{cases}
  \norm{v}_{L^\infty(Q)} \quad&\text{if } v\in L^\infty(Q)\,,\\
  +\infty \quad&\text{otherwise}\,,
  \end{cases}
\]
is $\cB(L^2(Q))/\cB(\erre)$-measurable.

We have seen that 
\eqref{def:delta} defines a random variable on $(\Omega,\cF)$
with values in $(0,\delta_0]$ almost everywhere, representing 
the magnitude of separation of the trajectories of $u$ from the barriers $\pm1$.
As we have pointed out before, $\Lambda$ is generally not constant in $\Omega$.
Of course, we have the trivial relations 
\[
  \P\{\Lambda\leq0\}=0 \qquad\text{and}\qquad\P\{\Lambda\leq\delta_0\}=1\,,
\]
meaning that the distribution of the random variable $\Lambda$
gives full measure to the interval $(0,\delta_0]$.
A natural question is then to precisely investigate the probability distribution of 
$\Lambda$, by studying the asymptotic behaviour of the measures of its
upper/lower level sets through the analysis 
of their rates of convergence.
Namely, for every fixed $\delta\in(0,\delta_0)$ close to $0$,
we aim at giving an estimation of the probability 
\[
\P\{\omega\in\Omega:
\Lambda(\omega)\leq\delta\}=\P\left\{\omega \in \Omega: \norm{u(\omega)}_{L^\infty(Q)}\geq1-\delta\right\}\,,
\]
and studying its behaviour as $\delta\searrow0$.
More specifically, for every $\delta\in(0,\delta_0)$,
the probability $\P\{\Lambda\leq\delta\}$ gives a 
quantitative measure of the trajectories that are 
separated from $\pm1$ of less than the threshold $\delta$.
Clearly, by Theorem~\ref{thm2} is necessarily holds that 
\[
  \lim_{\delta\searrow0}\P\{\Lambda\leq\delta\}=0\,.
\]
The aim of the following result is to give an estimate of the exact rate of convergence 
for such probability as $\delta\searrow0$.

\begin{thm}[Probability of separation]
  \label{thm3}
  Assume the setting of Theorem~\ref{thm2}. 
  Then, for every $\alpha\in (d/\varsigma, 1-d/p)$
  there exist two constants $L>0$ and $\delta_*\in(0,\delta_0)$, 
  depending only on $\alpha$, $\varsigma$, $p$, $T$, $\OO$, $F$, $\H$, and $u_0$,
  such that, setting $\rho:=p\frac{\varsigma - d/\alpha}{p + d/\alpha}$,
  \[
  \P\{\Lambda \leq \delta\} = 
  \P\left\{\sup_{(t,x)\in[0,T]\times\overline\OO}
  |u(t,x)|\geq1-\delta\right\}\leq  \exp\left(-L\delta^{-\rho}\right) \qquad
  \forall\,\delta\in(0,\delta_*)\,.
  \]
\end{thm}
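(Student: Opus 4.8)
The plan is the following. First, it suffices to bound $\P\{\sup_{(t,x)\in[0,T]\times\overline\OO}u(t,x)\ge1-\delta\}$: indeed $-u$ solves the analogue of \eqref{eq:intro} with potential $r\mapsto F(-r)$ and diffusion coefficients $r\mapsto-h_k(-r)$, which satisfy {\bf A--B--C--D} with the same constants, so the bound for $\P\{\inf_Q u\le-(1-\delta)\}$ follows by the very same argument applied to this reflected equation, and a union bound then yields the claim. We fix $\alpha\in(d/\varsigma,1-d/p)$ once and for all; since $p>d$ in this regime we have $V_p\embed C^{0,\alpha}(\overline\OO)$, and from Assumption {\bf D} a Taylor expansion at $\pm1$ --- where $h_k$ together with its derivatives up to order $\varsigma+1$ vanishes --- gives the pointwise bound $|h_k(r)|\le c_k(1-|r|)^{\varsigma+2}$ with $\sum_k c_k^2\lesssim C_{\H,\varsigma}^2$, so that $\sum_k h_k(u)^2=O\big((1-|u|)^{2(\varsigma+2)}\big)$ near the barriers.

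Fix $\delta\in(0,\delta_0/3)$. The core of the proof is the decomposition
\[
\P\{\sup_Q u\ge1-\delta\}\ \le\ \P\big(\{\sup_Q u\ge1-\delta\}\cap G\big)+\P(G^c)\,,
\]
where $G$ is a ``good event'' on which a family of real continuous martingales $(M^{(n)})$ --- the stochastic corrections arising in the energy estimates below --- satisfies $\sup_{[0,T]}|M^{(n)}|\le\lambda_n$ for suitably chosen thresholds $\lambda_n$. On $G$ one runs a De~Giorgi--Stampacchia-type level-set iteration for the $p$-parabolic inequalities solved by the truncations $(u-\kappa_n)^+$ along a sequence $\kappa_n\uparrow1-2\delta$ with $\kappa_0=1-3\delta$ (so that $u_0<\kappa_0$, and also $\kappa_0>R_F$, for $\delta<\delta_*$). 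Applying Itô's formula to $t\mapsto\int_\OO\big((u(t)-\kappa_n)^+\big)^{m_n}$ --- rigorously justified by first carrying out the computation on the regularised equation of Section~\ref{sec:proof1} and then passing to the limit --- one keeps the sign-definite $p$-Laplacian dissipation on the good side, discards the favourable drift $-\int_{\{u>\kappa_n\}}(u-\kappa_n)^{m_n-1}F'(u)\le0$ coming from $F'\ge0$ on $(R_F,1)$, and bounds both the Itô correction and the quadratic variation $\langle M^{(n)}\rangle_T$ by $O\big((1-\kappa_n)^{N_n}\big)$ with $N_n$ large, precisely because $\sum_k h_k(u)^2=O((1-\kappa_n)^{2(\varsigma+2)})$ on $\{u>\kappa_n\}$. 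Iterating the resulting recursion, in which the gain at each step comes from the Hölder-type Sobolev embedding $V_p\embed C^{0,\alpha}(\overline\OO)$ (this is where $p$, $d$ and $\alpha$ come in), and using that $(u-\kappa_n)^+\in V_p$ for $\kappa_n>0$ thanks to the homogeneous Dirichlet datum --- which allows the interior and boundary $L^\infty$-estimates to be carried out on the same footing, the near-boundary contribution being absorbed by the boundary condition --- one concludes $\sup_Q u\le1-2\delta$ on $G$. Since $1-2\delta<1-\delta$, this forces $\P\big(\{\sup_Q u\ge1-\delta\}\cap G\big)=0$ for every $\delta<\delta_*$.

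It then remains to estimate $\P(G^c)\le\sum_n\P\big(\sup_{[0,T]}|M^{(n)}|>\lambda_n\big)$. By the exponential (Bernstein) inequality for continuous martingales,
\[
\P\big(\sup_{[0,T]}|M^{(n)}|>\lambda_n\big)\le2\exp\!\Big(-\tfrac{\lambda_n^2}{2\langle M^{(n)}\rangle_T}\Big)\le 2\exp\!\big(-c\,\lambda_n^2(1-\kappa_n)^{-N_n}\big)\,,
\]
and it is exactly here that the high-order vanishing of the noise at $\pm1$ (Assumption {\bf D}) is decisive: it makes $\langle M^{(n)}\rangle_T$ --- hence the cost of forcing $M^{(n)}$ above $\lambda_n$ --- polynomially small in $\delta$. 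Choosing the thresholds $\lambda_n$ as small as the iteration permits (geometric in $n$, with ratio dictated by the growth of the Moser--Sobolev constants) and then optimising the leading one $\lambda=\lambda(\delta)$, the series $\sum_n\P\big(\sup|M^{(n)}|>\lambda_n\big)$ is bounded by $\exp(-L\delta^{-\rho})$; tracking the exponents through the scheme --- the per-step Sobolev gain governed by $p$ and $d$, the margin $\alpha<1-d/p$, and the decay order $\varsigma$ controlling the size of $\langle M^{(n)}\rangle_T$ --- produces exactly $\rho=p\frac{\varsigma-d/\alpha}{p+d/\alpha}$, while $\delta_*\in(0,\delta_0)$ is fixed so that all the smallness requirements (in particular $1-3\delta>\max\{1-\delta_0,R_F\}$) hold for $\delta<\delta_*$.

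The step I expect to be the main obstacle is the simultaneous, quantitative control of the whole family $(M^{(n)})$: the thresholds $\lambda_n$ must be large enough that the level-set iteration closes pathwise on $G$ and delivers $\sup_Q u\le1-2\delta$, yet small enough --- in combination with the power-like blow-up of $1/\langle M^{(n)}\rangle_T$ in $1/(1-\kappa_n)$ granted by Assumption {\bf D} --- that $\sum_n\P\big(\sup|M^{(n)}|>\lambda_n\big)\le\exp(-L\delta^{-\rho})$; extracting the sharp balance of the structural exponents $p,d,\varsigma,\alpha$ from this competition is the delicate point. A secondary, more routine difficulty is the rigorous justification of Itô's formula for the nonlinear truncated functionals, which is handled by working on the regularised equation first and then passing to the limit through the convergences established in Sections~\ref{sec:proof1}--\ref{sec:proof2}.
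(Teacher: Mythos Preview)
Your approach is genuinely different from the paper's, and it has a real gap. The paper does \emph{not} iterate level sets. It introduces the single Lyapunov function $G_\varsigma(r)=(1-r^2)^{-\varsigma}$ and proves two pathwise bounds (Lemmas~\ref{lem:path1}--\ref{lem:path2}):
\[
\sup_{t}\|G_\varsigma(u(t))\|_{L^1(\OO)}\le K_1+\sup_t M_1(t),\qquad
\sup_{t}\|u(t)\|_{V_p}^p\le K_2\bigl(1+\sup_t M_2(t)\bigr),
\]
involving only \emph{two} real martingales $M_1,M_2$, whose brackets satisfy $[M_i](T)\le a_i\sup_t M_i(t)+b_i$. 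A short geometric argument (Lemma~\ref{lem:prob_sep}) --- if $|u(\bar t,\bar x)|\ge1-\delta$, then $G_\varsigma(u(\bar t,\cdot))$ is large on a ball of radius $\sim(\delta/\|u\|_{V_p})^{1/\alpha}$ by the $C^{0,\alpha}$ embedding --- gives
\[
\{\Lambda\le\delta\}\subset\Bigl\{\sup_t\|G_\varsigma(u(t))\|_{L^1(\OO)}\cdot\sup_t\|u(t)\|_{V_p}^{d/\alpha}\ge K\delta^{-(\varsigma-d/\alpha)}\Bigr\}.
\]
The product is split by Young's inequality with exponent $\bar\eta=1+d/(p\alpha)$, chosen so that the two resulting tail events carry the \emph{same} power of $\delta$; this single optimisation is exactly what produces $\rho=p(\varsigma-d/\alpha)/(p+d/\alpha)$. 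Two applications of Bernstein's inequality (Lemma~\ref{lem:bern}) then finish the proof.

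Your proposal never actually derives $\rho$. You assert that ``tracking the exponents through the scheme \dots\ produces exactly $\rho=p\frac{\varsigma-d/\alpha}{p+d/\alpha}$'' and immediately concede that this extraction ``is the delicate point'' you have not carried out. But in the paper $\rho$ is not a by-product of constant-tracking through an iteration: it comes from one explicit balance between two quantities of different homogeneity (the $L^1$-mass of $G_\varsigma(u)$, which blows up like $\delta^{-\varsigma}$ near the barrier, versus the $V_p$-norm raised to $d/\alpha$, which controls the radius of the H\"older ball). There is no evident mechanism in your level-set recursion --- whose per-step gain is an integrability improvement, not the $C^{0,\alpha}$ seminorm you invoke --- that would reproduce this specific value; and you mix De~Giorgi truncation with Moser-type varying powers $m_n$ without specifying either the recursion or the choice of $\lambda_n$. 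Without that computation the proposal is an outline of a plausible strategy for \emph{some} exponential tail, not a proof of Theorem~\ref{thm3} with the stated $\rho$.
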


Eventually, our last result is concerned with the investigation of the effect 
of the noise on the separation principle with respect to the deterministic equation.
More precisely, we consider here the following family of parameter-dependent problems
\begin{align}
  \label{1_eps}
  \d u_\eps - \Delta_p u_\eps\,\d t + F'(u_\eps)\,\d t 
  = \sqrt\eps\mathcal H(u_\eps)\,\d W \qquad&\text{in } (0,T)\times\OO\,,\\
  \label{2_eps}
  u_\eps=0 \qquad&\text{in } (0,T)\times\partial\OO\,,\\
  \label{3_eps}
  u_\eps(0)=u_0 \qquad&\text{in } \OO\,,
\end{align}
indexed with respect to $\eps\in[0,1]$.
The choice $\eps=0$ yields the corresponding deterministic equation,
which is well-known (details are given in Section~\ref{sec:proof4}) 
to admit a unique solution $\bar u$, 
constant in $\Omega$. Also, 
if the initial datum satisfies the classical condition $\|u_0\|_{L^\infty(\OO)}=1-\delta_0$ with $\max\{|r_F|, |R_F|\}\leq1-\delta_0$
(this is automatically true when $F'$ is non-decreasing for example),
the deterministic solution $\bar u$
is separated from the barriers of at least the deterministic threshold $\delta_0$, i.e.
\begin{align}
\label{bar1}
&\bar u\in L^\infty_\cP(\Omega; C^0([0,T]; H)\cap L^\infty(0,T; V_p))\,,\\
\label{bar2}
&\Delta_p\bar u, F'(\bar u) \in L^\infty_\cP(\Omega; L^2(0,T; H))\,,\\
\label{bar3}
&\sup_{(t,x)\in[0,T]\times\overline\OO}|\bar u(\omega,t,x)|= 
1-\delta_0\qquad\P\text{-a.e.~}\omega\in\Omega\,.
\end{align}
For every $\eps\in(0,1)$, Theorems~\ref{thm1+}--\ref{thm2} ensure
the existence and uniqueness of a process $u_\eps$
which is separated almost surely from the barriers $\pm1$
of at least a random threshold $\Lambda_\eps:\Omega\to(0,\delta_0]$, i.e.
\begin{align*}
&u_\eps\in L^\ell_\cP(\Omega; C^0([0,T]; H)\cap L^\infty(0,T; V_p))
\quad\forall\,\ell\in[1,+\infty)\,,\\
&\Delta_pu_\eps, F'(u_\eps) \in L^\ell_\cP(\Omega; L^2(0,T; H))
\quad\forall\,\ell\in[1,+\infty)\,,\\
&\sup_{(t,x)\in[0,T]\times\overline\OO}|u_\eps(\omega, t,x)|= 1-\Lambda_\eps(\omega)
\qquad\P\text{-a.e.~}\omega\in\Omega\,.
\end{align*}
It would be relevant to prove a convergence result of the random separation thresholds
$(\Lambda_\eps)_\eps$ towards the constant $\delta_0$ as $\eps\downarrow0$. 
Our last result answers this question in the sense of convergence in probability:
also, we are able to provide an exponential estimate of the convergence rate.
\begin{thm}
  \label{thm4}
  Assume the setting of Theorem~\ref{thm2} and 
  that $\max\{|r_F|, |R_F|\}\leq1-\delta_0$. Then, it holds for every $\eta\in(0,\delta_0)$ that 
  \[
  \lim_{\eps\downarrow0}\P\left\{|\Lambda_\eps - \delta_0|\geq\eta\right\}=0\,.
  \]
  More precisely, for every $\alpha\in(d/\varsigma, 1-d/p)$
  there exists a function $N:(0,\delta_0)\to(0,+\infty)$, only depending on
  $\alpha$, $\varsigma$, $p$, $T$, $\OO$, $F$, $\H$, and $u_0$, such that
  \begin{alignat*}{2}
  &\limsup_{\eps\downarrow0}\eps\ln\P\left\{|\Lambda_\eps-\delta_0|>\eta\right\}
  \leq - N(\delta_0-\eta) \qquad\forall\,\eta\in(0,\delta_0) \,.
  \end{alignat*}
\end{thm}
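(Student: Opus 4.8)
\emph{Strategy.} The plan is to reduce the statement to an exponential-in-$1/\eps$ continuous-dependence estimate of $u_\eps$ on the deterministic solution $\bar u$, first at the level of $C^0([0,T];H)$ and then, by interpolation against a uniform $V_p$-bound, at the level of $L^\infty(Q)$. Throughout we use that in the present setting $p>d$, so $V_p\embed C^0(\overline\OO)$ compactly; since $u_\eps,\bar u\in C^0_w([0,T];V_p)\cap L^\infty(0,T;V_p)$ $\P$-a.s.\ by Theorems~\ref{thm1+}--\ref{thm2} and \eqref{bar1}, this upgrades to $u_\eps,\bar u\in C^0([0,T];C^0(\overline\OO))$ $\P$-a.s., so $\|u_\eps\|_{L^\infty(Q)}$ and $\|\bar u\|_{L^\infty(Q)}$ are genuine maxima. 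Because $\Lambda_\eps$ is valued in $(0,\delta_0]$ and, by \eqref{bar3} (this is where $\max\{|r_F|,|R_F|\}\le1-\delta_0$ is used), $\|\bar u\|_{L^\infty(Q)}=1-\delta_0$, on a set of full measure one has $|\Lambda_\eps-\delta_0|=\delta_0-\Lambda_\eps=\|u_\eps\|_{L^\infty(Q)}-\|\bar u\|_{L^\infty(Q)}\le\|u_\eps-\bar u\|_{L^\infty(Q)}$. Hence $\P\{|\Lambda_\eps-\delta_0|\ge\eta\}\le\P\{\|u_\eps-\bar u\|_{L^\infty(Q)}\ge\eta\}$, and it suffices to prove that for each $\eta\in(0,\delta_0)$ there are $N(\eta)>0$ and $\eps_\eta>0$ with $\P\{\|u_\eps-\bar u\|_{L^\infty(Q)}\ge\eta\}\le\exp(-N(\eta)/\eps)$ for $\eps<\eps_\eta$: this gives both the convergence in probability and, with $N(\delta_0-\eta)$ in place of $N(\eta)$, the stated rate.

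\emph{The $H$-estimate.} Set $v_\eps:=u_\eps-\bar u$; by \eqref{var_sol+} and \eqref{bar1}--\eqref{bar2} it solves $\d v_\eps-(\Delta_pu_\eps-\Delta_p\bar u)\,\d t+(F'(u_\eps)-F'(\bar u))\,\d t=\sqrt\eps\,\H(u_\eps)\,\d W$ with $v_\eps(0)=0$. Itô's formula in $V_p\embed H\embed V_p^*$ for $\|v_\eps(t)\|_H^2$, the monotonicity $\langle\Delta_pu_\eps-\Delta_p\bar u,v_\eps\rangle\le0$, the bound $\langle F'(u_\eps)-F'(\bar u),v_\eps\rangle=\langle\beta(u_\eps)-\beta(\bar u),v_\eps\rangle-C_F\|v_\eps\|_H^2\ge-C_F\|v_\eps\|_H^2$ — which holds \emph{unconditionally}, $\beta$ being monotone, so that no stopping time keeping $u_\eps$ away from $\pm1$ is needed here — and $\|\H(u_\eps)\|_{\cL^2(U,H)}^2\le C_\H^2|\OO|$, yield
\[
  \|v_\eps(t)\|_H^2\le\eps C_\H^2|\OO|T+2C_F\int_0^t\|v_\eps(s)\|_H^2\,\d s+2\sqrt\eps\int_0^t\langle v_\eps(s),\H(u_\eps(s))\,\d W(s)\rangle\,.
\]
Since $|u_\eps|,|\bar u|\le1$ a.e., the quadratic variation of the martingale term is deterministically bounded by $4C_\H^2|\OO|^2T$; a pathwise Gr\"onwall argument together with Bernstein's exponential inequality then gives, for every $c>0$ and all sufficiently small $\eps$,
\[
  \P\bigl\{\|v_\eps\|_{C^0([0,T];H)}\ge c\bigr\}\le\exp\bigl(-K_1c^4/\eps\bigr)\,,
\]
with $K_1>0$ depending only on $T$, $\OO$, $F$, $\H$.

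\emph{Upgrade to $L^\infty(Q)$.} By Gagliardo--Nirenberg (valid since $p>d$) there are $\theta\in(0,1)$, $C>0$ with $\|w\|_{L^\infty(\OO)}\le C\|w\|_{V_p}^\theta\|w\|_H^{1-\theta}$ for $w\in V_p$; hence, for any $R>0$,
\[
  \{\|v_\eps\|_{L^\infty(Q)}\ge\eta\}\subseteq\bigl\{\|v_\eps\|_{C^0([0,T];H)}\ge\vartheta_R(\eta)\bigr\}\cup\bigl\{\|u_\eps\|_{L^\infty(0,T;V_p)}>R\bigr\}\,,
\]
where $\vartheta_R(\eta):=\bigl(\eta/(C(R+M_0)^\theta)\bigr)^{1/(1-\theta)}$ and $M_0:=\|\bar u\|_{L^\infty(0,T;V_p)}<\infty$. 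For the second event one runs an Itô-type estimate for the energy $\Phi_p(u_\eps)$ exactly as in the proof of Theorem~\ref{thm1+}, but with the noise rescaled by $\sqrt\eps$: the drift closes thanks to $F''\ge-C_F$, the Itô correction is bounded by $\tfrac{\eps}{2}p(p-1)C_\H^2\,\Phi_p(u_\eps)$, and the martingale part $\sqrt\eps\sum_k\bigl(\int_\OO h_k'(u_\eps)|\nabla u_\eps|^p\bigr)\d\beta_k$ has bracket $\le C_\H^2p^2\Phi_p(u_\eps)^2$; stopping at $\tau_R:=\inf\{t:\Phi_p(u_\eps(t))>R\}$ makes this bracket deterministically bounded and, via Gr\"onwall plus Bernstein, one obtains $\P\{\|u_\eps\|_{L^\infty(0,T;V_p)}>R\}\le\exp(-c(R)/\eps)$ for every $R$ above the deterministic Gr\"onwall bound $R_*$, with $c(R)>0$. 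Choosing $R$ to balance $K_1\vartheta_R(\eta)^4$ against $c(R)$, and setting $N(\eta):=\min\{K_1\vartheta_R(\eta)^4,c(R)\}$ at that $R$, yields $\P\{\|v_\eps\|_{L^\infty(Q)}\ge\eta\}\le2\exp(-N(\eta)/\eps)$ for $\eps$ small, and the theorem follows.

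\emph{Main difficulty.} The sensitive step is the $V_p$-tail estimate: the energy inequality for $\Phi_p(u_\eps)$ has a stochastic part that is \emph{multiplicative in the energy} (bracket $\sim\Phi_p(u_\eps)^2$, an unavoidable consequence of the $p$-Laplacian structure of $\langle-\Delta_pu_\eps,\H(u_\eps)\,\d W\rangle$), so the rate $c(R)$ stays bounded as $R\to\infty$ rather than diverging; this forces the optimisation over the truncation level $R$ above, balancing it against the $R$-decreasing $H$-rate $K_1\vartheta_R(\eta)^4$, in order to extract a strictly positive $N(\delta_0-\eta)$. A technically heavier alternative, which instead automatically yields the $\alpha$-dependence appearing in the statement, is to re-run the boundary and interior $L^\infty$-estimates underlying Theorem~\ref{thm3} for the difference equation satisfied by $v_\eps$, the semiconvexity of $F$ again removing any need to track the separation of $u_\eps$ itself.
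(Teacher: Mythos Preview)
Your argument is correct in outline and takes a genuinely different route from the paper's. The paper does not compare $u_\eps$ with $\bar u$ at all: since $\Lambda_\eps\le\delta_0$ always, it simply writes $\{|\Lambda_\eps-\delta_0|\ge\eta\}=\{\Lambda_\eps\le\delta_0-\eta\}$ and then re-runs verbatim the machinery of Theorem~\ref{thm3} (Lemmas~\ref{lem:path1}, \ref{lem:path2}, \ref{lem:prob_sep}, \ref{lem:est_M}) for $u_\eps$, tracking the extra factor $\sqrt\eps$ in front of the martingales $M_{1,\eps},M_{2,\eps}$. Bernstein's inequality then acquires a $1/\eps$ in the exponent and one reads off directly $\P\{\Lambda_\eps\le\delta\}\le\exp(-N(\delta)/\eps)$, with $N$ built from the constants $K,K_1,K_2,K_i',K_i'',L_i$ already computed in Section~\ref{sec:proof3}. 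The $\alpha$-dependence in the statement is inherited from Lemma~\ref{lem:prob_sep}, which combines the H\"older embedding $V_p\embed C^{0,\alpha}$ with the singular functional $G_\varsigma$.

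Your approach, by contrast, proves the stronger intermediate fact that $u_\eps\to\bar u$ in $L^\infty(Q)$ in probability at an exponential rate, via $H$-continuous-dependence plus Gagliardo--Nirenberg plus a $V_p$-tail bound. It is more elementary in that it dispenses with $G_\varsigma$ entirely; Assumption~{\bf D} enters only through $p>d$ and through Theorem~\ref{thm2} (so that $\Lambda_\eps$ is well-defined). Two minor remarks. First, your claim that the $V_p$-tail rate $c(R)$ stays bounded comes from bounding the bracket by $\Phi_p(u_\eps)^2$; if instead you use $[M_{2,\eps}](T)\le\eps C_\H^2\int_Q|\Delta_pu_\eps|^2$ together with the pathwise estimate of Lemma~\ref{lem:path2}, you obtain a \emph{linear} self-closing bound on the bracket, so Bernstein applies without stopping and in fact $c(R)\to\infty$. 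This sidesteps both the optimisation in $R$ and the care needed for your stopping time $\tau_R$ (where $t\mapsto\Phi_p(u_\eps(t))$ is only lower semicontinuous). Second, your closing remark about re-running the Theorem~\ref{thm3} estimates ``for the difference equation'' is not quite the paper's route either: the paper applies them to $u_\eps$ itself, not to $v_\eps$.
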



\section{Well-posedness}
\label{sec:proof1}
The proof of the well-posedness we present here adapts and extends the main ideas contained in \cite{bertacco} to the case of $p$-Laplacian operator, $p \in [2,+\infty)$, and to general singular potentials. 

For every $\lambda>0$, let $\beta_\lambda:\erre\to\erre$ 
be the Yosida approximation of $\beta$.
Let also $\hat\beta: \erre \to [0,+\infty]$ be 
the unique proper, convex, lower semicontinuous function such that 
$\partial\hat\beta=\beta$ and $\hat\beta(0)=0$, and 
$\hat\beta_\lambda:\erre\to[0,+\infty)$
the associated Moreau-Yosida regularization, 
i.e.~$\hat\beta_\lambda(r):= \int_0^r \beta_\lambda(s)\, \d s$, $r\in\erre$.
We set 
\[
  F_\lambda:\erre\to[0,+\infty)\,, \qquad
  F_\lambda(r):=F(0) + \hat\beta_\lambda(r) - \frac{C_F}2|r|^2\,, \quad r\in\erre\,,
\]
so that $F_\lambda$ is of class $C^2$ with 
\[ 
  F_\lambda'(r)=\beta_\lambda(r) - C_Fr\,, \quad r\in\erre\,.
\]
Let us also set $J_\lambda:=(I+\lambda\beta)^{-1}:\erre\to(-1,1)$ as the resolvent of $\beta$,
and define
\[
  \mathcal H_\lambda:H\to\cL^2(U,H)\,, \qquad
  \H_\lambda(v):=\H(J_\lambda(v))\,, \quad v\in\ H\,.
\]
Since $J_\lambda$ is non expansive, 
it is immediate to check that $\mathcal H_\lambda$ is $C_\H$-Lipschitz-continuous.

For every $\lambda>0$, we consider the approximated problem 
\begin{align*}
  \d u_\lambda - \Delta_p u_\lambda\,\d t + F_\lambda'(u)\,\d t 
  = \mathcal H_\lambda(u)\,\d W \qquad&\text{in } (0,T)\times\OO\,,\\
  u_\lambda=0 \qquad&\text{in } (0,T)\times\partial\OO\,,\\
  u_\lambda(0)=u_0 \qquad&\text{in } \OO\,.
\end{align*}
The classical variational theory by Pardoux \cite{Pard} 
and Krylov--Rozovskii \cite{KR-SPDE} ensures that there exists a 
unique approximated solution
\[
  u_\lambda \in L^2_\cP(\Omega; C^0([0,T]; H))\cap L^p_\cP(\Omega; L^p(0,T; V_p))\,,
\]
in the sense that the following equality is satisfied in $V_p^*$  
\[
  u_\lambda(t) - \int_0^t\div\gamma_p(\nabla u_\lambda(s))\,\d s + \int_0^tF'_\lambda(u(s))\,\d s = 
  u_0 + \int_0^t\H_\lambda(u_\lambda(s))\,\d W(s) \quad\forall\,t\in[0,T]\,, \quad\P\text{-a.s.}
\]

\subsection{First estimate}  
It\^o's formula for the square of the $H$-norm and assumptions {\bf A--B--C} yield
\begin{align*}
  \frac12\norm{u_\lambda(t)}_H^2 + \int_{Q_t}|\nabla u_\lambda|^p
  \leq\frac12\norm{u_0}_H^2 + C_F\int_{Q_t}|u_\lambda|^2
  +\frac{C_\H^2}2t + \int_0^t\left(u_\lambda(s),\H_\lambda(u_\lambda(s))\,\d W(s)\right)_H
\end{align*}
for every $t\in[0,T]$, $\P$-almost surely. By the Burkholder-Davis-Gundy (BDG) and Young inequalities,
for every $\ell\geq2$ we have
\begin{align*}
  \E\sup_{r\in[0,t]}
  \left|\int_0^r\left(u_\lambda(s),\H_\lambda(u_\lambda(s))\,\d W(s)\right)_H\right|^{\ell/2}
  &\leq M_\ell\E\left(\int_0^t\norm{u_\lambda(s)}_H^2
  \norm{\H_\lambda(u_\lambda(s))}_{\cL^2(U,H)}^2\,\d s\right)^{\ell/4}\\
  &\leq \sigma\E\sup_{r\in[0,t]}\norm{u_\lambda(r)}^\ell_{H}+
  M_{\ell,\sigma} C_\H^{\ell} |\OO|
\end{align*}
for every $\sigma>0$ and for some constants 
$M_\ell, M_{\ell,\sigma}>0$ independent of $\lambda$.
Taking power $\ell/2$, choosing 
$\sigma>0$ sufficiently small, rearranging the terms,
and using the Gronwall lemma, we infer that 
possibly renominating $M_\ell>0$ independent of $\lambda$ it holds that
\beq
  \label{est1}
  \norm{u_\lambda}_{L^\ell_\cP(\Omega; C^0([0,T]; H))\cap 
  L^{p\ell/2}_\cP(\Omega; L^p(0,T; V_p))} \leq M_\ell\,.
\eeq

\subsection{Second estimate} 
Let $\mathcal F_\lambda: V_p \to \erre$ be defined as $\mathcal F_\lambda(u):= \int_\OO F_\lambda(u)$. Thanks to Assumption {\bf B},  $\mathcal F_\lambda$ is Fr\'echet differentiable with derivatives given by 
\begin{align*}
 D\mathcal F_\lambda(u)[y] &= \int_\OO F_\lambda'(u) y \\
 D^2\mathcal F_\lambda(u)[y_1, y_2] &= \int_\OO F_\lambda''(u) y_1 y_2\,,
 \end{align*}
 for every $u,y,y_1,y_2 \in V_p$.
Recalling that $F_\lambda$ is of class $C^2$ with Lipschitz derivative,
It\^o formula for $\mathcal F_\lambda$ yields 
\begin{equation}\label{ito-F}
\begin{aligned}
  &\int_\OO F_\lambda(u_\lambda(t)) + \int_{Q_t}F_\lambda''(u_\lambda)|\nabla u_\lambda|^p
  +\int_{Q_t}|F_\lambda'(u_\lambda)|^2\\
  &=\int_\OO F_\lambda(u_0) + \frac12\int_0^t\sum_{k=0}^\infty
  \int_\OO F_\lambda''(u_\lambda(s))|h_k(J_\lambda(u_\lambda(s)))|^2\,\d s\\
  &\qquad+ \int_0^t\left(F_\lambda'(u_\lambda(s)),\H_\lambda(u_\lambda(s))\,\d W(s)\right)_H
\end{aligned}
\end{equation}
for every $t\in[0,T]$, $\P$-almost surely.
Since $\beta_\lambda'$ is increasing it holds that $F_\lambda''(u_\lambda) = \beta_\lambda'(u_\lambda) - C_F \geq - C_F$, so that 
\[ \int_{Q_t}F_\lambda''(u_\lambda)|\nabla u_\lambda|^p \geq -C_F \int_{Q_t}|\nabla u_\lambda|^p \]
Furthermore, recalling that $\beta_\lambda(u_\lambda) = \beta(J_\lambda(u_\lambda))$ and that $J_\lambda:\erre \to (-1,1)$ is non-expansive, we have that 
$F_\lambda''(u_\lambda) = \beta'(J_\lambda(u_\lambda))J_\lambda'(u_\lambda) 
- C_F \leq
\beta'(J_\lambda(u_\lambda)) - C_F=
 F''(J_\lambda(u_\lambda))$.
This is useful to estimate the term
\begin{align*}
\int_0^t\sum_{k=0}^\infty 
\int_\OO F_\lambda''(u_\lambda(s))|h_k(J_\lambda(u_\lambda(s)))|^2\, \d s 
&\leq  \int_0^t\sum_{k=0}^\infty \int_\OO 
F''(J_\lambda (u_\lambda(s)))\left|h_k(J_\lambda(u_\lambda(s)))\right|^2\, \d s \\
&\leq |\OO| \int_0^t\sum_{k=0}^\infty \norm{F''h_k^2}_{L^\infty(-1,1)} \, \d s \\
&\leq C_\cH^2 |\OO| t \, .
\end{align*}
For what concerns the stochastic integral, Assumption {\bf C}, the BDG and Young inequalities yield
\begin{align*}
& \E \sup_{t \in [0,T]} \left|\int_0^t \left( F_\lambda'(u_\lambda(s)), 
\cH_\lambda(u_\lambda(s))\, \d W(s) \right)_H \right|^{\ell/2} \\
& \leq M_2\E \left( \int_0^T \| F_\lambda'(u_\lambda(s)) \|_H^2 \| \cH_\lambda(u_\lambda(s)) \|^2_{\cL^2(U,H)} \, \d s \right)^{\ell/4} \\
& \leq  \sigma\E\left( \int_0^T \| F_\lambda'(u_\lambda(s)) \|_H^2 \, \d s\right)^{\ell/2} +
 C_\H^\ell|\OO| M_{2,\sigma} \, , 
\end{align*}  
for every $\sigma >0$ and for some constants $M_2, M_{2,\sigma}>0$ independent of $\lambda$.
Summing up and possibly renominating $M_\ell$ again, we get that 
\[ 
\| F_\lambda'(u_\lambda) \|_{L^\ell_\cP(\Omega;L^2(0,T;H))}^\ell
\leq M_\ell(1+\E\|\nabla u_\lambda\|_{L^p(0,T; L^p(\OO))}^{p\ell/2})\]
and,  thanks to estimate \eqref{est1}, 
\begin{equation}\label{est2}
\| F_\lambda'(u_\lambda) \|_{L^\ell_\cP(\Omega;L^2(0,T;H))}  
 + \| \beta_\lambda(u_\lambda) \|_{L^\ell_\cP(\Omega;L^2(0,T;H))}  \leq M_\ell \, .
\end{equation}

\subsection{Cauchy property} 
For every $\lambda > \varepsilon > 0$ let us write the equation for the difference $u_\lambda - u_\varepsilon$: for every $v \in V_p$, for every $t \in [0,T]$, $\P$-a.s. it holds 
\begin{equation*}
\begin{aligned}
  \int_\OO &\left( u_\lambda(t) - u_\varepsilon(t) \right) v + \int_0^t \int_\OO \left( \gamma_p(\nabla u_\lambda(s)) - \gamma_p(\nabla u_\varepsilon(s)) \right) \cdot\nabla v\,\d s \\
  &+ \int_0^t \int_\OO  \left( F_\lambda'(u_\lambda(s)) - F_\varepsilon'(u_\varepsilon(s)) \right) v \,\d s 
  = \int_\OO \left(\int_0^t  \left( \H_\lambda(u_\lambda(s)) - \H_\varepsilon(u_\varepsilon) \right)\,\d W(s) \right) v \, .
\end{aligned}
\end{equation*}
It\^o formula for the square of the $H$-norm yields
\begin{equation*}
\begin{aligned}
  &\frac{1}{2}\|u_\lambda(t) - u_\varepsilon(t)\|_H^2 + 
  \int_0^t \big( \gamma_p(\nabla u_\lambda(s)) - 
  \gamma_p(\nabla u_\varepsilon(s)), 
   \nabla u_\lambda(s) - \nabla u_\varepsilon(s) \big)_{L^2(\OO)^d} \,\d s \\
  &\qquad+ \int_0^t \left( F_\lambda'(u_\lambda(s)) - F_\varepsilon'(u_\varepsilon(s)), 
   u_\lambda(s) - u_\varepsilon(s) \right)_H \,\d s \\
  &=  \frac{1}{2} \int_0^t \sum_{k=0}^\infty \int_\OO 
  | h_k(J_\lambda(u_\lambda(s))) - h_k(J_\varepsilon(u_\varepsilon(s))) |^2 \, \d s \\
  &\qquad+\int_0^t  \big( u_\lambda(s) - u_\varepsilon(s), (\H_\lambda(u_\lambda(s)) - \H_\varepsilon(u_\varepsilon(s)) )\,\d W(s) \big)_H \, .
\end{aligned}
\end{equation*}
Strong monotonicity of the p-Laplacian (see \cite[Lem.~2.1]{simon_p}) guarantees that
 \[ \big( \gamma_p(\nabla u_\lambda(s)) - \gamma_p(\nabla u_\varepsilon(s)),  \nabla u_\lambda(s) - \nabla u_\varepsilon(s) \big)_{L^2(\OO)^d} \geq c_p \| u_\lambda(s)-u_\varepsilon(s)\|_{V_p}^p. \]
Exploiting the monotonicity of $\beta_\lambda$, writing 
$u_\lambda = \lambda \beta_\lambda(u_\lambda) + J_\lambda(u_\lambda)$ 
(and the analogous for $u_\varepsilon$) and using Young inequality we get  
\begin{align*}
\left( \beta_\lambda(u_\lambda) - \beta_\varepsilon(u_\varepsilon),  u_\lambda - u_\varepsilon \right)_H &\geq \lambda  \| \beta_\lambda(u_\lambda) \|_H^2 + \varepsilon \| \beta_\varepsilon(u_\varepsilon)\|_H^2 
- (\varepsilon + \lambda)(\beta_\lambda(u_\lambda), \beta_\varepsilon(u_\varepsilon))_H \\
&\geq -\frac{(\lambda + \varepsilon)}{2} \left( \| \beta_\lambda(u_\lambda)\|_H^2 + \| \beta_\varepsilon(u_\varepsilon)\|^2_H \right),
\end{align*}
Since $F_\lambda'(u_\lambda) = \beta_\lambda(u_\lambda) - C_F u_\lambda $, the above estimate and \eqref{est2} entail
\begin{align*}
&\E\left(\int_0^t \left( F_\lambda'(u_\lambda(s)) - F_\varepsilon'(u_\varepsilon(s)),  u_\lambda(s) - u_\varepsilon(s) \right)_H \,\d s\right)^{\ell/2} \\
&\geq - K(\lambda + \varepsilon) 
- C_F\E \left(\int_0^t
 \| (u_\lambda - u_\varepsilon)(s) \|_H^2 \, \d s\right)^{\ell/2}\,.
 \end{align*}
To control the It\^o correction term and the stochastic integral it is useful to estimate 
\begin{align*}
&\| J_\lambda(u_\lambda) - J_\varepsilon(u_\varepsilon) \|_H^2 \leq 
\| J_\lambda(u_\lambda) - J_\lambda(u_\varepsilon) \|_H^2 + \| J_\lambda(u_\varepsilon) - J_\varepsilon(u_\varepsilon) \|_H^2 \\
&\leq \| u_\lambda - u_\varepsilon \|_H^2 + \| (J_\lambda(u_\varepsilon) - u_\varepsilon) - (J_\varepsilon(u_\varepsilon) - u_\varepsilon) \|_H^2 \\
&\leq \| u_\lambda - u_\varepsilon \|_H^2 + 2(\lambda^2 \|\beta_\lambda(u_\varepsilon)\|_H^2 + \varepsilon^2 \|\beta_\varepsilon(u_\varepsilon)\|_H^2)\\
&\leq \| u_\lambda - u_\varepsilon \|_H^2 + 2(\lambda^2 + \varepsilon^2)\| \beta_\varepsilon(u_\varepsilon) \|^2_H\,,
\end{align*}
where we used the non-expansivity of $J_\lambda$ and the fact that $|\beta_\lambda|$ is increasing as $\lambda \downarrow 0$.
Whence,  thanks to estimate \eqref{est2},  
\begin{align*}
&\frac{1}{2}\E \left(\int_0^t \sum_{k=0}^\infty \int_\OO | h_k(J_\lambda(u_\lambda(s))) - h_k(J_\varepsilon(u_\varepsilon(s))) |^2 \, \d s\right)^{\ell/2} \\
&\leq \frac{C_\cH}2\E\left( \int_0^t \| J_\lambda(u_\lambda(s)) - J_\varepsilon(u_\varepsilon(s)) \|_H^2 \, \d s\right)^{\ell/2} \\ 
&\lesssim \E\left( \int_0^t \| (u_\lambda - u_\varepsilon)(s) \|_H^2 \, \d s\right)^{\ell/2} + K(\lambda^2 + \varepsilon^2)
\end{align*}
and for every $\sigma >0$ there exists $M_\sigma$ independent of $\lambda$ such that 
\begin{align*}
\E &\sup_{r \in [0,t]} \left| \int_0^r  \big( u_\lambda(s) - u_\varepsilon(s), 
(\H_\lambda(u_\lambda(s)) - \H_\varepsilon(u_\varepsilon(s)) )\,\d W(s) \big)_H \right|^{\ell/2} \\
&\lesssim \E \left(\int_0^t \| u_\lambda(s) - u_\varepsilon(s) \|_H^2 \| \H_\lambda(u_\lambda(s)) - \H_\varepsilon(u_\varepsilon(s)) \|^2_{\cL^2(U,H)} \, \d s \right)^{\ell/4}\\
&\leq \sigma\E \left(\sup_{r \in [0,r]} \| u_\lambda(r) - u_\varepsilon(r) \|_H^2 \right)^{\ell/2}  + M_\sigma \E \left( \int_0^t \| J_\lambda(u_\lambda(s)) - J_\varepsilon(u_\varepsilon(s)) \|^2_{H} \, \d s \right)^{\ell/2}\\
&\leq \sigma\E \left(\sup_{r \in [0,t]} \| u_\lambda(r) - u_\varepsilon(r) \|_H^2 \right)^{\ell/2}  + M_\sigma \E \left(\int_0^t \| (u_\lambda - u_\varepsilon)(s) \|_H^2 \, \d s\right)^{\ell/2} +
 M_{\ell,\sigma}(\lambda^2 + \varepsilon^2)\,.
\end{align*}
Combining all the estimates, taking the supremum in time, power $\ell/2$, exploiting the arbitrariness of $\sigma >0$ and Gronwall's inequality we obtain
\begin{equation}\label{est3}
\norm{u_\lambda - u_\varepsilon}_{L^\ell_\cP(\Omega; C^0([0,T]; H))\cap 
  L^{p\ell/2}_\cP(\Omega; L^p(0,T; V_p))} \lesssim e^{CT} \left(\lambda + \varepsilon + \lambda^2 + \varepsilon^2 \right) \longrightarrow 0\, , \quad \text{ as } \lambda, \varepsilon \downarrow 0 \,.
\end{equation}  

\subsection{Existence of a solution}
We pass to the limit as $\lambda \downarrow 0$ to deduce the existence of  a variational solution in the form \eqref{variational_sol}.
From \eqref{est3} we deduce that $(u_\lambda)_{\lambda}$ is a Cauchy sequence so that  
\begin{equation}\label{conv1}
u_\lambda \to u \qquad \text{ in } L^\ell_\cP(\Omega; C^0([0,T]; H))\cap 
  L^{p\ell/2}_\cP(\Omega; L^p(0,T; V_p)) \,.
\end{equation}
Since the operator $-\Delta_p: V_p \to V_p^*$ is demicontinuous, from \eqref{conv1} it follows that 
\begin{equation}\label{conv2}
-\Delta_pu_\lambda \wto -\Delta_pu \qquad \text{ in } L^q_\cP(\Omega; L^q(0,T; V_p^*))\,. 
\end{equation} 
The uniform bound \eqref{est2} further guarantees that, up to extracting a subsequence $(\lambda_k)_k$,  
\begin{equation*}
F_{\lambda_k}'(u_{\lambda_k}) \wto \xi \qquad \text{ in } L^\ell_\cP(\Omega; L^2(0,T; H))\,, 
\end{equation*}
whence $\beta_{\lambda_k}(u_{\lambda_k}) \wto \xi + C_F u$ in $L^2_\cP(\Omega; L^2(0,T; H))$.
It remains to show that $\xi = F'(u)$. 
Noting that $J_\lambda(u_\lambda)\to u$ in $L^2_\cP(\Omega; L^2(0,T; H))$
by definition of Yosida approximation and the bound \eqref{est2},
the weak-strong closure of the maximal monotone graph $\beta$ (see \cite[Ch.~2]{barbu-monot})
yields that $\xi+ C_Fu=\beta(u)$, hence in particular that 
\begin{equation}\label{conv3}
F_{\lambda_k}'(u_{\lambda_k}) \wto F'(u) \qquad \text{ in } L^2_\cP(\Omega; L^2(0,T; H))\,.
\end{equation}
Notice that the regularity $F'(u) \in L^2(\Omega; L^2(0,T; H))$ readily implies the bound 
\begin{equation*}
\| u \|_{L^\infty(\OO)} \leq 1\qquad \text{ for a.e.~}t \in [0,T]\,, \quad\P\text{-a.e.}
\end{equation*}
Moreover, the BDG inequality and the Lipschitz character of $\H$ yield the convergence of the stochastic integrals
\begin{equation*}
\E \sup_{t \in [0,T]}\left| \int_0^t \big( (\H_\lambda(u_\lambda(s)) - \H(u(s)) )  dW(s)\big)_H \right|^2 \lesssim \| J_\lambda(u_\lambda) - u \|^2_{L^2_\cP(\Omega; L^2(0,T); H))} \to 0\,,
\end{equation*} 
so that 
\begin{equation}\label{conv4}
\int_0^\cdot \H_\lambda(u_\lambda(s))dW(s) \to \int_0^\cdot \H(u(s))dW(s)\,, \qquad \text{ in } L^2(\Omega; C^0([0,T];H)\,. 
\end{equation} 
Combining the convergences in \eqref{conv1}-\eqref{conv4} we can pass to the limit in the approximated equation along the subsequence $(\lambda_k)_k$ with $k \to +\infty$.
Therefore, we conclude that 
$u$ is a variational solution of the Allen-Cahn equation in the sense of Theorem~\ref{thm1}.

\subsection{Uniqueness of solutions}
Let $u^1_{0}, u^2_0 \in H$ be two different initial data and denote with $u_1, u_2 \in L^p_\cP(\Omega; C^0([0,T]; H))\cap L^p_\cP(\Omega; L^p(0,T; V_p))$ the associated solutions of \eqref{variational_sol}.
Following the same strategy as in the proof of the third estimate it is easy to show that 
\begin{equation}\label{cont_dip}
\|u_1- u_2\|_{L^p_\cP(\Omega; C^0([0,T]; H))\cap L^p_\cP(\Omega; L^p(0,T; V_p))} \lesssim \| u_0^1 - u_0^2\|_{L^2(\Omega;H)}
\end{equation} 
Choosing $u_0^1 = u_0^2$, this readily implies pathwise  uniqueness of solutions to equation \eqref{variational_sol} under Assumptions {\bf A-B-C}.

\subsection{Refined existence}
Let us suppose now that $u_0 \in V_p$.
Define the map $\Phi: V_p \to \R$ as $\Phi_p(u):= \frac{1}{p}\|\nabla u\|_{L^p(\OO)}^p$. The first and second Fr\'echet derivative of $\Phi_p$ are given by
\begin{align*}
D\Phi_p(u)[y] &= \int_\OO \gamma_p(\nabla u) \cdot \nabla y \\
D^2\Phi_p(u)[y_1,y_2] &= (p-1)\int_\OO |\nabla u|^{p-2}\nabla y_1 \cdot \nabla y_2 \,,
\end{align*}
for every $u,y,y_1,y_2 \in V_p$.
Notice that $\|\nabla u\|_{L^p(\OO)}^p = \|\gamma_p(\nabla u)\|_{L^q(\OO)}^q$, where $q = \frac{p}{p-1}$.
The additional assumption $u_0\in V_p$
guarantees (see for example \cite{gess}) that 
$u_\lambda\in L^\infty(0,T; L^1(\Omega; V_p))$ and
$-\Delta_p u_\lambda\in L^2_\cP(\Omega; L^2(0,T; H))$.
Consequently, It\^o formula for $\Phi_p(u_\lambda)$
(see \cite[Prop.~3.3]{SS-doubly2}) yields after some easy manipulations
\begin{align*}
&\frac{1}{p}\int_\OO |\nabla u_\lambda(t)|^p + 
\int_{Q_t}|\Delta_pu_\lambda|^2  + 
\int_{Q_t}F_\lambda''(u_\lambda) |\nabla u_\lambda|^p = \frac{1}{p}\int_\OO |\nabla u_0|^p \\
&+ (p-1)\sum_{k=0}^\infty \int_{Q_t} |\nabla u_\lambda|^p |J_\lambda'(u_\lambda)|^2 |h_k'(J_\lambda(u_\lambda))|^2 + \int_0^t \big( \gamma_p(\nabla u_\lambda(s)) , \nabla\H_\lambda(u_\lambda(s)) \, \d W(s)\big)_{L^2(\OO)^d}\,.
\end{align*}
The third term on the l.h.s can be estimated as usual ($\beta_\lambda'(u_\lambda) \geq 0$) 
\begin{equation*}
\int_{Q_t} |\nabla u_\lambda|^p F_\lambda''(u_\lambda)
 \geq -C_F \int_{Q_t} |\nabla u_\lambda|^p\,.
\end{equation*}
Thanks to Assumption {\bf C} and the non-expansive character of $J_\lambda$ 
the trace term can be estimated as 
\begin{equation*}
\sum_{k=0}^\infty \int_{Q_t} |\nabla u_\lambda|^p |J_\lambda'(u_\lambda)|^2 |h_k'(J_\lambda(u_\lambda))|^2 \leq C_{\H} \int_{Q_t} |\nabla u_\lambda|^p\,.
\end{equation*}
Concerning the stochastic integral we have
\begin{align*}
\E &\sup_{t \in [0,T]} \left| \int_0^t \big( \gamma_p(\nabla u_\lambda(s)) , 
\nabla\H_\lambda(u_\lambda(s)) \, \d W(s)\big)_{L^2(\OO)^d} \right| \\
&= \E \sup_{t \in [0,T]} \left| \int_0^t \big( |\nabla u_\lambda(s)|^{\frac{p-2}{2}}
\nabla u_\lambda(s) , |\nabla u_\lambda(s)|^{\frac{p-2}{2}}
\nabla\H_\lambda(u_\lambda(s)) \, \d W(s)\big)_{L^2(\OO)^d} \right| \\
&\leq \E \left( \sup_{t \in [0,T]}\| |\nabla u_\lambda(t)|^{\frac{p-2}{2}}\nabla u_\lambda(t) \|_H^2
 \int_0^T \| |\nabla u_\lambda(s)|^{\frac{p-2}{2}}
 \nabla\H_\lambda(u_\lambda(s)) \|_{\cL^2(U,H)}^2\right)^{1/2}\\
&\leq \sigma \E \sup_{t \in [0,T]} \int_\OO|\nabla u_\lambda(t)|^p + 
M_\sigma \E \sum_{k=1}^\infty \int_{Q}
|\nabla u_\lambda|^p |h_k'(J_\lambda(u_\lambda))|^2 \\
&\leq \sigma \E \sup_{t \in [0,T]} \int_\OO|\nabla u_\lambda(t)|^p 
+ M_\sigma C_{\H}\E \int_{Q} |\nabla u_\lambda|^p
\end{align*}
Taking the supremum in time and expectation, the arbitrariness of $\sigma >0$ and 
the estimate \eqref{est1} yield
\begin{equation}\label{est4}
\|u_\lambda\|_{L^p_w(\Omega; L^\infty(0,T; V_p))}
+\|\Delta_p u_\lambda\|_{L^2_\cP(\Omega; L^2(0,T; H))}\leq K\,.
\end{equation}
Once the estimate \eqref{est4} is established, the proof of Theorem \ref{thm1+} relies on standard arguments.
Indeed, one readily gets that 
\[
  u_\lambda \wstarto u \qquad\text{in } L^p_w(\Omega; L^\infty(0,T; V_p))
\]
 and
\[
  -\Delta_p u_\lambda \wto -\Delta_p u
  \qquad\text{in } L^2_\cP(\Omega; L^2(0,T; H))\,.
\]
Hence $u$ belongs to  $L^p_w(\Omega; L^\infty(0,T; V_p))$ with 
$\Delta_p u \in L^2_\cP(\Omega; L^2(0,T; H_{\div}))$ and satisfies \eqref{var_sol+}.
This proves Theorem~\ref{thm1+}.

\section{Separation property}
\label{sec:proof2}
We present here the proof of Theorem~\ref{thm2}, which 
 is divided in two main steps. In the first step we prove that,
almost surely in $\Omega$, it holds $|u(t, x)| < 1$ for every $(t, x) \in (0, T) \times \OO$,
namely that $u$ cannot touch the barriers $\pm1$ at any point in space and time.
Afterwards, in the second step, we show that, for almost every $\omega \in \Omega$
it exists a threshold $\delta = \delta(\omega) > 0$
 such that $\|u(\omega)\|_{L^{\infty}(Q)} \leq 1 - \delta(\omega)$. 
In order to show this, we will rely on suitable pathwise estimates
and H\"older regularity of the solution. Throughout the section, 
the assumptions of Theorem~\ref{thm2} are in order.

\subsection{Pathwise estimates}
Given $\varsigma\in\mathbb{N} \setminus \{0\}$ 
as in the assumption {\bf D}, 
in order to ``measure'' how much $u$ is concentrated around $\pm1$
we use the following convex function 
\begin{equation}\label{Gs}
G_\varsigma : (-1,1) \to \mathbb{R}\,, \qquad
G_\varsigma (r) := \frac{1}{(1-r^2)^\varsigma}\,, \quad r \in \mathbb{R}\,,
\end{equation}
and also the corresponding functional
\begin{equation*}
\mathcal{G}_\varsigma: H \to [0,+\infty]\,,\qquad
\mathcal{G}_\varsigma(v) :=
\begin{cases}
 \displaystyle
 \int_{\OO} G_\varsigma(v) \quad&\text{if } G_\varsigma(v) \in L^1(\OO)\,,\\
 +\infty \quad&\text{otherwise}\,.
 \end{cases}
\end{equation*}
A direct computation shows that $G_\varsigma', G_\varsigma'':(-1,1)\to\erre$ are given by
\[
  G_\varsigma'(r)=2\varsigma\frac{r}{(1-r^2)^{\varsigma+1}}\,,
  \qquad
  G''_\varsigma(r)=2\varsigma\frac{(2\varsigma+1)r^2+1}{(1-r^2)^{\varsigma+2}}\,,
  \qquad r\in(-1,1)\,.
\]

\begin{lem}
  \label{lem:path1}
  There exists $K_1>0$, only depending of $\varsigma$, $p$, $T$, $\OO$, $F$,
  $\H$, and $u_0$,
  such that 
  \beq\label{est_Gs}
  \sup_{t\in[0,T]}\int_\OO G_\varsigma(u(t))
  \leq K_1 + \sup_{t\in[0,T]}\int_0^t
  \left(G'_\varsigma(u(s)), \H(u(s))\,\d W(s)\right)_H<+\infty
  \qquad\P\text{-a.s.}
  \eeq
  where the stochastic integral is well-defined as
  $(G'_\varsigma(u),\H(u)\cdot)_H\in L^2_\cP(\Omega; L^2(0,T; \cL^2(U,\erre)))$.
\end{lem}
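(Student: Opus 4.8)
The natural strategy is to apply It\^o's formula to the functional $\mathcal{G}_\varsigma$ along the approximated solutions $u_\lambda$ of Section~\ref{sec:proof1}, derive a $\lambda$-uniform pathwise bound, and then pass to the limit. Since $G_\varsigma$ is smooth on $(-1,1)$ but its derivatives blow up at $\pm1$, I would first work with a truncated/regularized version $G_\varsigma^{(n)}$ (e.g.\ replacing $G_\varsigma$ by a $C^2$ function that agrees with it on $[-1+1/n,1-1/n]$ and grows linearly outside), so that $\mathcal{G}_\varsigma^{(n)}$ is Fr\'echet-$C^2$ on $V_p$ with bounded derivatives and the It\^o formula of \cite[Prop.~3.3]{SS-doubly2} applies to $\mathcal{G}_\varsigma^{(n)}(u_\lambda)$. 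Using the refined regularity $\Delta_p u_\lambda\in L^2_\cP(\Omega;L^2(0,T;H))$ from \eqref{est4}, this yields, for every $t$, $\P$-a.s.,
\begin{align*}
\int_\OO G_\varsigma^{(n)}(u_\lambda(t))
&+ \int_{Q_t}G_\varsigma^{(n)\prime}(u_\lambda)\bigl(F_\lambda'(u_\lambda)-\Delta_pu_\lambda\bigr)\\
&= \int_\OO G_\varsigma^{(n)}(u_0)
+ \frac12\int_0^t\sum_{k}\int_\OO G_\varsigma^{(n)\prime\prime}(u_\lambda)\,|h_k(J_\lambda(u_\lambda))|^2\,\d s
+ \int_0^t\bigl(G_\varsigma^{(n)\prime}(u_\lambda),\H_\lambda(u_\lambda)\,\d W\bigr)_H\,.
\end{align*}

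The heart of the matter is to show that the two deterministic integrals on the left behave favourably. For the diffusion term I would use $-\int_\OO G_\varsigma^{(n)\prime}(u_\lambda)\Delta_pu_\lambda = \int_\OO G_\varsigma^{(n)\prime\prime}(u_\lambda)|\nabla u_\lambda|^{p-2}|\nabla u_\lambda|^2\ge 0$ after integration by parts, since $G_\varsigma''\ge0$; this term is simply discarded. For the drift term, the key structural fact is that $G_\varsigma'(r)F'(r)\ge -C$ for $r$ near $\pm1$: indeed $F'(r)\to\pm\infty$ with the \emph{same sign} as $r$ (Assumption~{\bf B}), so $G_\varsigma'(r)F'(r) = 2\varsigma\,r F'(r)/(1-r^2)^{\varsigma+1}\ge0$ once $|r|\ge\max\{|r_F|,|R_F|\}$, and it is bounded below on the compact complement; hence $\int_{Q_t}G_\varsigma^{(n)\prime}(u_\lambda)F_\lambda'(u_\lambda)\ge -CT|\OO|$ uniformly in $\lambda,n$ (using the monotone convergence $F_\lambda'\to F'$ and the sign structure carried over to the Yosida approximation). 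For the It\^o correction, Assumption~{\bf D} is exactly what is needed: writing $h_k\in W_0^{\varsigma+2,\infty}(-1,1)$ with $h_k$ and its first $\varsigma+1$ derivatives vanishing at $\pm1$, a Taylor/Hardy-type estimate gives $|h_k(r)|^2\le C\|h_k\|_{W^{\varsigma+2,\infty}}^2(1-r^2)^{\varsigma+2}$, so that $G_\varsigma''(r)|h_k(r)|^2 = 2\varsigma\frac{(2\varsigma+1)r^2+1}{(1-r^2)^{\varsigma+2}}|h_k(r)|^2$ is bounded uniformly in $r$, and summing over $k$ gives a bound by $C_{\H,\varsigma}^2$; this controls the correction term by $K_1$ after also using $|J_\lambda(u_\lambda)|\le1$.

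Putting these together yields $\sup_t\int_\OO G_\varsigma^{(n)}(u_\lambda(t))\le K_1 + \sup_t\int_0^t(G_\varsigma^{(n)\prime}(u_\lambda),\H_\lambda(u_\lambda)\,\d W)_H$ with $K_1$ independent of $\lambda$ and $n$. I would then let $n\to\infty$ using Fatou on the left (since $G_\varsigma^{(n)}\uparrow G_\varsigma$) and dominated convergence on the right-hand stochastic integral (the integrands $(G_\varsigma^{(n)\prime}(u_\lambda),\H_\lambda(u_\lambda)\cdot)_H$ are dominated in $L^2_\cP(\Omega;L^2(0,T;\cL^2(U,\erre)))$ by the same bound just derived, which also establishes the claimed integrability and in particular that $\int_\OO G_\varsigma(u_\lambda(t))<+\infty$). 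Finally, along the subsequence $\lambda_k\downarrow0$ from Section~\ref{sec:proof1} with $u_{\lambda_k}\to u$ in $L^p_\cP(\Omega;C^0([0,T];H))$, I pass to the limit: lower semicontinuity of $\mathcal{G}_\varsigma$ on $H$ handles the left side, and $H$-convergence plus the Lipschitz/boundedness of $\H$ and the uniform $L^2$-bound on $G_\varsigma'(u_{\lambda_k})$ (which forces $G_\varsigma'(u_{\lambda_k})\rightharpoonup G_\varsigma'(u)$, using the maximal monotone graph closure as in Section~\ref{sec:proof1}) give convergence of the stochastic integral, yielding \eqref{est_Gs}. The main obstacle is the uniform control of the It\^o correction term: everything hinges on the sharp vanishing rate $(1-r^2)^{\varsigma+2}$ of $|h_k|^2$ forced by Assumption~{\bf D}, and on justifying that this pointwise bound survives summation over $k$ and the Yosida regularization uniformly.
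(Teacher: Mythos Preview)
Your overall strategy---regularize $G_\varsigma$, apply It\^o, discard the convex diffusion term, use the sign structure of $G_\varsigma'F'$ for the drift, and exploit Assumption~{\bf D} via a Taylor bound for the correction---matches the paper's. The gap is your choice to work with the approximated solutions $u_\lambda$ rather than with the true solution $u$. In the It\^o correction the second derivative is evaluated at $u_\lambda$ while the noise coefficient is $h_k(J_\lambda(u_\lambda))$, so on $\{|u_\lambda|\le1-1/n\}$ you must control
\[
  G_\varsigma^{(n)\prime\prime}(u_\lambda)\,|h_k(J_\lambda(u_\lambda))|^2
  \;\lesssim\;
  \frac{(1-J_\lambda(u_\lambda)^2)^{\varsigma+2}}{(1-u_\lambda^2)^{\varsigma+2}}\,
  \|h_k\|^2_{W^{\varsigma+2,\infty}}\,.
\]
Since $J_\lambda(0)=0$ and $J_\lambda$ is non-expansive, one has $|J_\lambda(u_\lambda)|\le|u_\lambda|$, hence $1-J_\lambda(u_\lambda)^2\ge1-u_\lambda^2$: the ratio above is $\ge1$ and admits no bound independent of $n$ (it is of order $n^{\varsigma+2}$ in the worst case). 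Your remark ``after also using $|J_\lambda(u_\lambda)|\le1$'' does not resolve this: the point is precisely that $J_\lambda(u_\lambda)$ can sit \emph{farther} from $\pm1$ than $u_\lambda$, so the decay of $|h_k|^2$ no longer cancels the blow-up of $G_\varsigma''$. A related but milder issue affects the drift term, where $F_\lambda'$ need not inherit the exact sign structure of $F'$ uniformly in $\lambda$, and $u_\lambda$ is not confined to $(-1,1)$.

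The paper sidesteps both problems by applying It\^o directly to the strong solution $u$ of Theorem~\ref{thm1+} (for which $|u|\le1$ and the noise is $\H(u)$, so the two arguments coincide) and by regularizing $G_\varsigma$ through the Yosida approximation $G_{\varsigma,\lambda}$ of the monotone function $G_\varsigma'$. That choice yields the clean comparison $G''_{\varsigma,\lambda}(u)\le G''_\varsigma(u)$ via the resolvent contraction, after which the Taylor bound $|h_k(u)|^2\lesssim(1-u^2)^{\varsigma+2}$ cancels the singularity exactly. Your truncation $G_\varsigma^{(n)}$ would work equally well \emph{if applied to $u$ rather than $u_\lambda$}; the essential point is that the regularization should act on $G_\varsigma$ only, not simultaneously on the equation.
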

\begin{proof}[Proof of Lemma~\ref{lem:path1}]
Ideally, we would like to write It\^o's formula for $\mathcal{G}_\varsigma(u)$:
however, since $G'_\varsigma$ is not Lipschitz continuous and $G''_\varsigma$ is not bounded, we cannot relay on standard It\^o's formulas in the variational setting. Hence, we use an approximation of the function $G_\varsigma$ in order to recover such regularity. For example, since $G'_\varsigma$ is monotone increasing and continuous, then we can identify it with a maximal monotone graph. Therefore, for every $\lambda \in (0, 1)$, we can consider its Yosida approximation $G'_{\varsigma, \lambda}: \mathbb{R} \to \mathbb{R}$ and define 
$G_{\varsigma, \lambda}: \mathbb{R}\to\mathbb{R}$ as
\begin{equation*}
G_{\varsigma, \lambda}(r):=1+ \int_0^r G'_{\varsigma, \lambda}(y) \,\d y\,, \quad r \in \mathbb{R}\,.
\end{equation*}
In this way we have that $G_{\varsigma, \lambda} \in C^2(\mathbb{R})$
is exactly the Moreau-Yosida regularisation of $G_\varsigma$, and 
 is such that $G'_{\varsigma, \lambda}$ is Lipschitz-continuous and 
 $G''_{\varsigma, \lambda}$ is continuous and bounded. Therefore, letting
\begin{equation*}
\mathcal{G}_{\varsigma, \lambda}: H \to \mathbb{R}\,, \qquad
\mathcal{G}_{\varsigma, \lambda}(v) := \int_D G_{\varsigma, \lambda}(v)\,, \quad v \in H\,,
\end{equation*}
we can write It\^o's formula for $\mathcal{G}_{\varsigma, \lambda}(u)$ and then,
by passing to the limit for $\lambda \downarrow 0$, we can recover an It\^o's inequality for 
$\mathcal{G}_{\varsigma}(u)$ by lower semicontinuity. Indeed, for every 
$t \in [0, T]$, $\P$-almost surely it holds that 
\begin{equation*}
\begin{aligned}
  &\int_\OO G_{\varsigma,\lambda}(u(t)) +
   \int_{Q_t}G''_{\varsigma,\lambda}(u)|\nabla u|^p
  +\int_{Q_t}G_{\varsigma,\lambda}'(u) F'(u)\\
  &=\int_\OO G_{\varsigma,\lambda}(u_0) + \frac12\int_0^t\sum_{k=0}^\infty
  \int_\OO G''_{\varsigma,\lambda}(u(s))|h_k(u(s))|^2\,\d s
  + \int_0^t\left(G_{\varsigma,\lambda}'(u(s)),\H(u(s))\,\d W(s)\right)_H\,.
\end{aligned}
\end{equation*}
Now, the second term on the left-hand side is nonnegative 
by convexity of $G_{\varsigma, \lambda}$.
For the third term on the left-hand side, by assumption {\bf B},
by monotonicity,
and the fact that $G'_{\varsigma,\lambda}(0)=0$ we have that 
\begin{align*}
&\int_{Q_t} G'_{\varsigma,\lambda}(u) F'(u)  \\
&=\int_{Q_t\cap\{u\in(-1,r_F)\}}
G'_{\varsigma,\lambda}(u) F'(u) 
+\int_{Q_t\cap\{u\in[r_F,R_F]\}}
G'_{\varsigma,\lambda}(u) F'(u) +\int_{Q_t\cap\{u\in(R_F,1)\}}
G'_{\varsigma,\lambda}(u) F'(u)  \\
& \geq \int_{Q_t\cap\{u\in[-r_F,R_F]\}}
G'_{\varsigma,\lambda}(u) F'(u)\geq
 - |Q|\max_{r\in[r_F,R_F]}|G_\varsigma'(r)|\max_{r\in[r_F,R_F]}|F'(r)|\,.
\end{align*}
Furthermore, on the right-hand side we have by assumption on the initial datum that 
\[
\int_\OO G_{\varsigma,\lambda}(u_0)\leq \int_\OO G_{\varsigma}(u_0)
\leq \frac{|\OO|}{\delta_0^{2\varsigma}}\,.
\]
Putting together this information we deduce that 
\begin{equation*}
\begin{aligned}
  \int_\OO G_{\varsigma,\lambda}(u(t)) 
  &\leq \frac{|\OO|}{\delta_0^{2\varsigma}} +
   |Q|\max_{r\in[r_F,R_F]}|G_\varsigma'(r)|\max_{r\in[r_F,R_F]}|F'(r)|\\
 &+ \frac12\int_0^t\sum_{k=0}^\infty
  \int_\OO G''_{\varsigma,\lambda}(u)|h_k(u)|^2
  + \int_0^t\left(G_{\varsigma,\lambda}'(u(r)),\H(u(r))\,\d W(r)\right)_H\,.
\end{aligned}
\end{equation*}
Let us handle the trace term. To this end, we exploit assumption {\bf D}:
indeed, for every $k \in \mathbb{N}$, thanks to Taylor's Theorem with integral reminder,
 it holds for every $r\in(-1,1)$ that
\begin{align*}
h_k(r) &= \sum_{j = 0}^{\varsigma+1} 
\frac{h_k^{(j)}(1)}{j!}
 (r-1)^{j} - \frac1{(\varsigma+1)!}\int_r^1 
 h^{(\varsigma+2)}_k(y)(r-y)^{\varsigma+1} \, \d y\,,\\
 h_k(r) &= \sum_{j = 0}^{\varsigma+1} 
\frac{h_k^{(j)}(-1)}{j!}
 (r+1)^{j} + \frac1{(\varsigma+1)!}\int_{-1}^r 
 h^{(\varsigma+2)}_k(y)(r-y)^{s+1} \, \d y\,.
\end{align*}
Recalling assumption {\bf D}, we have for every $r\in(-1,1)$ and $k\in\enne$ that 
\begin{align*}
  |h_k(r)| &\leq \frac{\|h_k\|_{W^{\varsigma+2, \infty}(-1, 1)}}{(\varsigma+2)!} |r-1|^{\varsigma+2}\,,\\
  |h_k(r)| &\leq \frac{\|h_k\|_{W^{\varsigma+2, \infty}(-1, 1)}}{(\varsigma+2)!} |r+1|^{\varsigma+2}\,.
\end{align*}
Now, the non-expansivity of the resolvent of $G_\varsigma'$
and the fact that $r\mapsto G_\varsigma''(|r|)$, $r\in(-1,1)$, is non-increasing
imply that $G''_{\varsigma,\lambda}(u)\leq G''_\varsigma(u)$.
Therefore, exploiting the form of $G''_\varsigma$
and the fact that $|u|\leq1$ almost everywhere, we infer that
\begin{align*}
&\frac{1}{2} \int_0^t  \sum_{k=0}^{\infty} \int_{\OO} 
G''_{\varsigma,\lambda}(u(s)) |h_k(u(s))|^2\,\d s \\
& \leq 2\varsigma(\varsigma+1)
\int_0^t \sum_{k=0}^{\infty} \int_{\OO} 
\frac{|h_k(u(s))|^2}{(1-u(s)^2)^{\varsigma+2}}\,\d s \\
& \leq \frac{2\varsigma(\varsigma+1)}{(\varsigma+2)!^2} 
\sum_{k=0}^{\infty} \|h_k\|^2_{W^{\varsigma+2, \infty}(-1, 1)}
\int_{Q_t} \left( \frac{|u-1| |u+1|}{|u-1| |u+1|} \right)^{\varsigma+2} \\
& \leq \frac{2\varsigma(\varsigma+1)}{(\varsigma+2)!^2} 
C^2_{\H, \varsigma} |Q|\,.
\end{align*}
Consequently, by choosing
\beq\label{K1}
  K_1:=\frac{|\OO|}{\delta_0^{2\varsigma}} +
   |Q|\max_{r\in[r_F,R_F]}|G_\varsigma'(r)|\max_{r\in[r_F,R_F]}|F'(r)|
   +\frac{2\varsigma(\varsigma+1)}{(\varsigma+2)!^2} 
  C^2_{\H, \varsigma} |Q|\,,
\eeq
we get exactly 
\beq\label{aux_path1}
  \int_\OO G_{\varsigma,\lambda}(u(t))
  \leq K_1 + \int_0^t
  \left(G'_{\varsigma,\lambda}(u(s)), \H(u(s))\,\d W(s)\right)_H
  \qquad\forall\,t\in[0,T]\,,\quad\P\text{-a.s.}
\eeq
Now, by the BDG inequality we have 
\begin{align*}
  &\E\sup_{r\in[0,t]}\int_0^r\left(G'_{\varsigma,\lambda}(u(s)), \H(u(s))\,\d W(s)\right)_H
  \leq C\E\left(\sum_{k=0}^\infty\int_{Q_t}
  G'_{\varsigma,\lambda}(u)^2|h_k(u)|^2\right)^{1/2}\,,
\end{align*}
where
\begin{align*}
  G'_{\varsigma,\lambda}(u)^2|h_k(u)|^2
  &\leq 4\varsigma^2\frac{|h_k(u)|^2}{(1-u^2)^{2\varsigma+2}}\\
  &\leq \frac{4\varsigma^2}{(\varsigma+2)!^2}\|h_k\|^2_{W^{\varsigma+2,\infty}(-1,1)}
  \frac{|u-1|^{\varsigma+2}|u+1|^{\varsigma+2}}{|u-1|^{2\varsigma+2}|u+1|^{2\varsigma+2}}\\
  &\leq \frac{4\varsigma^2}{(\varsigma+2)!^2}\|h_k\|^2_{W^{\varsigma+2,\infty}(-1,1)}
  \frac1{|u-1|^\varsigma|u+1|^\varsigma}\\
  &= 
  \frac{4\varsigma^2}{(\varsigma+2)!^2}\|h_k\|^2_{W^{\varsigma+2,\infty}(-1,1)}
  G_\varsigma(u)\,.
\end{align*}
It follows that 
\[
  \E\sup_{r\in[0,t]}\int_\OO G_{\varsigma,\lambda}(u(r))
  \leq K_1 + C\frac{\varsigma}{(\varsigma+2)!}C_{\H,\varsigma}
  \left(1+
  \E\int_{Q_t}G_{\varsigma,\lambda}(u)\right)
\]
and the Gronwall Lemma implies the existence of a constant $C>0$ independent 
of $\lambda$ such that 
\[
  \E\sup_{t\in[0,T]}\norm{G_{\varsigma,\lambda}(u)}_{L^1(\OO)}\leq C\,,
\]
yielding by the Fatou Lemma that 
\[
  \E\sup_{t\in[0,T]}\norm{G_{\varsigma}(u)}_{L^1(\OO)}\leq C\,.
\]
Now, this information together with analogous computations as above imply that 
\[
  \E\sum_{k=0}^\infty\int_QG'_{\varsigma}(u)^2|h_k(u)|^2 <+\infty\,,
\]
or in other words that $G_\varsigma(u)\H(u)\in L^2_\cP(\Omega; L^2(0,T; \cL^2(U,\erre)))$.
Eventually, 
the BDG inequality implies 
\begin{align*}
  &\E\sup_{r\in[0,t]}\int_0^r\left(G'_{\varsigma,\lambda}(u(s))-G_\varsigma'(u(s)), 
  \H(u(s))\,\d W(s)\right)_H\\
  &\leq C\E\left(\sum_{k=0}^\infty\int_{Q_t}
  |G'_{\varsigma,\lambda}(u)-G'_\varsigma(u)|^2|h_k(u)|^2\right)^{1/2}\,.
\end{align*}
Since $G'_{\varsigma,\lambda}(u)\to G_\varsigma'(u)$ almost everywhere,
the right-hand side goes to $0$ by the dominated convergence theorem 
since $G_\varsigma(u)\H(u)\in L^2_\cP(\Omega; L^2(0,T; \cL^2(U,\erre)))$.
Hence, we can let $\lambda\downarrow0$ in \eqref{aux_path1}
and deduce exactly \eqref{est_Gs}: this concludes the proof.
\end{proof}

\begin{lem}
  \label{lem:path2}
  There exists $K_2>0$, 
  only depending of $\varsigma$, $p$, $T$, $\OO$, $F$, $\H$, and $u_0$,
  such that 
  \beq\label{est_Vp}
  \sup_{t\in[0,T]}\norm{u(t)}_{V_p}^p
  +\int_Q|\Delta_p u|^2
  \leq K_2 + K_2\sup_{t\in[0,T]}\int_0^t
  \left(-\Delta_pu(s), \H(u(s))\,\d W(s)\right)_H
  \qquad\P\text{-a.s.}
  \eeq
  where the stochastic integral is well-defined as
  $(\Delta_pu,\H(u)\cdot)_H\in L^2_\cP(\Omega; L^2(0,T; \cL^2(U,\erre)))$.
\end{lem}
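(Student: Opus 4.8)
The plan is to reproduce the energy estimate of the ``Refined existence'' part of Section~\ref{sec:proof1}, but keeping the martingale term and closing the estimate by a \emph{pathwise} Gronwall argument. First I would apply It\^o's formula for the convex functional $\Phi_p$ (in the form of \cite[Prop.~3.3]{SS-doubly2}, already invoked in Section~\ref{sec:proof1}) to the solution $u$ of \eqref{var_sol+}, which by Theorem~\ref{thm1+} enjoys $u\in L^p_w(\Omega;L^\infty(0,T;V_p))$ and $\Delta_p u\in L^2_\cP(\Omega;L^2(0,T;H))$. Using $D\Phi_p(u)[v]=(-\Delta_p u,v)_H$ for $v\in V_p$ (so that the drift $\Delta_p u-F'(u)$ contributes $-\|\Delta_p u\|_H^2+(\Delta_p u,F'(u))_H$, and $(\Delta_p u,F'(u))_H=-\int_\OO F''(u)|\nabla u|^p$ by the chain rule, the boundary term vanishing since $F'(0)=0$), together with the It\^o correction $D^2\Phi_p(u)[h_k'(u)\nabla u,h_k'(u)\nabla u]=(p-1)\int_\OO|\nabla u|^p|h_k'(u)|^2$, one arrives at the identity, valid $\P$-a.s.\ for every $t\in[0,T]$,
\begin{equation*}
\begin{aligned}
\frac1p\|u(t)\|_{V_p}^p+\int_{Q_t}|\Delta_p u|^2+\int_{Q_t}F''(u)|\nabla u|^p
&=\frac1p\|u_0\|_{V_p}^p+\frac{p-1}{2}\sum_{k=0}^\infty\int_{Q_t}|\nabla u|^p|h_k'(u)|^2\\
&\quad-\int_0^t\big(\Delta_p u(s),\H(u(s))\,\d W(s)\big)_H\,,
\end{aligned}
\end{equation*}
the stochastic integral being meaningful because $\|(\Delta_p u,\H(u)\cdot)_H\|_{\cL^2(U,\erre)}^2\leq|\OO|C_\H^2\|\Delta_p u\|_H^2$, hence $(\Delta_p u,\H(u)\cdot)_H\in L^2_\cP(\Omega;L^2(0,T;\cL^2(U,\erre)))$ by Theorem~\ref{thm1+}.

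Next I would bound the lower-order terms exactly as in Section~\ref{sec:proof1}: by Assumption~{\bf B}, $F''\geq-C_F$, so that (even though $F''(u)$ may itself be unbounded, only its lower bound is used) $\int_{Q_t}F''(u)|\nabla u|^p\geq-C_F\int_{Q_t}|\nabla u|^p$; and by Assumption~{\bf C} (or~{\bf D}), $\sum_k|h_k'(u)|^2\leq C_\H^2$, so $\tfrac{p-1}{2}\sum_k\int_{Q_t}|\nabla u|^p|h_k'(u)|^2\leq\tfrac{p-1}{2}C_\H^2\int_{Q_t}|\nabla u|^p$. Inserting these bounds and setting $c:=C_F+\tfrac{p-1}{2}C_\H^2$ yields
\begin{equation*}
\frac1p\|u(t)\|_{V_p}^p+\int_{Q_t}|\Delta_p u|^2\leq\frac1p\|u_0\|_{V_p}^p+c\int_0^t\|u(s)\|_{V_p}^p\,\d s-\int_0^t\big(\Delta_p u(s),\H(u(s))\,\d W(s)\big)_H
\end{equation*}
for all $t$, $\P$-a.s. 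Writing $a(t):=\tfrac1p\|u(t)\|_{V_p}^p$ and $m:=\sup_{t\in[0,T]}\big(-\int_0^t(\Delta_p u(s),\H(u(s))\,\d W(s))_H\big)\ (\geq0)$, this gives $a(t)\leq a(0)+pc\int_0^t a(s)\,\d s+m$; the deterministic Gronwall lemma then produces $\sup_{[0,T]}a\leq e^{pcT}(a(0)+m)$, and inserting this back into the displayed inequality controls $\int_Q|\Delta_p u|^2$ as well. Since $a(0)=\tfrac1p\|u_0\|_{V_p}^p$ is a constant, one obtains \eqref{est_Vp} with some $K_2$ depending only on $p,T,C_F,C_\H$ and $\|u_0\|_{V_p}$, hence on $\varsigma,p,T,\OO,F,\H,u_0$.

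The delicate step is the first one: $\Phi_p$ is not twice Fr\'echet differentiable on $H$ and $F''$ may blow up at $\pm1$ (so that a priori the term $\int_{Q_t}F''(u)|\nabla u|^p$ is only bounded below), and therefore the It\^o identity above must be justified by regularisation. Concretely I would write it for the approximations $u_\lambda$ of Section~\ref{sec:proof1} --- where $F_\lambda''$ is bounded and both the chain rule and the integration by parts in the stochastic term (legitimate since $\H_\lambda(u_\lambda)e_k=h_k(J_\lambda(u_\lambda))\in W^{1,\infty}_0(\OO)$) cause no trouble --- derive the displayed pathwise inequality uniformly in $\lambda$, run the pathwise Gronwall at that level, and then pass to the limit $\lambda\downarrow0$. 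There, the left-hand side is handled by weak-$*$ and weak lower semicontinuity (using $u_\lambda\wstarto u$ in $L^p_w(\Omega;L^\infty(0,T;V_p))$ and $\Delta_p u_\lambda\wto\Delta_p u$ in $L^2_\cP(\Omega;L^2(0,T;H))$), and the main obstacle is the convergence of the stochastic integrals $\int_0^\cdot(-\Delta_p u_\lambda,\H_\lambda(u_\lambda)\,\d W)_H$: this requires upgrading the weak convergence of Theorem~\ref{thm1+} to strong convergence $\Delta_p u_\lambda\to\Delta_p u$ in $L^2_\cP(\Omega;L^2(0,T;H))$ --- obtainable by a standard energy/$\liminf$ argument exploiting monotonicity of the $p$-Laplacian --- combined with $\H_\lambda(u_\lambda)\to\H(u)$ strongly (a consequence of $J_\lambda(u_\lambda)\to u$ in $L^2_\cP(\Omega;L^2(0,T;H))$) and the BDG inequality.
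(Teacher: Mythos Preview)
Your argument is correct and follows the same route as the paper: It\^o's formula for $\Phi_p$, the lower bound $F''\geq -C_F$, the upper bound $\sum_k|h_k'(u)|^2\leq C_\H^2$ on the trace term, and then a pathwise Gronwall to absorb $\int_0^t\|u(s)\|_{V_p}^p\,\d s$. The constant you obtain matches (up to the harmless factor $\tfrac12$ in front of the trace term, which the paper drops).

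The only substantive difference is in the justification of the It\^o identity. The paper applies It\^o directly to the limit solution $u$, invoking \cite[Prop.~3.3]{SS-doubly2} without further comment; you instead propose to establish the pathwise inequality at the level of the approximations $u_\lambda$ and pass to the limit $\lambda\downarrow0$. Your version is more transparent but also more laborious: as you correctly identify, passing to the limit in the martingale term $\int_0^\cdot(-\Delta_p u_\lambda,\H_\lambda(u_\lambda)\,\d W)_H$ requires upgrading the weak convergence $\Delta_p u_\lambda\wto\Delta_p u$ of Section~\ref{sec:proof1} to strong $L^2$ convergence, which is an additional (though feasible) step. The paper sidesteps this entirely by appealing to the It\^o formula for $u$ itself---the regularity $u\in L^\infty(0,T;V_p)$, $\Delta_p u\in L^2(0,T;H)$ from Theorem~\ref{thm1+} is precisely what that reference requires. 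So your detour through $u_\lambda$ is not wrong, but it is avoidable.
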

\begin{proof}[Proof of Lemma~\ref{lem:path2}]
By It\^o's formula (see \cite[Prop.~3.3]{SS-doubly2}) we have
for all $t\in[0,T]$, $\P$-a.s.~that 
\begin{align*}
&\frac{1}{p}\int_\OO |\nabla u(t)|^p + 
\int_{Q_t}|\Delta_pu|^2  + 
\int_{Q_t}F''(u) |\nabla u|^p \\
&= \frac{1}{p}\int_\OO |\nabla u_0|^p
+ (p-1)\sum_{k=0}^\infty \int_{Q_t} |h_k'(u)|^2|\nabla u|^p 
+ \int_0^t \left(-\Delta_pu(s) , \H_\lambda(u_\lambda(s)) \, \d W(s)\right)_H\,.
\end{align*}
On the left-hand side it holds by assumption {\bf B} that 
\[
  \int_{Q_t}F''(u) |\nabla u|^p \geq -C_F\int_{Q_t}|\nabla u|^p\,,
\]
whereas on the right-hand side the assumption {\bf C} yields 
\[
  (p-1)\sum_{k=0}^\infty \int_{Q_t} |h_k'(u)|^2|\nabla u|^p \leq
  (p-1)C_{\H}^2\int_{Q_t}|\nabla u^p\,.
\]
It follows for every $t\in[0,T]$ that 
\begin{align*}
\norm{\nabla u(t)}_{L^p(\OO)}^p+ 
p\int_{Q_t}|\Delta_pu|^2 
&\leq \norm{\nabla u_0}_{L^p(\OO)}^p
+ p\left[(p-1)C_\H^2 + C_F\right]\int_{Q_t}|\nabla u|^p\\
&+ p\sup_{t\in[0,T]}\int_0^t \left(-\Delta_pu(s) , \H_\lambda(u_\lambda(s)) \, \d W(s)\right)_H\,.
\end{align*}
Setting then 
\beq
  \label{K2}
  K_2:= \exp\left(p\left[(p-1)C_\H^2 + C_F\right]T\right)
  \max\left\{\norm{\nabla u_0}_{L^p(\OO)}^p, p\right\}\,,
\eeq
the thesis follows from the Gronwall lemma: this concludes the proof.
\end{proof}

\subsection{Separation of the trajectories}
Now, we are ready to prove Theorem~\ref{thm2}. 
As already outlined at the beginning of this section, the proof consists in two steps. 
First of all, we prove that there exists a set $\Omega^*\in\cF$ with $\P(\Omega^*)=1$
such that for every $\omega\in\Omega^*$ one has 
$|u(\omega,t, x)| \ne 1$ for every $(t, x) \in [0, T] \times \OO$.
Since we already know that $|u|\leq1$ almost everywhere from Theorem~\ref{thm1+},
this amounts to show that 
$|u(\omega,t,x)| < 1$ for all $(\omega,t, x) \in \Omega^*\times[0, T] \times \OO$. 

Now, since the assumption $\varsigma(p-d)>d$ guarantees in particular that 
$p>d$, by the Sobolev embedding theorem  
we have the continuous inclusion 
\[
  V_p\embed C^{0,\alpha}(\overline\OO) \qquad\forall\,\alpha\in(0,1-d/p)\,.
\]
We will denote by $c_{\alpha,p}>0$ the norm of such inclusion.
By the regularity of $u$ in Theorem~\ref{thm1+}, 
we infer that there exists $\Omega^*\in\cF$ with $\P(\Omega^*)=1$
such that, for all $\alpha\in(0,1-d/p)$,
\[
  u(\omega) \in C^0([0,T]; H)\cap L^\infty(0,T; V_p)\subset 
  C^0_w([0,T]; C^{0,\alpha}(\overline\OO)) \qquad\forall\,\omega\in\Omega^*\,.
\]
This means that every trajectory $u(\omega)$ starting from $\Omega^*$
can be evaluated {\em pointwise} in space and time, and not only almost everywhere.
Since $\Omega^*$ has full measure, it is not restrictive to assume 
that \eqref{est_Gs} and \eqref{est_Vp} hold pointwise in $\Omega^*$.

{\sc Step 1.} Let $\omega\in\Omega^*$ be fixed.
Recall that by Lemmas~\ref{lem:path1}--\ref{lem:path2} we have that 
\[
  \sup_{t\in[0,T]}\norm{G_\varsigma(u(\omega,t))}_{L^1(\OO)}
  +\sup_{t\in[0,T]}\norm{u(\omega,t)}_{V_p}^p <+\infty\,.
\]
By contradiction, let us suppose that there exists $(\bar t,\bar x)\in[0,T]\times\overline\OO$
such that $|u(\omega,\bar t,\bar x)|=1$: then, thanks to the boundary conditions
it holds that $\bar x\in\OO$ necessarily, and we have
\begin{align*}
  +\infty>\sup_{t\in[0,T]}\norm{G_\varsigma(u(\omega,t))}_{L^1(\OO)}
  &\geq \int_\OO G_\varsigma(u(\omega,\bar t, x))\,\d x=
  \int_{\OO} \frac{1}{\left|1-u(\omega,\bar{t}, x)^2\right|^\varsigma} \d x \\
  &=\int_{\OO} \frac{1}{\left|u(\omega,\bar t, \bar x)^2-u(\omega,\bar{t}, x)^2\right|^\varsigma} \d x\\
  &=\int_{\OO} \frac{1}{\left|u(\omega,\bar t, \bar x)-u(\omega,\bar{t}, x)\right|^\varsigma}
  \frac1{\left|u(\omega,\bar t, \bar x)+u(\omega,\bar{t}, x)\right|^\varsigma} \d x\,.
\end{align*}
Now, note that 
\[
\left|u(\omega, \bar{t}, \bar{x})+
u(\omega, \bar{t}, x)\right|^\varsigma
 \leq \left(1 + |u(\bar{t}, x)|\right)^\varsigma \leq 2^\varsigma \qquad \text{for a.e.~}x \in \OO\,,
\]
while the Sobolev embedding theorem guarantees that 
\begin{align*}
|u(\omega, \bar{t}, \bar{x}) - u(\omega, \bar{t}, x)|^\varsigma
&\leq \norm{u(\omega, \bar t)}^\varsigma_{C^{0, \alpha}(\overline\OO)} 
|\bar{x}-x|^{\varsigma\alpha}\\
&\leq c_{\alpha,p}^\varsigma \sup_{t\in[0,T]}\norm{u(\omega,t)}_{V_p}^\varsigma
|\bar{x}-x|^{\varsigma \alpha} \,.
\end{align*}
It follows by rearranging the terms that 
\begin{align*}
+\infty>\sup_{t\in[0,T]}\norm{G_\varsigma(u(\omega,t))}_{L^1(\OO)}\cdot
\sup_{t\in[0,T]}\norm{u(\omega,t)}_{V_p}^\varsigma
& \geq \frac{1}{c_{\alpha,p}^\varsigma2^\varsigma} 
\int_{\OO} \frac{1}{|\bar{x}-x|^{\varsigma\alpha}}\, \d x\,.
\end{align*}
However, the assumption $\varsigma(p-d)>pd$ in {\bf D}
ensures that $d/\varsigma < 1-d/p$: hence, it is possible to choose 
$\alpha\in(0,1-d/p)$ such that $\varsigma\alpha> d$. 
With this choice, the integral on the right-hand side is infinite,
but this is a contradiction since the left-had side is finite.
This shows that $|u(\omega, t,x)|<1$ for all $(t,x)\in[0,T]\times\overline\OO$, as required.

{\sc Step 2.} Let $\omega\in\Omega^*$ be fixed.
We show here that there exists $\delta = \delta(\omega)\in(0,\delta_0]$
such that $|u(\omega, t,x)|\leq 1 - \delta(\omega)$ for every 
$(t,x)\in[0,T]\times\overline\OO$. We proceed again by contradiction, 
supposing the existence of a
sequence $(t_n, x_n)_{n \in \mathbb{N}} \subset [0, T] \times \overline\OO$
for which $|u(\omega, t_n , x_n)| \to 1$ as $n\to\infty$. 
By compactness of $[0,T]\times\overline\OO$,
possibly arguing on a subsequence, we can assume that there exists 
$(\bar t, \bar x)\in[0,T]\times\overline\OO$ such that 
$(t_n, x_n)\to (\bar t, \bar x)$ as $n\to\infty$.
At this point, 
using the same notation as in {\sc Step 1}, for $\alpha\in(0,1-d/p)$ we have
\begin{align*}
|u(\omega, t_{n}, x_{n}) - u(\omega, \bar{t}, \bar{x})| 
&\leq |u(\omega, t_{n}, x_{n}) - u(\omega, t_{n}, \bar{x})| 
+ |u(\omega, t_{n}, \bar{x}) - u(\omega, \bar{t}, \bar{x})| \\
& \leq
c_{\alpha,p}\sup_{t\in[0,T]}\|u(\omega, t)\|_{V_p} |x_{n} - \bar{x}|^{\alpha} 
+ |u(\omega, t_{n}, \bar{x}) - u(\omega, \bar{t}, \bar{x})|\,.
\end{align*}
Clearly, the first term on the right-hand side converges to $0$ as $n\to\infty$.
As for the second term, we note that 
\begin{equation*}
I_{\bar{x}}(v) := v(\bar{x})\,, \quad v \in V_p\,,
\end{equation*} 
defines a continuous linear functional $I_{\bar x}\in V_p^*$ thanks to the embedding 
$V_p\embed C^0(\overline\OO)$. Consequently, 
recalling that $u(\omega)\in C^0_w([0,T]; V_p)$, we have as $n\to\infty$ that
\begin{equation*}
u(\omega, t_{n}, \bar{x}) = 
\left\langle I_{\bar{x}}, u(\omega, t_{n})\right\rangle 
\to \left\langle I_{\bar{x}}, u(\omega, \bar t)\right\rangle  = u(\omega, \bar{t}, \bar{x})\,,
\end{equation*}
so that 
\[
|u(\omega, t_{n}, \bar{x}) - u(\omega, \bar{t}, \bar{x})| \to 0\,.
\]
Putting this information together, 
we deduce that $|u(\omega,\bar t, \bar x)|=1$: however, 
this is in contrast with what we proved in {\sc Step 1}. 
This shows indeed that there exists 
$\delta = \delta(\omega)>0$
such that $|u(\omega, t,x)|\leq 1 - \delta(\omega)$ for every 
$(t,x)\in[0,T]\times\overline\OO$. The fact that $\delta(\omega)\leq\delta_0$
follows a posteriori from the initial condition.
This concludes the proof of Theorem~\ref{thm2}.


\section{Probability of separation}
\label{sec:proof3}
The aim of this section is to investigate the asymptotic behaviour 
of the probability distribution of the separation layer $\Lambda$ defined in \eqref{def:delta}
and to prove Theorem~\ref{thm3}.
Let us recall that
the random variable $\Lambda:\Omega\to(0,\delta_0]$,
whose existence is ensured by Theorem~\ref{thm2}, defines 
the separation layer of each trajectory of $u$ from the potential barriers $\pm1$.
We already know by definition of $\Lambda$ that 
its distribution is concentrated on $(0,\delta_0]$ in the sense that 
\[
  \P\{\Lambda\leq0\}=0 \qquad\text{and}\qquad\P\{\Lambda\leq\delta_0\}=1\,.
\]
Here we want to give an estimate of the probability 
\[
\P\{\Lambda\leq\delta\}=\P\left\{\sup_{(t,x)\in[0,T]\times\overline\OO}|u(t,x)|\geq1-\delta\right\}
\]
when $\delta\in(0,\delta_0)$ is close to $0$.
The main idea is to estimate the $L^\infty$-norm of $u$ in two parts, 
focusing on ``interior'' estimates and on ``boundary'' estimates, separately. 
To this end, we use the notation 
\begin{equation*}
\OO_\sigma:=\left\{x\in\OO: \inf_{y\in\partial\OO}|x-y|>\sigma\right\}\,, \qquad \OO_\sigma^c:=\overline\OO\setminus\OO_\sigma\,,\qquad\sigma>0\,.
\end{equation*}

The following lemma is crucial.
\begin{lem}
  \label{lem:prob_sep}
  For every $\alpha\in(0,1-d/p)$, there exists a constant $K>0$,
  depending only on $\alpha$, $d$, $p$, $\varsigma$, and $\OO$,
  such that 
  \beq
  \label{est_prob_sep}
  \P\{\Lambda\leq\delta\}\leq\P
  \left\{\sup_{t\in[0,T]}\norm{G_\varsigma(u(t))}_{L^1(\OO)}
  \cdot
  \sup_{t\in[0,T]}\norm{u(t)}_{V_p}^{d/\alpha}
  \geq \frac{K}{\delta^{\varsigma-\frac{d}{\alpha}}} \right\} 
  \qquad\forall\,\delta\in(0,\delta_0]\,.
  \eeq
\end{lem}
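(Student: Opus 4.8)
The plan is to show that on the event $\{\Lambda\leq\delta\}$ the product of the two suprema on the left-hand side of \eqref{est_prob_sep} is automatically bounded below by $K\delta^{-(\varsigma-d/\alpha)}$, which gives the desired inclusion of events. Fix $\omega\in\Omega^*$ (the full-measure set from Theorem~\ref{thm2}) with $\Lambda(\omega)\leq\delta$; then, since $u(\omega)$ is continuous in space-time, there exists $(\bar t,\bar x)\in[0,T]\times\overline\OO$ with $|u(\omega,\bar t,\bar x)|=1-\Lambda(\omega)\geq1-\delta$. Because $u(\omega,\bar t)\in V_p\hookrightarrow C^{0,\alpha}(\overline\OO)$ with embedding constant $c_{\alpha,p}$, for every $x\in\OO$ one controls $|u(\omega,\bar t,x)-u(\omega,\bar t,\bar x)|\leq c_{\alpha,p}\norm{u(\omega,\bar t)}_{V_p}|x-\bar x|^\alpha$. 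The first step is therefore to use this H\"older bound to produce, on a small ball around $\bar x$, a quantitative lower bound on $|u(\omega,\bar t,x)|$: precisely, if $|x-\bar x|^\alpha\leq\frac{\delta}{2c_{\alpha,p}\norm{u(\omega,\bar t)}_{V_p}}$, then $|u(\omega,\bar t,x)|\geq1-\delta-\delta/2\geq1-2\delta$, so that $1-u(\omega,\bar t,x)^2\leq 2(1-|u(\omega,\bar t,x)|)\cdot 2\leq 8\delta$, hence $G_\varsigma(u(\omega,\bar t,x))\geq(8\delta)^{-\varsigma}$ on that ball.

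The second step is to integrate this pointwise lower bound. Let $r:=\bigl(\tfrac{\delta}{2c_{\alpha,p}\norm{u(\omega,\bar t)}_{V_p}}\bigr)^{1/\alpha}$ and let $B$ be the intersection of $\OO$ with the ball of radius $r$ about $\bar x$; one uses the standard fact that for a bounded domain with Lipschitz boundary there is $c_\OO>0$ with $|B\cap\OO|\geq c_\OO\min\{r,\operatorname{diam}\OO\}^d$, so (absorbing $\operatorname{diam}\OO$ and $\delta_0$ into the constant, using $\delta\leq\delta_0$) $|B|\gtrsim r^d$. Then
\[
\sup_{t\in[0,T]}\norm{G_\varsigma(u(\omega,t))}_{L^1(\OO)}
\geq\int_B G_\varsigma(u(\omega,\bar t,x))\,\d x
\geq (8\delta)^{-\varsigma}\,|B|
\gtrsim \delta^{-\varsigma}\,\frac{\delta^{d/\alpha}}{\norm{u(\omega,\bar t)}_{V_p}^{d/\alpha}}\,,
\]
where in the last step $r^d=\bigl(\tfrac{\delta}{2c_{\alpha,p}}\bigr)^{d/\alpha}\norm{u(\omega,\bar t)}_{V_p}^{-d/\alpha}$. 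Rearranging, $\sup_{t}\norm{G_\varsigma(u(\omega,t))}_{L^1(\OO)}\cdot\norm{u(\omega,\bar t)}_{V_p}^{d/\alpha}\gtrsim\delta^{d/\alpha-\varsigma}$, and bounding $\norm{u(\omega,\bar t)}_{V_p}^{d/\alpha}\leq\sup_{t\in[0,T]}\norm{u(\omega,t)}_{V_p}^{d/\alpha}$ yields the claimed lower bound with an explicit constant $K=K(\alpha,d,p,\varsigma,\OO)$. Since this holds for every $\omega\in\Omega^*$ with $\Lambda(\omega)\leq\delta$, and $\P(\Omega^*)=1$, we get the set inclusion $\{\Lambda\leq\delta\}\subset\{\,\sup_t\norm{G_\varsigma(u(t))}_{L^1}\cdot\sup_t\norm{u(t)}_{V_p}^{d/\alpha}\geq K\delta^{-(\varsigma-d/\alpha)}\,\}$ up to a null set, and taking probabilities proves \eqref{est_prob_sep}.

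The one genuinely delicate point is bookkeeping the constants so that $K$ depends only on $\alpha,d,p,\varsigma,\OO$ and not on $\delta$ or $\omega$: this requires being careful that $r$ may exceed $\operatorname{diam}\OO$ (handled by the $\min$ in the volume lower bound, together with $\delta\leq\delta_0$ to make the resulting constant uniform), and that the conversion $1-s^2\leq c(1-|s|)$ for $|s|$ near $1$ is used only where $|u|$ is close to $1$, which is exactly where the H\"older bound places us. Everything else is a routine chain of elementary inequalities; the role of the hypothesis $\varsigma(p-d)>pd$ is not needed here but only later, in Theorem~\ref{thm3}, to ensure that the exponent $\varsigma-d/\alpha$ can be made positive by choosing $\alpha\in(d/\varsigma,1-d/p)$.
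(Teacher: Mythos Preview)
Your argument is correct and follows the same core strategy as the paper: locate a point $(\bar t,\bar x)$ where $|u|\geq1-\delta$, use the H\"older continuity from $V_p\hookrightarrow C^{0,\alpha}$ to propagate this to a ball of radius $r\sim(\delta/\norm{u(\bar t)}_{V_p})^{1/\alpha}$, and integrate the resulting lower bound on $G_\varsigma(u(\bar t,\cdot))$ over that ball.

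The one substantive difference is in how the boundary is handled. You allow $\bar x$ to sit anywhere in $\overline\OO$ and invoke the Lipschitz-domain fact $|B_r(\bar x)\cap\OO|\geq c_\OO\min\{r,\operatorname{diam}\OO\}^d$ to get the volume lower bound. The paper instead uses the Dirichlet boundary condition directly: if $\bar x$ were within distance $\sigma$ of $\partial\OO$, then $|u(\bar t,\bar x)|\leq c_{\alpha,p}\norm{u(\bar t)}_{V_p}\sigma^\alpha$, which for the same choice of $\sigma\sim(\delta/\norm{u(\bar t)}_{V_p})^{1/\alpha}$ forces $|u(\bar t,\bar x)|<1-\delta$, a contradiction; hence $B_\sigma(\bar x)\subset\OO$ entirely and no partial-ball estimate is needed. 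Your route is slightly more direct (no case split), while the paper's avoids appealing to the geometric volume bound and makes explicit use of the zero trace. Both give the same estimate with the same scaling in $\delta$.

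One small point of bookkeeping: your constant, like the paper's (via $k_0=1+\delta_0/(1-\delta_0)$ in their formula for $K$), actually depends on $\delta_0$ through the handling of the $\min\{r,\operatorname{diam}\OO\}$; the lemma statement's list of dependencies is slightly incomplete in both versions, but this is harmless since $\delta_0$ is fixed by $u_0$ and the essential requirement is only that $K$ be independent of $\delta\in(0,\delta_0]$.
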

\begin{remark}
Let us point out that the estimate given by Lemma~\ref{lem:prob_sep}
is meaningful for those values $\alpha\in(0,1-d/p)$ such that $\varsigma-d/\alpha>0$.
Now, an easy computation shows that 
it is possible to choose $\alpha$ satisfying both conditions 
if and only if 
\[
  \frac{d}{\varsigma}<1-\frac{d}{p}\,,
\]
and this is ensured exactly by assumption {\bf D}.
Hence, by virtue of assumption {\bf D},
we will restrict from now on to the values 
$\alpha\in(d/\varsigma,1-d/p)$ only, so that it actually holds that $\varsigma-\frac{d}{\alpha}>0$.
\end{remark}
\begin{remark}
Note that as direct consequence of Lemma~\ref{lem:prob_sep}
and the fact that $\Lambda\leq\delta_0$ almost surely by definition, it holds that
\[
  \P
  \left\{\sup_{t\in[0,T]}\norm{G_\varsigma(u(t))}_{L^1(\OO)}
  \cdot
  \sup_{t\in[0,T]}\norm{u(t)}_{V_p}^{d/\alpha}
  \geq \frac{K}{\delta_0^{\varsigma-\frac{d}{\alpha}}} \right\} =1\,.
\]
\end{remark}
\begin{proof}[Proof of Lemma~\ref{lem:prob_sep}]
Let $\delta\in(0,\delta_0]$ and $\alpha\in(0,1-d/p)$. Recalling that 
$u\in C^0_w([0,T]; C^{0,\alpha}(\overline\OO))$,
we have the following trivial estimate
\begin{align*}
\P\{\Lambda\leq\delta\}
 &= \P\left\{\exists\, (\bar{t}, \bar{x}) \in [0,T] \times \overline\OO: \;
 |u(\bar{t}, \bar{x})|\geq 1-\delta \right\}\,.
\end{align*}
Let us estimate now the probability on the right-hand side:
we distinguish the cases when $\bar x$ is close to the boundary $\partial\OO$ or not.
To this end, let $\omega\in\Omega^*$, $\sigma>0$, and 
$(\bar t, \bar x) \in [0,T] \times \overline\OO_\sigma^c$
be such that 
\[
  |u(\omega,\bar{t}, \bar{x})|\geq 1-\delta\,.
\]
Then there exists $\bar x_\partial\in\partial\OO$ such that $|\bar x-\bar x_\partial|\leq\sigma$:
hence, thanks to the boundary conditions and the regularity of $u$, we have 
\begin{align*}
|u(\omega, \bar t, \bar{x})| = 
|u(\omega, \bar t, \bar{x}) - u(\omega, \bar t, \bar x_\partial)| 
&\leq \|u(\bar t)\|_{C^{0, \alpha}(\overline\OO)}
|\bar{x}-\bar{x}_\partial|^{\alpha} \\
&\leq c_{\alpha,p} \sup_{t\in[0,T]}\norm{u(\omega,t)}_{V_p} \sigma^{\alpha}\,.
\end{align*}
We would like to choose now $\sigma=\sigma(\omega)$
in such a way that $|u(\omega, \bar t, \bar{x})|<1-\delta$: for example, 
this can be achieved by setting 
\[
\sigma_k = \sigma_k(\omega):=
\left(\frac{\delta/k}{c_{\alpha,p} \sup_{t\in[0,T]}\norm{u(\omega,t)}_{V_p}} \right)^{\frac{1}{\alpha}}
\]
and picking $k>0$ so that $\delta<k(1-\delta)$ for all $\delta\in(0,\delta_0]$.
An elementary computation shows that this is satisfied when
\[
k>\max_{\delta\in(0,\delta_0]}\frac{\delta}{1-\delta}=\frac{\delta_0}{1-\delta_0}\,.
\]
Hence, setting $k_0:=1+\frac{\delta_0}{1-\delta_0}$ and defining
\begin{equation}\label{sigmaValue}
\sigma = \sigma(\omega):=
\left(\frac{\delta/{k_0}}{c_{\alpha,p}
 \sup_{t\in[0,T]}\norm{u(\omega,t)}_{V_p}} \right)^{\frac{1}{\alpha}}\,,
\end{equation}
from the previous considerations we obtain 
$|u(\omega,\bar t, \bar{x})| \leq \delta/{k_0}<1-\delta$. 
This shows that 
\begin{align*}
  &\left\{\omega\in\Omega^*:\;\exists\, (\bar{t}, \bar{x}) \in [0,T] \times \overline\OO: \;
 |u(\omega, \bar{t}, \bar{x})|\geq 1-\delta \right\}\\
 &=\left\{\omega\in\Omega^*:\;\exists\, (\bar{t}, \bar{x}) \in [0,T] \times \OO_{\sigma(\omega)}: \;
 |u(\omega, \bar{t}, \bar{x})|\geq 1-\delta \right\}\,.
\end{align*}
Let then $\omega\in\Omega^*$, $\bar t\in[0,T]$, and $\bar x\in\OO_{\sigma(\omega)}$
be such that $|u(\omega, \bar{t}, \bar{x})|\geq 1-\delta$.
If we denote with $B_{\sigma(\omega)}(\bar{x})$ 
the ball centred in $\bar{x}$ with radius $\sigma(\omega)$, it holds
that $B_{\sigma(\omega)}(\bar{x})\subset\OO$ since $\bar x\in\OO_{\sigma(\omega)}$,
from which 
\[
\sup_{t\in[0,T]}\norm{G_\varsigma(u(\omega,t))}_{L^1(\OO)}
\geq \int_{\OO} \frac{1}{\left|1-u(\omega, \bar{t}, x)^2\right|^\varsigma} \,\d x 
\geq \int_{B_{\sigma(\omega)}(\bar{x})} 
\frac{1}{\left|1-u(\omega, \bar{t}, x)^2\right|^\varsigma}\,\d x\,.
\]
Let us suppose with that $u(\omega, \bar{t}, \bar{x})\geq 1-\delta$
(the alternative case when $u(\omega, \bar{t}, \bar{x})\leq -1+\delta$
is analogous and the argument can be adapted easily).
For every $x\in B_{\sigma(\omega)}(\bar{x})$,
the denominator of the integrand can be bounded as follows:
\begin{align*}
\left|1-u(\omega, \bar{t}, x)^2\right|^\varsigma 
&= \left|1-u(\omega, \bar{t}, x)\right|^\varsigma\left|1+u(\omega, \bar{t}, x)\right|^\varsigma \\
& \leq 2^\varsigma \left|1-u(\omega, \bar{t}, x)\right|^\varsigma \\
&  \leq 2^\varsigma 
\left[\left|u(\omega, \bar{t}, x) - u(\omega, \bar{t}, \bar{x})\right|^\varsigma 
+ \left|u(\omega, \bar{t}, \bar{x}) - 1\right|^\varsigma\right] \\
& \leq 2^\varsigma 
\left[\norm{u(\omega, \bar t)}_{C^{0,\alpha}(\overline\OO)}^\varsigma 
|x-\bar{x}|^{\alpha \varsigma}+\delta^\varsigma\right] \\
& \leq 2^\varsigma 
\left[c_{\alpha,p}^\varsigma \sup_{t\in[0,T]}\norm{u(\omega,t)}_{V_p}^\varsigma
\sigma^{\alpha \varsigma}+\delta^\varsigma\right]\,,
\end{align*}
so that taking \eqref{sigmaValue} into account we obtain
\begin{equation*}
\left|1-u(\omega, \bar{t}, x)^2\right|^\varsigma 
\leq 2^\varsigma
\left[c_{\alpha,p}^\varsigma\sup_{t\in[0,T]}\norm{u(\omega,t)}_{V_p}^\varsigma
 \left(\frac{\delta/{k_0}}{c_{\alpha,p}\sup_{t\in[0,T]}
 \norm{u(\omega,t)}_{V_p}}\right)^\varsigma + \delta^\varsigma \right] = 
 2^{\varsigma}(k_0^{-\varsigma}+1)\delta^\varsigma\,.
\end{equation*}
Therefore, 
putting this information together 
and denoting by $a_d$ the measure of the unit ball in $\erre^d$,
we infer that 
\begin{align*}
\sup_{t\in[0,T]}\norm{G_\varsigma(u(\omega,t))}_{L^1(\OO)}&\geq
\frac1{2^{\varsigma}(k_0^{-\varsigma}+1)
\delta^{\varsigma}}\left|B_{\sigma(\omega)}(\bar x)\right|
=\frac{a_d}{2^{\varsigma}(k_0^{-\varsigma}+1)
\delta^{\varsigma}}|\sigma(\omega)|^d\\
&=\frac{a_d}{2^{\varsigma}(k_0^{-\varsigma}+1)
c_{\alpha,p}^{d/\alpha}k_0^{d/\alpha}\sup_{t\in[0,T]}\norm{u(\omega,t)}_{V_p}^{d/\alpha}}
\delta^{\frac{d}\alpha - \varsigma}
\,.
\end{align*}
Rearranging the terms, we obtain the inequality 
\[
\sup_{t\in[0,T]}\norm{G_\varsigma(u(\omega,t))}_{L^1(\OO)}\cdot
\sup_{t\in[0,T]}\norm{u(\omega,t)}_{V_p}^{d/\alpha} \geq
\frac{a_d}{2^{\varsigma}(k_0^{-\varsigma}+1)
c_{\alpha,p}^{d/\alpha}k_0^{d/\alpha}}\delta^{\frac{d}\alpha - \varsigma}\,.
\]
Hence, setting 
\beq\label{K}
  K:=\frac{a_d}{2^{\varsigma}(k_0^{-\varsigma}+1)
c_{\alpha,p}^{d/\alpha}k_0^{d/\alpha}}\,,
\eeq
we have proved that 
\begin{align*}
  &\left\{\omega\in\Omega^*:\;\exists\, (\bar{t}, \bar{x}) \in [0,T] \times \overline\OO: \;
 |u(\omega, \bar{t}, \bar{x})|\geq 1-\delta \right\}\\
 &=\left\{\omega\in\Omega^*:\;\exists\, (\bar{t}, \bar{x}) \in [0,T] \times \OO_{\sigma(\omega)}: \;
 |u(\omega, \bar{t}, \bar{x})|\geq 1-\delta \right\}\\
 &\subset\left\{\omega\in\Omega^*:\; 
 \sup_{t\in[0,T]}\norm{G_\varsigma(u(\omega,t))}_{L^1(\OO)}\cdot
\sup_{t\in[0,T]}\norm{u(\omega,t)}_{V_p}^{d/\alpha} \geq K\delta^{\frac{d}{\alpha}-\varsigma}
 \right\}\,.
\end{align*}
Since $\P(\Omega^*)=1$, this concludes the proof.
\end{proof}

We are now ready to prove Theorem~\ref{thm3}.
Recalling Lemmas~\ref{lem:path1}--\ref{lem:path2}, we know that 
\begin{align*}
  (G'_\varsigma(u), \H(u)\cdot)_H &\in L^2_\cP(\Omega; L^2(0,T; \cL^2(U,\erre)))\,,\\
  (\Delta_pu, \H(u)\cdot)_{H} &\in L^2_\cP(\Omega; L^2(0,T; \cL^2(U,\erre)))\,,
\end{align*}
so that the processes 
\begin{align*}
  M_1&:=\int_0^\cdot
  (G'_\varsigma(u(s)), \H(u(s))\,\d W(s))_H \in L^2_\cP(\Omega; C^0([0,T]))\,,\\
  M_2&:=\int_0^\cdot
  (-\Delta_pu(s), \H(u(s))\,\d W(s))_{H} \in L^2_\cP(\Omega; C^0([0,T]))\,,
\end{align*}
are well-defined square-integrable continuous real martingales that satisfy 
\begin{align*}
  \sup_{t\in[0,T]}\norm{G_\varsigma(u(t))}_{L^1(\OO)}
  &\leq K_1 + \sup_{t\in[0,T]}M_1(t) \qquad\P\text{-a.s.}\,,\\
  \sup_{t\in[0,T]}\norm{u(t)}_{V_p}^p + \norm{\Delta_p u}_{L^2(0,T; H)}^2
  &\leq K_2 + K_2\sup_{t\in[0,T]}M_2(t) \qquad\P\text{-a.s.}
\end{align*}
We will make use of the Bernstein inequality for martingales, that we recall here.
\begin{lem}(Bernstein)
\label{lem:bern}
Let $M$ be a continuous real martingale and let $l,a,b>0$.
Then
\[
  \P\left\{\sup_{t\in[0,T]}|M(t)|\geq l\,,\quad [M](T)\leq a\sup_{t\in[0,T]}|M(t)| + b\right\}
  \leq \exp\left(-\frac{l^2}{al + b}\right)\,.
\]
\end{lem}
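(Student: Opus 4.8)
The plan is to combine the exponential supermartingale attached to $M$ with Doob's maximal inequality, using a careful restriction of the exponential parameter to cope with the self-referential bound on $[M](T)$. For $\theta>0$ set
\[
  \mathcal E_\theta(t):=\exp\!\left(\theta M(t)-\tfrac{\theta^2}{2}[M](t)\right)\,,\qquad t\in[0,T]\,.
\]
Since $M$ is continuous, $\mathcal E_\theta$ is a nonnegative continuous local martingale, hence (by Fatou's lemma) a supermartingale, with $\mathcal E_\theta(0)=e^{\theta M(0)}$, which equals $1$ in all the applications of interest, where $M(0)=0$ (a nonzero starting point only rescales the final bound). Doob's maximal inequality for nonnegative supermartingales then yields $\P\{\sup_{t\in[0,T]}\mathcal E_\theta(t)\geq\mu\}\leq\mu^{-1}$ for every $\mu>0$.

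Next, since $t\mapsto[M](t)$ is nondecreasing, for each fixed $\omega$ one has $\mathcal E_\theta(t)\geq\exp(\theta M(t)-\tfrac{\theta^2}{2}[M](T))$ for every $t$, so that
\[
  \sup_{t\in[0,T]}\mathcal E_\theta(t)\ \geq\ \exp\!\Big(\theta\sup_{t\in[0,T]}M(t)-\tfrac{\theta^2}{2}[M](T)\Big)\,;
\]
combined with the maximal inequality this gives $\P\{\theta\sup_{t\in[0,T]}M(t)-\tfrac{\theta^2}{2}[M](T)\geq s\}\leq e^{-s}$ for every $s>0$, and the same bound holds with $M$ replaced by $-M$. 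On the event in the statement I would then insert the hypothesis $[M](T)\leq a\sup_{t}|M(t)|+b$ and split according to whether $\sup_{t}M(t)$ or $\sup_{t}(-M(t))$ realises $\sup_{t}|M(t)|$; on either piece $\sup_{t}|M(t)|\geq l$ and
\[
  \theta\sup_{t}M(t)-\tfrac{\theta^2}{2}[M](T)\ \geq\ \Big(\theta-\tfrac{a\theta^2}{2}\Big)\sup_{t}|M(t)|-\tfrac{b\theta^2}{2}\,.
\]
To replace $\sup_{t}|M(t)|$ here by its lower bound $l$, I must keep the coefficient $\theta-\tfrac{a\theta^2}{2}$ nonnegative, i.e.\ restrict to $\theta\in(0,2/a]$; for such $\theta$ the right-hand side is at least $\theta l-\tfrac{(al+b)\theta^2}{2}$, and therefore (up to the elementary factor arising from the two-sided splitting) the probability in question is bounded by $\exp(-\theta l+\tfrac{(al+b)\theta^2}{2})$ for every admissible $\theta$. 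Optimising, the unconstrained minimiser $\theta_*=l/(al+b)$ already satisfies $\theta_*\leq 2/a$, and plugging it in produces exponential decay of order $\exp(-c\,l^2/(al+b))$; tracking the constants explicitly — and, if one wants to avoid the factor coming from the splitting, rerunning the same computation with $\cosh(\theta M)$ in place of $e^{\theta M}$ — yields the stated inequality.

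The genuinely delicate point, and the one I expect to be the main obstacle, is the self-referential structure of the hypothesis $[M](T)\leq a\sup_{t}|M(t)|+b$: a careless choice of $\theta$ makes the coefficient of $\sup_{t}|M(t)|$ negative, the lower bound $l$ becomes useless, and the argument collapses. The fix is precisely the restriction $\theta\in(0,2/a]$ together with the (easily verified) fact that the free optimiser already lies in this interval, which is what makes the estimate self-improving. The remaining ingredients — Fatou's lemma to pass from a local martingale to a supermartingale, Doob's maximal inequality, and the reduction from the two-sided supremum $\sup_{t}|M(t)|$ to the one-sided suprema of $\pm M$ — are completely standard.
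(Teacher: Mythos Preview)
The paper does not prove this lemma at all: it is simply stated as a known result (``We will make use of the Bernstein inequality for martingales, that we recall here''), so there is no paper proof to compare against. Your sketch via the exponential supermartingale $\mathcal E_\theta$, Doob's maximal inequality, and optimisation over $\theta\in(0,2/a]$ is the standard route to such inequalities and is correct in substance; in particular, your observation that the unconstrained optimiser $\theta_*=l/(al+b)$ automatically satisfies $\theta_*\leq 2/a$ is exactly the point that makes the self-referential hypothesis harmless.

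One small caveat on constants: the computation you outline produces a bound of order $\exp\!\big(-l^2/(2(al+b))\big)$ (one-sided), or twice that after the two-sided splitting, rather than the $\exp\!\big(-l^2/(al+b)\big)$ written in the lemma. The $\cosh$ device you mention removes the prefactor $2$ from the union bound but does not remove the $2$ in the exponent. This is harmless for the paper's purposes --- in every subsequent application (Lemma~\ref{lem:est_M}, the proofs of Theorems~\ref{thm3} and~\ref{thm4}) the constants are immediately absorbed into $L$, $N$, $K_i'$, $K_i''$, so only the functional form $l^2/(al+b)$ in the exponent matters --- but if you want to match the stated constant exactly you would need to track down the precise version of Bernstein the authors have in mind.
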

Exploiting Berstein's inequality on $M_1$ and $M_2$, we obtain 
the following important estimates.
\begin{lem}
  \label{lem:est_M}
  There exist four constants $K_1', K_1'', K_2', K_2''>0$, only 
  depending on $\varsigma$, $p$, $T$, $\OO$, $F$,
  $\H$, and $u_0$, such that, for every $l>0$,
  \begin{align*}
  \P\left\{\sup_{t\in[0,T]}M_1 \geq l\right\} &\leq \exp\left(-\frac{l^2}{K_1'l + K_1''}\right)\,,\\
  \P\left\{\sup_{t\in[0,T]}M_2 \geq l\right\} &\leq \exp\left(-\frac{l^2}{K_2'l + K_2''}\right)\,.
  \end{align*}
\end{lem}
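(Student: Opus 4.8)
The plan is to apply the Bernstein inequality of Lemma~\ref{lem:bern} to $M_1$ and $M_2$ separately. The key point is that, in order to make the conditional event $\{[M_i](T)\leq a\sup_{t\in[0,T]}|M_i(t)|+b\}$ carry full probability (so that it can simply be discarded), it suffices to bound the quadratic variation $[M_i](T)$ \emph{affinely} in $\sup_{t\in[0,T]}|M_i(t)|$, and this is exactly what the pathwise estimates \eqref{est_Gs}--\eqref{est_Vp} provide once the quadratic variations are related to $\sup_t\norm{G_\varsigma(u(t))}_{L^1(\OO)}$ and to $\norm{\Delta_pu}_{L^2(0,T;H)}^2$, respectively.

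First I would treat $M_1$, whose quadratic variation is $[M_1](T)=\int_0^T\sum_{k\geq0}\left(\int_\OO G_\varsigma'(u(s))h_k(u(s))\right)^2\d s$. Using the Cauchy--Schwarz inequality in $\OO$ for each $k$, together with the pointwise bound $G_\varsigma'(r)^2|h_k(r)|^2\leq\frac{4\varsigma^2}{(\varsigma+2)!^2}\norm{h_k}_{W^{\varsigma+2,\infty}(-1,1)}^2G_\varsigma(r)$ for $|r|\leq1$ (already established within the proof of Lemma~\ref{lem:path1}) and summing over $k$ by Assumption~{\bf D}, one gets $[M_1](T)\leq c_1T\sup_{t\in[0,T]}\norm{G_\varsigma(u(t))}_{L^1(\OO)}$ for a constant $c_1$ depending only on $\varsigma$, $\OO$ and $\H$. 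Plugging in the pathwise estimate \eqref{est_Gs} and using $0\leq\sup_{t\in[0,T]}M_1(t)\leq\sup_{t\in[0,T]}|M_1(t)|$ gives, $\P$-almost surely, $[M_1](T)\leq c_1TK_1+c_1T\sup_{t\in[0,T]}|M_1(t)|$. Since this bound holds almost surely, Lemma~\ref{lem:bern} with $a=c_1T$ and $b=c_1TK_1$ yields $\P\{\sup_tM_1(t)\geq l\}\leq\P\{\sup_t|M_1(t)|\geq l\}\leq\exp(-l^2/(c_1Tl+c_1TK_1))$ for every $l>0$, i.e.~the first assertion with $K_1':=c_1T$ and $K_1'':=c_1TK_1$.

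The argument for $M_2$ runs in parallel: one has $[M_2](T)=\int_0^T\sum_{k\geq0}\left(\int_\OO(-\Delta_pu(s))h_k(u(s))\right)^2\d s$, and Cauchy--Schwarz in $\OO$ together with $|u|\leq1$ and $\sum_k\norm{h_k}_{L^\infty(-1,1)}^2\leq C_{\H,\varsigma}^2$ give $[M_2](T)\leq c_2\norm{\Delta_pu}_{L^2(0,T;H)}^2$ with $c_2:=|\OO|C_{\H,\varsigma}^2$. Inserting the pathwise estimate \eqref{est_Vp} of Lemma~\ref{lem:path2} and again $\sup_tM_2(t)\leq\sup_t|M_2(t)|$ leads to $[M_2](T)\leq c_2K_2+c_2K_2\sup_{t\in[0,T]}|M_2(t)|$ $\P$-a.s., and Lemma~\ref{lem:bern} with $a=b=c_2K_2$ provides the second bound with $K_2':=K_2'':=c_2K_2$.

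There is no genuine obstacle here; the only point requiring care is that the bound on $[M_i](T)$ be \emph{affine} in $\sup_t|M_i(t)|$ rather than, say, quadratic, which is precisely what the pathwise estimates \eqref{est_Gs}--\eqref{est_Vp} deliver in combination with the elementary fact $0\leq\sup_tM_i(t)\leq\sup_t|M_i(t)|$. All the resulting constants are explicit in $K_1$, $K_2$, $T$, $\OO$, $\varsigma$, $C_{\H,\varsigma}^2$, hence ultimately in $\varsigma$, $p$, $T$, $\OO$, $F$, $\H$, $u_0$, as claimed.
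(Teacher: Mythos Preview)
Your proposal is correct and follows essentially the same route as the paper: bound $[M_i](T)$ via the pointwise estimate $G_\varsigma'(r)^2|h_k(r)|^2\lesssim\|h_k\|_{W^{\varsigma+2,\infty}}^2G_\varsigma(r)$ (resp.\ $|h_k(u)|\leq\|h_k\|_{L^\infty}$), feed in the pathwise inequalities \eqref{est_Gs}--\eqref{est_Vp} to obtain an affine control of $[M_i](T)$ in $\sup_t|M_i(t)|$, and apply Lemma~\ref{lem:bern}. You are in fact slightly more careful than the paper in two places: you write the quadratic variation correctly as $\sum_k\bigl(\int_\OO\cdots\bigr)^2$ and insert an explicit Cauchy--Schwarz step (whereas the paper writes it directly as $\sum_k\int_\OO|\cdots|^2$, which is already an upper bound), and you keep track of $\sup_t|M_i(t)|$ versus $\sup_tM_i(t)$ consistently with the statement of Lemma~\ref{lem:bern}.
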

\begin{proof}[Proof of Lemma~\ref{lem:est_M}]
  Let us focus on $M_1$. One has 
  \[
  [M_1](T)=\int_0^T\sum_{k=0}^\infty \int_\OO |G'_\varsigma(u(s))|^2|h_k(u(s))|^2\,\d s\,,
  \]
  where, using the same computations as in the 
  proof of Lemma~\ref{lem:path1},
  \[
  |G'_{\varsigma}(u)|^2|h_k(u)|^2\leq
  \frac{4\varsigma^2}{(\varsigma+2)!^2}\|h_k\|^2_{W^{\varsigma+2,\infty}(-1,1)}
  G_\varsigma(u)\,.
  \]
  Hence, exploiting \eqref{est_Gs} we deduce that 
  \[
  [M_1](T)\leq \frac{4\varsigma^2}{(\varsigma+2)!^2} C_{\H,\varsigma}^2
  \int_0^t\norm{G_\varsigma(u(s))}_{L^1(\OO)}\,\d s
  \leq \frac{4\varsigma^2}{(\varsigma+2)!^2} C_{\H,\varsigma}^2T
  \left(K_1 + \sup_{t\in[0,T]}M_1(t)\right)\,,
  \]
  so that setting 
  \beq\label{K1'}
  K_1':=\frac{4\varsigma^2}{(\varsigma+2)!^2} C_{\H,\varsigma}^2TK_1\,,
  \qquad
  K_1'':=\frac{4\varsigma^2}{(\varsigma+2)!^2} C_{\H,\varsigma}^2T\,,
  \eeq
  we have that 
  \[
  [M_1](T)\leq K_1' + K_1''\sup_{t\in[0,T]}M_1(t) \qquad\P\text{-a.s.}
  \]
  Bernstein's inequality as in Lemma~\ref{lem:bern} yields then
  \begin{align*}
     \P\left\{\sup_{t\in[0,T]}M_1 \geq l\right\} 
     &=\P\left\{\sup_{t\in[0,T]}M_1 \geq l\,,\quad
     [M_1](T)\leq K_1' + K_1''\sup_{t\in[0,T]}M_1(t)\right\} \\
     &\leq\exp\left(-\frac{l^2}{K_1'l + K_1''}\right)\,,
  \end{align*}
  as desired. Let us turn now to $M_2$:
  exploiting \eqref{est_Vp} of lemma~\ref{lem:path2} we have 
  \begin{align*}
  [M_2](T)&=\int_0^T\sum_{k=0}^\infty \int_\OO |\Delta_pu(s)|^2|h_k(u(s))|^2\,\d s\\
  &\leq C_\H^2\int_Q|\Delta_pu|^2\leq
  C_\H^2\left(K_2 + K_2\sup_{t\in[0,T]}M_2(t)\right)\,.
  \end{align*}
  The conclusion follows then analogously from Bernstein's inequality 
  with the choices
  \beq\label{K2'}
  K_2':=K_2'':=C_\H^2 K_2\,. 
  \eeq
\end{proof}

We are now ready to conclude the proof of Theorem~\ref{thm3}.
By Lemma~\ref{lem:prob_sep}, for every $\delta\in(0,\delta_0)$
and $\alpha\in (d/\varsigma, 1-d/p)$ we have
\[
  \P\{\Lambda\leq\delta\}\leq\P
  \left\{\sup_{t\in[0,T]}\norm{G_\varsigma(u(t))}_{L^1(\OO)}
  \cdot
  \sup_{t\in[0,T]}\norm{u(t)}_{V_p}^{d/\alpha}
  \geq \frac{K}{\delta^{\varsigma-\frac{d}{\alpha}}} \right\} \,.
\]
Now, the Young's inequality implies for every $\eta\in(1,+\infty)$ that 
\[
  \sup_{t\in[0,T]}\norm{G_\varsigma(u(t))}_{L^1(\OO)}
  \cdot
  \sup_{t\in[0,T]}\norm{u(t)}_{V_p}^{d/\alpha}\leq
  \frac1\eta\sup_{t\in[0,T]}\norm{G_\varsigma(u(t))}_{L^1(\OO)}^\eta
  +\frac{\eta-1}{\eta}\sup_{t\in[0,T]}\norm{u(t)}_{V_p}^{\frac{\eta}{\eta-1}\frac{d}\alpha}\,,
\]
from which we obtain that 
\begin{align*}
  \P\{\Lambda\leq\delta\}&\leq
  \P\left\{\frac1\eta\sup_{t\in[0,T]}
  \norm{G_\varsigma(u(t))}_{L^1(\OO)}^{\eta}
  +\frac{\eta-1}{\eta}\sup_{t\in[0,T]}\norm{u(t)}_{V_p}^{\frac{\eta}{\eta-1}\frac{d}\alpha}
  \geq \frac{K}{\delta^{\varsigma-\frac{d}{\alpha}}} \right\}\\
  &\leq
  \P\left\{\sup_{t\in[0,T]}\norm{G_\varsigma(u(t))}_{L^1(\OO)}^\eta
  \geq \frac{\eta K}{2\delta^{\varsigma-\frac{d}{\alpha}}} \right\}
  +\P\left\{\sup_{t\in[0,T]}\norm{u(t)}_{V_p}^{\frac{\eta}{\eta-1}\frac{d}\alpha}
  \geq \frac{\eta K}{2(\eta-1)\delta^{\varsigma-\frac{d}{\alpha}}} \right\}\\
  &=
  \P\left\{\sup_{t\in[0,T]}\norm{G_\varsigma(u(t))}_{L^1(\OO)}
  \geq \left(\frac{\eta K}{2}\right)^{1/\eta}
  \frac1{\delta^{\frac1\eta(\varsigma-\frac{d}{\alpha})}} \right\}\\
  &\qquad+\P\left\{\sup_{t\in[0,T]}\norm{u(t)}_{V_p}^{p}
  \geq \left(\frac{\eta K}{2(\eta-1)}\right)^{\frac{\eta-1}{\eta}\frac{p\alpha}{d}}
  \frac1{\delta^{\frac{\eta-1}\eta \frac{p\alpha}{d}(\varsigma-\frac{d}\alpha)}} \right\}\,.
\end{align*}
In order to optimise the rate of convergence, we choose now $\eta\in(1,+\infty)$
so that both contributions on the right-hand side yield the same 
order, namely
\[
  \frac1\eta\left(\varsigma-\frac{d}{\alpha}\right) = 
  \frac{\eta-1}\eta \frac{p\alpha}{d}\left(\varsigma-\frac{d}\alpha\right)\,.
\]
An easy computation shows that we obtain 
\[
  \bar\eta:=1+\frac{d}{p\alpha}=\frac{p\alpha + d}{p\alpha}\,.
\]
The corresponding exponent of $\delta$ on the right-hand side is given by
\[
  \rho:=\frac1{\bar\eta}\left(\varsigma-\frac{d}{\alpha}\right)
  =p\frac{\varsigma - d/\alpha}{p + d/\alpha}>0\,,
\]
and substituting in the estimate above yields 
\begin{align*}
  \P\{\Lambda\leq\delta\}&\leq
  \P\left\{\sup_{t\in[0,T]}\norm{G_\varsigma(u(t))}_{L^1(\OO)}
  \geq\left[\frac{K(1+\frac{d}{p\alpha})}{2}\right]^{\frac{p\alpha}{p\alpha+d}}\delta^{-\rho}\right\}\\
  &+\P\left\{\sup_{t\in[0,T]}\norm{u(t)}_{V_p}^{p}
  \geq \left[\frac{K(1+\frac{p\alpha}{d})}{2}\right]^{\frac{p\alpha}{p\alpha+d}}
  \delta^{-\rho} \right\}\,.
\end{align*}
Setting now for brevity  
\beq\label{L12}
  L_1:=\left[\frac{K(1+\frac{d}{p\alpha})}{2}\right]^{\frac{p\alpha}{p\alpha+d}}>0\,, \qquad
  L_2:=\left[\frac{K(1+\frac{p\alpha}{d})}{2}\right]^{\frac{p\alpha}{p\alpha+d}}>0\,,
\eeq
Lemmas~\ref{lem:path1}--\ref{lem:path2} imply that 
\[
   \P\{\Lambda\leq\delta\}\leq
   \P\left\{\sup_{t\in[0,T]}M_1(t) \geq 
   L_1\delta^{-\rho}- K_1\right\} + 
   \P\left\{\sup_{t\in[0,T]}M_2(t) \geq 
   \frac{L_2}{K_2}\delta^{-\rho} - 1\right\}\,.
\]
We are only left to exploit the estimates of Lemma~\ref{lem:est_M}.
To this end, we shall restrict to small values of $\delta$ so that 
\[
  L_1\delta^{-\rho}- K_1> 
  \frac{L_1}2\delta^{-\rho} \qquad\text{and}\qquad
  \frac{L_2}{K_2}\delta^{-\rho} - 1>
  \frac{L_2}{2K_2}\delta^{-\rho}\,,
\]
namely
\[
  0<\delta < \min\left\{\delta_0, \frac12, \left(\frac{L_1}{2K_1}\right)^{1/\rho},
  \left(\frac{L_2}{2K_2}\right)^{1/\rho}\right\}\,.
\]
For every such $\delta$, using Lemma~\ref{lem:est_M} we infer that 
\begin{align*}
  \P\{\Lambda\leq\delta\}&\leq
  \P\left\{\sup_{t\in[0,T]}M_1(t) \geq 
  \frac{L_1}2\delta^{-\rho}\right\} + 
   \P\left\{\sup_{t\in[0,T]}M_2(t) \geq 
   \frac{L_2}{2K_2}\delta^{-\rho}\right\}\\
   &\leq
   \exp\left(
   -\frac{\frac14L_1^2\delta^{-2\rho}}{\frac12L_1K_1'\delta^{-\rho} + K_1''}
   \right)
   +
   \exp\left(
   -\frac{\frac14L_2^2K_2^{-2}
   \delta^{-2\rho}}{\frac12L_2K_2^{-1}K_2'\delta^{-\rho} + K_2''}
   \right)\,.
\end{align*}
Now, in order to get a clearer estimate, we can further restrict
the values of $\delta$ so that 
\[
   K_1''\leq \frac12L_1K_1'\delta^{-\rho} \qquad\text{and}\qquad
   K_2''\leq \frac12L_2K_2^{-1}K_2'\delta^{-\rho}\,,
\]
which yields after some easy computations, recalling that $K_1'/K_1''=K_1$
and $K_2'/K_2''=1$, 
\[
  0<\delta<\min\left\{\delta_0, \frac12,
  \left(\frac{L_1}{2K_1}\right)^{1/\rho},
  \left(\frac{L_1K_1}{2}\right)^{1/\rho},
  \left(\frac{L_2}{2K_2}\right)^{1/\rho}\right\}\,.
\]
For every such $\delta$ we obtain then
\[
  \P\{\Lambda\leq\delta\}\leq
  \exp\left(
   -\frac{L_1}{4K_1'}\delta^{-\rho}
   \right)
   +\exp\left(
   -\frac{L_2}{4K_2K_2'}\delta^{-\rho}
   \right)\,.
\]
Setting then 
\[
  L:=\frac12\min\left\{\frac{L_1}{4K_1'}, \frac{L_2}{4K_2K_2'} \right\}\,,
\]
we get exactly 
\[
\P\{\Lambda\leq\delta\}\leq 2\exp\left(-2L\delta^{-\rho}\right)\,.
\]
It is now clear that further adapting the range of $\delta$, namely 
\beq
  \label{delta_small}
  0<\delta<\delta_*:=\min\left\{\delta_0, \frac12,
  \left(\frac{L_1}{2K_1}\right)^{1/\rho},
  \left(\frac{L_1K_1}{2}\right)^{1/\rho},
  \left(\frac{L_2}{2K_2}\right)^{1/\rho}, \left(\frac{L}{\ln2}\right)^{1/\rho}\right\}\,,
\eeq
one gets $2\exp(-L\delta^{-\rho})\leq1$ for all $\delta\in(0,\delta_*)$, hence
\[
  \P\{\Lambda\leq\delta\}\leq \exp\left(-L\delta^{-\rho}\right) \qquad\forall\delta\in(0,\delta_*)\,.
\]
This concludes the proof of Theorem~\ref{thm3}.

\section{Convergence of the separation layer}
\label{sec:proof4}
In this section we prove Theorem~\ref{thm4} about the convergence of 
the random separation layers towards the deterministic one as the noise switches off.

For every $\eps\in(0,1)$ we denote by 
$u_\eps$ the unique solution of the system \eqref{1_eps}--\eqref{3_eps}
in the sense of Theorem~\ref{thm1+}, 
and by $\Lambda_\eps:\Omega\to(0,\delta_0]$ its respective 
threshold of separation from the barriers $\pm1$
in the sense of Theorem~\ref{thm2} and definition \eqref{def:delta}.
Moreover, let $\bar u$ be the unique solution to the deterministic system 
\eqref{1_eps}--\eqref{3_eps} when $\eps=0$, namely 
\begin{align*}
  \partial_t \bar u - \Delta_p \bar u + F'(\bar u) = 0 \qquad&\text{in } (0,T)\times\OO\,,\\
  \bar u=0 \qquad&\text{in } (0,T)\times\partial\OO\,,\\
  \bar u(0)=u_0 \qquad&\text{in } \OO\,.
\end{align*}
The existence and uniqueness of a strong solution $\bar u$ 
satisfying \eqref{bar1}--\eqref{bar2}
is well-known in the deterministic setting (see for example \cite{sch-seg-stef})
and can be obtained here path-by-path. Moreover, one can test the equation
by $(\bar u -(1-\delta_0))_+$ to infer that 
\begin{align*}
  \frac12\int_\OO(\bar u(t) -(1-\delta_0))_+^2
  &+\int_{Q_t\cap\{\bar u>1-\delta_0\}}|\nabla\bar u|^p
  +\int_{Q_t\cap\{\bar u>1-\delta_0\}}F'(\bar u)(\bar u -(1-\delta_0))\\
  &=\frac12\int_\OO(u_0 -(1-\delta_0))_+^2 = 0 \qquad\forall\,t\in[0,T]\,.
\end{align*}
Under the assumption that $\max\{|r_F|, |R_F|\}\leq1-\delta_0$, one has 
in particular that $F'\geq0$ on $(1-\delta_0,1)$. It follows that 
$(\bar u -(1-\delta_0))_+=0$ almost everywhere, i.e.~(exploiting the space-time
continuity of $\bar u$) that 
\[
  \bar u(\omega,t,x)\leq 1-\delta_0 \qquad\forall\,(t,x)\in[0,T]\times\overline\OO\,,
  \quad\P\text{-a.e.~}\omega\in\Omega\,.
\]
Analogously, testing by $-(\bar u+1-\delta_0)_-$, the same argument yields that 
\[
  \bar u(\omega,t,x)\geq -1+\delta_0 \qquad\forall\,(t,x)\in[0,T]\times\overline\OO\,,
  \quad\P\text{-a.e.~}\omega\in\Omega\,.
\]
Since $\|u_0\|_{L^\infty(\OO)}=1-\delta_0$, this readily implies that \eqref{bar3} holds too.

Let us focus now of the proof of the convergence of $(\Lambda_\eps)_{\eps\in(0,1)}$
to the constant deterministic threshold $\delta_0$. To this end, 
recalling the proofs of Lemmas~\ref{lem:path1}--\ref{lem:path2} and 
exploiting the fact that $\eps\in(0,1)$, it is not difficult to check that 
the constants $K_1, K_2>0$ given by \eqref{K1} and \eqref{K2}
are independent of $\eps$ and satisfy 
\begin{align}
  \label{est_Gs_eps}
  \sup_{t\in[0,T]}\int_\OO G_\varsigma(u_\eps(t))
  &\leq K_1 +  \sup_{t\in[0,T]}M_{1,\eps}(t)
  \qquad\P\text{-a.s.}\qquad\forall\,\eps\in(0,1)\,,\\
  \label{est_Vp_eps}
  \sup_{t\in[0,T]}\norm{u_\eps(t)}_{V_p}^p
  +\int_Q|\Delta_p u_\eps|^2
  &\leq K_2 + K_2 \sup_{t\in[0,T]}M_{2,\eps}(t)
  \qquad\P\text{-a.s.}\qquad\forall\,\eps\in(0,1)\,,
\end{align}
where the real-valued martingales $M_{1,\eps}$ and $M_{2,\eps}$ are (well-) defined as
\begin{align} 
  \label{M1_eps}
  M_{1,\eps}&:=\sqrt\eps
  \int_0^\cdot
  \left(G'_\varsigma(u_\eps(s)), \H(u_\eps(s))\,\d W(s)\right)_H\,, \qquad\eps\in(0,1)\,,\\
  \label{M2_eps}
  M_{2,\eps}&:=\sqrt\eps
  \int_0^\cdot
  \left(-\Delta_pu_\eps(s), \H(u_\eps(s))\,\d W(s)\right)_H\,, \qquad\eps\in(0,1)\,.
\end{align}
Analogously, it is immediate to check from the proof of Lemma~\ref{lem:prob_sep}
that for every $\alpha\in (d/\varsigma, 1-d/p)$
the constant $K$ defined in \eqref{K} is independent of $\eps$ and satisfies 
\beq
  \label{est_prob_sep_eps}
  \P\{\Lambda_\eps\leq\delta\}\leq\P
  \left\{\sup_{t\in[0,T]}\norm{G_\varsigma(u_\eps(t))}_{L^1(\OO)}
  \cdot
  \sup_{t\in[0,T]}\norm{u_\eps(t)}_{V_p}^{d/\alpha}
  \geq \frac{K}{\delta^{\varsigma-\frac{d}{\alpha}}} \right\} 
  \qquad\forall\,\delta\in(0,\delta_0]\,. 
\eeq
Similarly, going back to the proof of Lemma~\ref{lem:est_M}
we readily see that the constants $K_1', K_1'', K_2', K_2''$ defined in \eqref{K1'}
and \eqref{K2'}
are independent of $\eps$ and satisfy 
\[
[M_{i,\eps}](T)\leq K_i'\eps + K_i''\eps\sup_{t\in[0,T]}M_{i,\eps}(t) \quad\P\text{-a.s.}\,,
\qquad i=1,2\,.
\]
Now, for a technical reason that will be clear below, we define
\[
  \tilde K_1'':=\max\{K_1'', K_1'K_1+1\}>0\,, \qquad 
  \tilde K_2'':=\max\{K_2'', K_2'+1\}>0\,,
\]
so that we still have 
\[
[M_{i,\eps}](T)\leq K_i'\eps + \tilde K_i''\eps\sup_{t\in[0,T]}M_{i,\eps}(t) \quad\P\text{-a.s.}\,,
\qquad i=1,2\,.
\]
Consequently, exploiting again the Bernstein inequality
as in the proof of Lemma~\ref{lem:est_M} we obtain 
\beq\label{bern_eps}
  \P\left\{\sup_{t\in[0,T]}M_{i,\eps} \geq l\right\} \leq \exp\left(-\frac1\eps
  \frac{l^2}{K_i'l + \tilde K_i''}\right)\,,\qquad i=1,2\,.
\eeq

Now, let $\delta\in(0,\delta_0)$ and $\alpha\in (d/\varsigma, 1-d/p)$
be arbitrary. Taking into account the relations 
\eqref{est_Gs_eps}--\eqref{est_Vp_eps} and \eqref{est_prob_sep_eps}
and proceeding as at the end of Section~\ref{sec:proof3},
we obtain that 
\[
   \P\{\Lambda_\eps\leq\delta\}\leq
   \P\left\{\sup_{t\in[0,T]}M_{1,\eps}(t) \geq 
   L_1\delta^{-\rho}- K_1\right\} + 
   \P\left\{\sup_{t\in[0,T]}M_{2,\eps}(t) \geq 
   \frac{L_2}{K_2}\delta^{-\rho} - 1\right\}\,,
\]
where the constants $L_1, L_2$ are defined in \eqref{L12} and are independent of $\eps$.
Exploiting then the estimate \eqref{bern_eps} we get 
\[
  \P\{\Lambda_\eps\leq\delta\}\leq
  \exp\left(-\frac1\eps
  \frac{(L_1\delta^{-\rho}- K_1)^2}{K_1'(L_1\delta^{-\rho}- K_1) + \tilde K_1''}\right)+
  \exp\left(-\frac1\eps
  \frac{(\frac{L_2}{K_2}\delta^{-\rho} - 1)^2}{K_2'(\frac{L_2}{K_2}\delta^{-\rho} - 1) + 
  \tilde K_2''}\right)\,.
\]
Now, by the updated definition of $\tilde K_1''$ and $\tilde K_2''$ above, one has that 
\[
  K_1'(L_1\delta^{-\rho}- K_1) + \tilde K_1'' >0\,, \qquad
  K_2'\left(\frac{L_2}{K_2}\delta^{-\rho} - 1\right) + \tilde K_2'' >0\,.
\]
Hence, we can define the function $N:(0,\delta_0)\to(0,+\infty)$ as
\[
  N(\delta):=\min\left\{
  \frac{(L_1\delta^{-\rho}- K_1)^2}{K_1'(L_1\delta^{-\rho}- K_1) + \tilde K_1''},
  \frac{(\frac{L_2}{K_2}\delta^{-\rho} - 1)^2}{K_2'(\frac{L_2}{K_2}\delta^{-\rho} - 1) + 
  \tilde K_2''}\right\}\,, \qquad \delta\in(0,\delta_0)\,,
\]
and infer that 
\[
  \P\{\Lambda_\eps\leq\delta\}\leq
  \exp\left(-\frac{N(\delta)}\eps\right) \quad\forall\,\delta\in(0,\delta_0)\,.
\]

At this point, we are ready to conclude. Indeed, 
given $\eta\in(0,\delta_0)$ we have 
\[
  \P\left\{|\Lambda_\eps-\delta_0|\geq\eta\right\}=
  \P\left\{\Lambda_\eps\leq\delta_0-\eta\right\}
  \leq\exp\left(-\frac{N(\delta_0-\eta)}\eps\right)\,,
\]
from which 
\[
  \limsup_{\eps\downarrow0}\eps\ln\P\left\{|\Lambda_\eps-\delta_0|\geq\eta\right\}
  \leq -N(\delta_0-\eta)\,,
\]
and this concludes the proof of Theorem~\ref{thm4}.

\section{Examples and applications}\label{sec:ex}
In this section we propose some examples for the potential $F$ and for the diffusion operator $\mathcal{H}$, and we also highlight a possible application of the model we studied.

Let us start by giving a relevant example for the potential $F$. We recall that $F:(-1,1)\to[0,+\infty)$ is assumed to be of class $C^2$, with $F'(0)=0$, such that 
\begin{equation*}
\lim_{r\to(-1)^+}F'(r)=-\infty\,, \qquad \lim_{r\to1^-}F'(r)=+\infty\,,
\end{equation*}
and
\begin{equation*}
\exists\, C_F\geq0:\qquad F''(r)\geq - C_F \quad\forall\,r\in(-1,1)\,.
\end{equation*}
An important example of potential that satisfies the required assumptions is the so-called logarithmic potential which is defined as
\begin{equation*}
F_{\log}(r):=\frac{\theta}{2}((1+r)\ln(1+r)+(1-r)\ln(1-r))+\frac{\theta_0}{2}(1-r^2)\,, \qquad r \in (-1,1) \,,
\end{equation*}
with $0 < \theta < \theta_0$ being given constants. Such potential possesses two global minima in the interior of the physically relevant domain $[-1, 1]$, and it is the most coherent in terms of thermodynamical consistency. For these reasons, it is usually employed in contexts related to separation phenomena in physics. As already mentioned, the potential $F_{log}$ satisfies the assumptions that we required. Indeed, it holds
\begin{equation*}
F'_{\log} = \frac{\theta}{2} \log \left(\frac{1+r}{1-r}\right) - \theta_0 r\,, \qquad F''_{\log}(r) = \frac{\theta}{1-r^2}-\theta_0\,, \qquad r \in (-1,1) \,,
\end{equation*}
and therefore the hypothesis are trivially fulfilled, with $C_F = \theta_0-\theta$.

Concerning the diffusion operator $\H:\mathcal B_1\to \cL^2(U,H)$, we recall that it is defined as
\begin{equation*}
\H(v):e_k\mapsto h_k(v)\,, \quad v\in\mathcal B_1\,,\quad k\in\enne\,,
\end{equation*}
where
\begin{equation*}
(h_k)_{k\in\enne}\subset W_0^{1,\infty}(-1,1)\,, \qquad F''h_k^2\in L^\infty(-1,1) \qquad\forall\,k\in\enne \,,
\end{equation*}
and 
\begin{equation*}
C_{\mathcal H}^2:=\sum_{k=0}^\infty \left(\norm{h_k}_{W^{1,\infty}(-1,1)}^2 +\norm{F''h_k^2}_{L^\infty(-1,1)}\right) <+\infty\,.
\end{equation*}
A sequence of functions $(h_k)_{k \in \mathbb{N}}$ satisfying the assumptions that we have just mentioned, with $F = F_{\log}$, is the following
\begin{equation}\label{hk_example}
h_k(r) : = \frac{1}{k+1} (1-r^2)^2, \qquad r \in (-1,1) \,, \qquad \forall k \in \mathbb{N}\,.
\end{equation}
Indeed, there exists a constant $\mathfrak{h} > 0$ such that, for every $k \in \mathbb{N}$, it holds
\begin{equation*}
h_k (\pm 1) = h'_k(\pm 1) = 0, \qquad \|h_k\|_{L^{\infty}(-1,1)} = \frac{1}{k+1} < \infty, \qquad \|h'_k\|_{L^{\infty}(-1,1)} =  \frac{\mathfrak{h}}{k+1} < \infty \,, 
\end{equation*}
moreover, there is a constant $\mathfrak{f} > 0$ for which
\begin{equation*}
\|F''h_k^2\|_{L^{\infty}(-1,1)} = \frac{\mathfrak{f}}{(k+1)^2} < \infty \,,
\end{equation*}
and therefore
\begin{equation*}
C_{\mathcal H}^2= \sum_{k=0}^\infty \left(\frac{1}{k+1} + \frac{\mathfrak{h}}{k+1} \right)^2 + \frac{\mathfrak{f}}{(k+1)^2} < \infty \,.
\end{equation*}
However, we recall that Theorem \ref{thm2}, \ref{thm3} and \ref{thm4} require some additional conditions, i.e.\ the existence of $\varsigma\in\enne\setminus\{0\}$ such that $\varsigma(p-d)> pd$ and 
\begin{equation*}
(h_k)_{k\in\enne}\subset W_0^{\varsigma+2,\infty}(-1,1)\,, \qquad C_{\H,\varsigma}^2:=\sum_{k=0}^\infty\norm{h_k}^2_{W^{\varsigma+2,\infty}(-1,1)}<+\infty\,.
\end{equation*}
The sequence of functions defined in \eqref{hk_example} does not satisfy such assumptions, but a trivial modification of it allows to fulfil also these additional requirements. At this purpose, we consider the sequence of functions $(h_k)_{k \in \mathbb{N}}$ defined as
\begin{equation*}
h_k(r) : = \frac{1}{k+1} (1-r^2)^{\varsigma + 3}, \qquad r \in (-1,1) \,, \qquad \forall k \in \mathbb{N}\,,
\end{equation*}
which trivially satisfies the previous assumptions on the sequence $(h_k)_{k \in \mathbb{N}}$, with $F = F_{\log}$. For every $n = 0, \dots, \varsigma+2$, one can easily verify that there exists a constant $\mathfrak{h}_n > 0$ such that
\begin{equation*}
h^{(n)}_{k}(\pm 1) = 0\,, \qquad \|h_k^{(n)}\|_{L^{\infty}(-1,1)} = \frac{\mathfrak{h}_n}{k+1} <\infty \,,
\end{equation*}
and therefore
\begin{equation*}
C_{\H,\varsigma}^2 = \sum_{k = 0}^{\infty} \left(\sum_{n = 0}^{\varsigma+2} \frac{\mathfrak{h}_n}{k+1} \right)^2 < \infty \,. 
\end{equation*}

Finally, let us observe that an interesting application of our problem arises in the one-dimensional case. Indeed, under the setting of Theorems \ref{thm2}, \ref{thm3} and \ref{thm4}, we shall require that $p > d$. Therefore if $d = 1$, we can choose $p = 2$, retrieving the classical Allen-Cahn equation. We observe that, in this case, we should require $\varsigma \geq 3$, which means that the functions $(h_k)_{k \in \mathbb{N}}$ should belong at least to $W_0^{5, \infty}(-1,1)$.


\section*{Acknowledgement}
We are especially grateful to Prof.~Ulisse Stefanelli for the insightful 
discussions about separation properties for deterministic 
doubly nonlinear evolution equations.


\bibliography{ref}{}
\bibliographystyle{abbrv}

\end{document}